\documentclass[11pt]{article}

\usepackage{natbib}
\usepackage{fullpage, parskip}
\usepackage{graphicx}
\usepackage{multirow}
\usepackage{amsmath,amssymb,amsfonts}
\usepackage{amsthm}
\usepackage{mathrsfs}
\usepackage{mathtools}
\usepackage[title]{appendix}
\usepackage{xcolor}

\usepackage{hyperref}
\definecolor{mydarkblue}{rgb}{0,0.08,0.45}
\definecolor{mydarkred}{rgb}{0.75, 0.27, 0.34}
\hypersetup{
    colorlinks=true,
    citecolor=mydarkblue,
    linkcolor=mydarkred,
}

\usepackage{manyfoot}
\usepackage{booktabs}
\usepackage{etoolbox}
\usepackage{url}
\usepackage{xspace}
\usepackage{subcaption, multicol, multirow, vwcol}
\usepackage[ruled,lined,linesnumbered,noend]{algorithm2e}
\usepackage{thm-restate}
\usepackage{pgfplots,tikz}
\pgfplotsset{compat=1.15}

\usepackage[capitalise,noabbrev]{cleveref}

\crefname{section}{Section}{Sections}
\crefname{subsection}{Section}{Sections}
\crefname{subsubsection}{Section}{Sections}
\crefname{algocf}{Algorithm}{Algorithms}
\crefname{equation}{Equation}{Equations}
\crefname{assumption}{Assumption}{Assumptions}
\crefname{observation}{Observation}{Observations}
\crefname{claim}{Claim}{Claims}

\newtheorem{theorem}{Theorem}
\newtheorem{lemma}{Lemma}
\newtheorem{corollary}{Corollary}

\newtheorem{proposition}{Proposition}

\theoremstyle{definition}
\newtheorem{definition}{Definition}
\newtheorem{assumption}{Assumption}

\crefformat{theorem}{\textcolor{black}{Theorem}~#2#1#3}
\Crefformat{theorem}{\textcolor{black}{Theorem}~#2#1#3}
\crefformat{lemma}{\textcolor{black}{Lemma}~#2#1#3}
\Crefformat{lemma}{\textcolor{black}{Lemma}~#2#1#3}
\crefformat{corollary}{\textcolor{black}{Corollary}~#2#1#3}
\Crefformat{corollary}{\textcolor{black}{Corollary}~#2#1#3}
\crefformat{proposition}{\textcolor{black}{Proposition}~#2#1#3}
\Crefformat{proposition}{\textcolor{black}{Proposition}~#2#1#3}
\crefformat{definition}{\textcolor{black}{Definition}~#2#1#3}
\Crefformat{definition}{\textcolor{black}{Definition}~#2#1#3}
\crefformat{assumption}{\textcolor{black}{Assumption}~#2#1#3}
\Crefformat{assumption}{\textcolor{black}{Assumption}~#2#1#3}
\crefformat{observation}{\textcolor{black}{Observation}~#2#1#3}
\Crefformat{observation}{\textcolor{black}{Observation}~#2#1#3}
\crefformat{claim}{\textcolor{black}{Claim}~#2#1#3}
\Crefformat{claim}{\textcolor{black}{Claim}~#2#1#3}
\crefformat{remark}{\textcolor{black}{Remark}~#2#1#3}
\Crefformat{remark}{\textcolor{black}{Remark}~#2#1#3}
\crefformat{example}{\textcolor{black}{Example}~#2#1#3}
\Crefformat{example}{\textcolor{black}{Example}~#2#1#3}
\crefformat{section}{\textcolor{black}{Section}~#2#1#3}
\Crefformat{section}{\textcolor{black}{Section}~#2#1#3}
\crefformat{subsection}{\textcolor{black}{Section}~#2#1#3}
\Crefformat{subsection}{\textcolor{black}{Section}~#2#1#3}
\crefformat{subsubsection}{\textcolor{black}{Section}~#2#1#3}
\Crefformat{subsubsection}{\textcolor{black}{Section}~#2#1#3}
\crefformat{appendix}{\textcolor{black}{Appendix}~#2#1#3}
\Crefformat{appendix}{\textcolor{black}{Appendix}~#2#1#3}
\crefformat{figure}{\textcolor{black}{Figure}~#2#1#3}
\Crefformat{figure}{\textcolor{black}{Figure}~#2#1#3}
\crefformat{table}{\textcolor{black}{Table}~#2#1#3}
\Crefformat{table}{\textcolor{black}{Table}~#2#1#3}
\crefformat{algocf}{\textcolor{black}{Algorithm}~#2#1#3}
\Crefformat{algocf}{\textcolor{black}{Algorithm}~#2#1#3}
\crefformat{equation}{\textcolor{black}{Equation}~#2#1#3}
\Crefformat{equation}{\textcolor{black}{Equation}~#2#1#3}

\allowdisplaybreaks

\newcommand{\poly}{\mathrm{poly}}

\newcommand{\ignore}[1]{}

\newcommand{\E}{\mathbb{E}}

\newcommand{\conv}{\mathrm{conv}}

\newcommand{\simplex}{\mathbf{\Delta}}

\newcommand{\R}{\mathbb{R}}

\newcommand{\convexbody}{\mathcal{K}}
\newcommand{\regret}{\textup{\texttt{regret}}}
\newcommand{\ent}{\mathrm{ent}}
\newcommand{\euc}{\mathrm{euc}}
\newcommand{\var}[2]{#1^{(#2)}}
\newcommand{\gain}{\textup{\texttt{gain}}}

\usepackage{glossaries}

\newacronym{oco}{OCO}{Online Convex Optimization}
\newacronym{omd}{OMD}{Online Mirror Descent}
\newacronym{opgd}{OPGD}{Online Projected Gradient Descent}
\newacronym{oeg}{OEG}{Online Exponentiated Gradient}

\title{Block-Norm Geometries for Online Mirror Descent \\ with Sparse Losses}

\author{
    Swati Gupta \\ MIT \and
    Jai Moondra\footnote{Corresponding author, email: \texttt{jmoondra@andrew.cmu.edu}}  \footnote{Part of this work was done while the author was at Georgia Tech} \\ CMU \and
    Mohit Singh \\ Georgia Tech
}
\date{}

\begin{document}

\maketitle

\begin{abstract}
    The performance of online mirror descent depends critically on the geometry induced by its mirror map, yet standard algorithms largely rely on two canonical choices: Euclidean and entropic geometry.
    We show that these two geometries can both be substantially suboptimal when loss gradients are sparse.
    We introduce a family of randomized block-norm mirror maps that interpolates between Euclidean and entropic geometries and adapts to intermediate sparsity structure.
    For several standard convex sets, including \(\ell_p\) balls, ellipsoids, boxes, and Minkowski sums of norm balls, we prove polynomial-in-dimension improvements in regret bounds over the better of online projected gradient descent and exponentiated gradient.
    We further construct explicit online convex optimization instances for which these improvements are realized: on a simple polytope, an intermediate block geometry achieves a $\poly(d)$ separation in regret from both Euclidean and entropic geometries in dimension $d$, while on the probability simplex we obtain a separation of order \(\Omega(\sqrt{\log d}/\log\log d)\).
    Finally, we study geometry selection when sparsity is unknown.
    We show that naïvely alternating between mirror maps can incur linear regret, even though either mirror map alone has sublinear regret, and give a Hedge meta-algorithm that competes with the best mirror map in a finite portfolio.
    For random block geometries, this yields regret within an \(O(\sqrt{\log\log d})\) factor of the best random uniform block norm chosen in hindsight.

\end{abstract}

\section{Introduction}

Choosing the right geometry is a central design question in optimization.
For example, the same optimization problem can behave very differently depending on the norm, distance-generating function, or coordinate system used by an algorithm.
In mirror descent algorithms, choosing a mirror map that is well matched to the geometry of the feasible region may make projections and iterations efficient, while one that is well matched to the objective or gradient structure may yield substantially sharper convergence or regret guarantees.
Classical examples already suggest that there is no universally best choice: Euclidean geometry is natural for some domains, while entropic geometry is far better suited to others
\cite{kivinen1997exponentiated, littlestone1994weighted, cesa1997use, hazan_introduction_2016, ben-tal_lectures_2001, srebro2011universality, gatmiry2025computing, cheng2019geometry}.
More broadly, this raises the question of whether geometry should be viewed not as a fixed modeling choice, but as an algorithmic degree of freedom that can be adapted to the structure of the problem.
In this paper, we study this question in \gls{oco}, where the appropriate geometry must simultaneously reflect the shape of the feasible set and the structure of the losses revealed over time.

\gls{oco} provides a broadly applicable setting in which to study this question.
At each time step \(t \in [T]\), a learner chooses a feasible point $\var{x}{t}$ in a convex feasible set $\convexbody \subseteq \R^d$, after which an adversary reveals a convex loss function $\var{f}{t}$.
The learner then uses the revealed loss function $\var{f}{t}$ to choose its next decision, with the goal of minimizing \emph{regret} relative to the best fixed feasible decision $x^* = {\arg\min}_{x \in \convexbody} \sum_{t \in [T]} \var{f}{t}(x)$ in hindsight:
\begin{equation}\label{eqn: regret-oco}
    \regret(T) = \sum_{t \in [T]} f^{(t)}(x^{(t)}) - \min_{x \in \mathcal{K}} \sum_{t \in [T]} f^{(t)}(x).
\end{equation}

The generality of this paradigm allows modeling various practical problems, including recommendation systems, online supervised learning, clinical trials, financial portfolios, adaptive routing, and stochastic optimization (see \cite{hazan_introduction_2016}).

\gls{omd} \cite{nemirovski1983problem, zinkevich_online_2003, shalev-shwartz_online_2012, hazan_introduction_2016} is a class of projection-based algorithms for \gls{oco} that generalizes \gls{opgd} by transforming the geometry of $\convexbody$ using a \emph{mirror map} $h: \convexbody \to \R$, which is a strictly convex and differentiable function.
\gls{omd} reduces to \gls{opgd} when $h_\euc(x) := \tfrac{1}{2} \|x\|_2^2$ is half the squared $L_2$ norm, corresponding to \emph{Euclidean geometry} \cite{zinkevich_online_2003}, and to \gls{oeg}, which uses $h_\ent(x) := \sum_{i} x_i \ln x_i$ that corresponds to the $L_1$ norm or the \emph{entropic} geometry \cite{littlestone1994weighted, cesa1997use}.

Zinkevich \cite{zinkevich_online_2003} proved that \gls{opgd} over time horizon $T$ has regret at most $O(D G \sqrt{T})$, where $D$ is the Euclidean diameter of feasible set $\convexbody$, and $G \ge \max_{t \in [T]} \sup_{x \in \convexbody} \|\nabla f^{(t)}(x)\|_2$ is an upper bound on the Euclidean norm of loss gradients.
The corresponding guarantee for \gls{omd} \cite{shalev-shwartz_online_2012, hazan_introduction_2016} depends on the geometry induced by a chosen mirror map: formally, the \emph{Bregman divergence} $B_h$ associated with the mirror map $h$ is defined as
\begin{equation}\label{eqn: bregman-divergence}
    B_h(x \| y) = h(x) - h(y) - \nabla h(y)^\top(x-y),
\end{equation}
which is the additive gap between $h(x)$ and the first-order approximation of $h(x)$ at $y$.
The OMD updates proceed as follows:
\begin{equation}\label{eqn: omd-update}
    x^{(t+1)} = {\arg \min}_{x \in \mathcal{K}} \big(\nabla f^{(t)}(x^{(t)})\big)^\top x + \tfrac{1}{\eta} B_{h}(x \|  x^{(t)}).
\end{equation}
For \emph{standard} step size $\eta := \tfrac{D_h}{G_h \sqrt{T}}$, the regret of \gls{omd} is upper bounded by $2 D_h G_h \sqrt{T}$, where $D_h := \max_{x \in \mathcal{K}}\sqrt{B_{h}(x \| x^{(1)})}$ is defined as the \emph{diameter} of $B_h$ for given starting point $x^{(1)} \in \convexbody$, the mirror map $h(\cdot)$ is 1-strongly convex with respect to the norm $\|\cdot\|$, and $G_h := \max_{t} \sup_{x \in \convexbody} \|\nabla f^{(t)}(x)\|^*$ is the Lipschitz constant of the loss functions under the dual norm $\|\cdot\|^*$.

This raises the natural question: \emph{given a convex feasible set $\convexbody$ and structural constraints on the loss functions, how do we characterize the optimal mirror map $h$?}

\paragraph{Sparse losses.}
Most previous work has focused on either (1) specific feasible sets such as the probability simplex, Euclidean ball, or $L_p$ norm balls etc \cite{cesa1997use, kwon2016gains, srebro2011universality}, or (2) convex loss gradients, i.e., loss functions whose gradients lie in some convex set $\mathcal{G} \subseteq \R^d$.
In this work, we study loss functions with sparse gradients, where at most $s$ coordinates of the gradient vector of the losses are non-zero at any feasible point, for some $s \ll d$.
The set $\mathcal{G}_s$ of $s$-sparse vectors is \emph{non-convex} and has only been studied for special convex bodies \cite{kwon2016gains}.
It arises naturally in many practical learning setups, motivating a systematic study:

\begin{itemize}
    \item In \emph{supervised Machine Learning}, most datasets consist of numerical as well as categorical (non-numerical) features.
    Most training algorithms (such as stochastic gradient descent) first convert each categorical feature into a binary feature vector using one-hot encoding.
    The length of this binary vector equals the number of categories, indicating the category for the given data point.
    The cumulative feature vector is therefore sparse for each data point, since all but one of the entries for each categorical feature are zero.

    For models that are based on linear/polynomial regression, logistic regression, and support vector methods etc, the corresponding loss function is of the form $\var{f}{t}(w) = \ell(w^\top \var{z}{t}, \var{y}{t})$ where $\var{z}{t} \in \R^d$ is a data point, $\var{y}{t} \in \R$ is the corresponding label, and $w \in \R^d$ is the parameter set to be learnt.
    For such models, sparsity of feature vectors $\var{z}{t}$ leads to sparse loss gradients, since $\tfrac{\partial \var{f}{t}}{\partial w_i} = \var{z}{t}_i \tfrac{\partial \ell}{\partial a}$, where $a = w^\top \var{z}{t}$.
    Therefore, $\tfrac{\partial \var{f}{t}}{\partial w_i} = 0$ when $\var{z}{t}_i = 0$.

    \item In \emph{combinatorial semi-bandit learning} setups, e.g., learning over shortest paths in a dense congestion network, or online matchings in a dense bipartite graph, the observed costs give rise to sparse gradient estimates \cite{audibert2014regret}.
    For example, in online-shortest paths, a decision-maker repeatedly picks a path from a source to a destination in a known graph, with edge costs (congestion) changing over time, and observes costs only on the chosen path.
    In dense networks, the inherent dimension (total number of edges) is much larger than the number of edges in any single path, so each resulting gradient estimate is supported on only a small subset of coordinates.

    \item  Sparse loss gradients can also arise in \emph{distributed learning} algorithms, where the loss gradient is distributed into several sparse sub-parts for parallelization \cite{wangni2018gradient}.
\end{itemize}

Formally, we analyze the following class of loss functions for \emph{oblivious} adversaries:
\begin{assumption}[Sparse gradients]\label{assmp: loss-functions}
    We assume that every loss function $f$ has bounded gradients supported in coordinates $C_f\subseteq[d]$ of size at most $s$, that is, for all $f$
    \begin{enumerate}
        \item for all $x \in \convexbody$, $\nabla f(x) \in [-1, 1]^d$, and
        \item there is some coordinate set $C_f \subseteq [d]$ with $|C_f| \le s$ such that $\nabla_i f(x) = 0$ for every $x \in \convexbody$ and every $i \not\in C_f$.
    \end{enumerate}
\end{assumption}

Note that as the loss functions $\var{f}{t}$ vary over time, the support coordinate set $\var{C_f}{t}$ also changes over time.
Additionally, the bounds $[-1, 1]^d$ may be replaced by $[-\sigma, \sigma]^d$ for any $\sigma > 0$ without loss of generality.

\begin{assumption}[Oblivious adversary]
    We assume that the loss functions are generated by an oblivious adversary so that the (possibly random) sequence of loss functions $\var{f}{1}, \dots, \var{f}{T}$ is fixed at $t = 0$ and does not depend on the player's moves.
\end{assumption}

\paragraph{Key questions.}
Determining the optimal mirror map for given convex body $\mathcal{K} \subseteq \mathbb{R}^d$ and sparse losses remains a challenging open question.
The core difficulty lies in optimizing the fundamental trade-off governing the regret of the OMD: $D_{h}(\mathcal{K}) G_h(\mathcal{G})$ for given $(\mathcal{K}, \mathcal{G})$, and jointly reasoning about these.

A canonical example is learning over the probability simplex: under the entropic geometry, the simplex has diameter $D_\ent = \Theta(\sqrt{\ln d})$, and $G_{\ent} = \max_{x \in \convexbody, f: \nabla f \in \mathcal{G},} \|\nabla f(x)\|_\infty$ under the dual $L_{\infty}$ norm.
On the other hand, under Euclidean geometry, the diameter decreases to $D_\euc = \Theta(1)$, but the Lipschitz constant of the gradients of the losses increases to $G_\euc = \max_{x \in \convexbody, f: \nabla f \in \mathcal{G}, } \|\nabla f(x)\|_2$, which can be a factor $\sqrt{s}$ larger than $G_\ent$ for $s$-sparse losses.
Therefore, for $s \gg \log d$, \gls{oeg} incurs lower regret than \gls{opgd}.
Kwon and Perchet \cite{kwon2016gains} showed that using mirror maps based on $L_p$ norms can improve the regret guarantee further if the parameter $p \in [1, 2]$ is tuned to sparsity $s$.

More generally, we would like to find a broad family or \emph{portfolio} $M$ of mirror maps, so that the performance under the optimal mirror map for any $(\mathcal{K}, \mathcal{G})$ is well-approximated by some $h \in M$.
This work focuses on a simpler question:
\begin{center}
    \it (a) Is there a family of mirror maps $M$ that can adapt to the sparsity of the loss functions better than the classical alternatives \gls{opgd} and \gls{oeg}?
    \\ (b) Can we find the optimal mirror map in this family online when sparsity $s$ is unknown?
\end{center}

The natural candidate family $M$ of convex combinations of the entropic mirror map $h_\ent(x) = \sum_i x_i \ln x_i$ and Euclidean mirror map $h_\euc(x) = (1/2) \|x\|_2^2$ is not useful: as we show in \cref{app: convex-combination-is-not-useful}, for losses with $s$-sparse gradients over the probability simplex $\simplex_d$, the regret upper bound \cref{eqn: regret-bound-general} for any $\lambda$ always exceeds the better of $h_\euc$ and $h_\ent$.
Instead, we consider a different family interpolating between these geometries:

\begin{figure}[!t]
    \centering
    \begin{minipage}[c]{0.58\linewidth}
        \includegraphics[width=\linewidth]{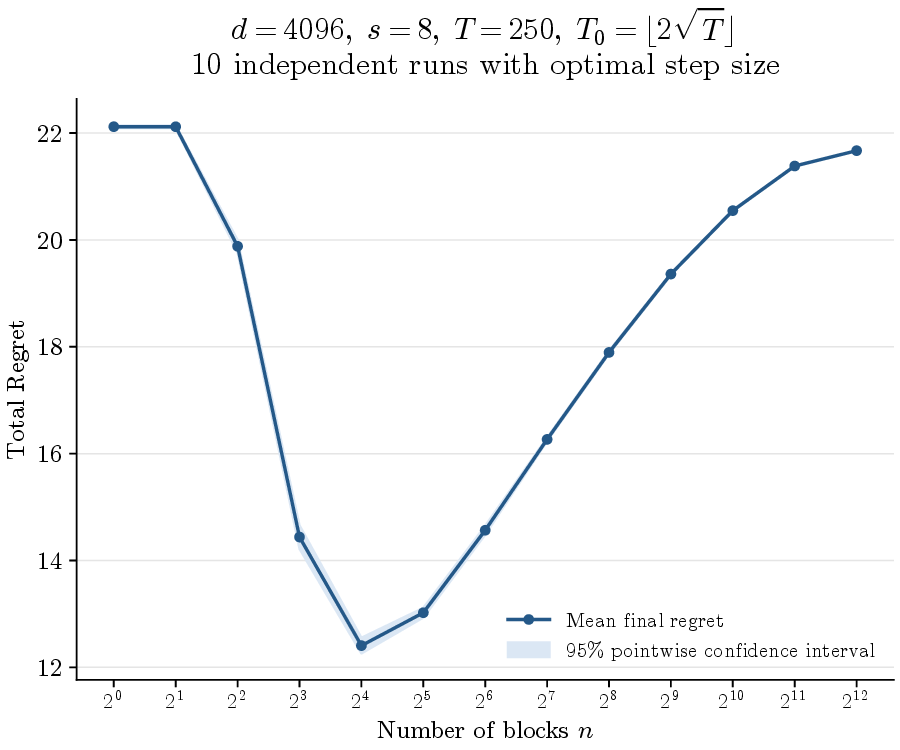}
    \end{minipage}
    \hfill
    \begin{minipage}[c]{0.40\linewidth}
        \caption{A numerical experiment to show the benefit in regret incurred using \gls{omd} setups with different block norms, in $d = 4096$ dimensions.
            The x-axis shows $13 = 1 + \log_{2} d$ random block norms, over the $d$-dimensional simplex.
            The y-axis shows the regret incurred at the end of $T = 250$ time steps, with minimum regret achieved at an intermediate geometry between Euclidean/\gls{opgd} ($1$ block; left end-point) and the entropic/\gls{oeg} geometry ($d$ blocks, right end-point).
            See \cref{sec: experiment-details} for details.}
        \label{fig: simplex-regret-vs-block-norms-rate-agnostic}
    \end{minipage}
\end{figure}

\paragraph{Randomized block norms.}
We make progress towards these questions through the family of \emph{randomized block norm} mirror maps, parametrized by integer $n \in \{1, 2, \dots, d\}$ that denotes the `number of blocks'.
Given $n$, the $d$ coordinates are partitioned into $n$ random blocks $B_1, \dots, B_n \subseteq [d]$ of near-equal size, chosen uniformly at random.
The corresponding \emph{block norm} $\|x\|_{[n]} := \sum_{j = 1}^n \|x_{B_j}\|_2$ is defined as the sum of the $L_2$ norms of the vectors $x_{B_j} \in \R^{B_j}$ restricted to individual blocks $j = 1, \dots, n$.

Ben-Tal and Nemirovski\textquotesingle s lecture notes \cite{ben-tal_nemirovski_lecture_notes_2025} describe block norms with fixed, \emph{non-random} blocks for \emph{offline} mirror descent, and construct corresponding mirror maps $h_n$ that are $1$-strongly convex with respect to the block norm inside the unit norm ball $B_{[n]} := \{x \in \R^d: \|x\|_{[n]} \le 1\}$; see \cref{sec: block-norms-preliminaries}.

For $n = 1$ block, there is no randomization, and this recovers the $L_2$ norm or the Euclidean geometry/\gls{opgd}.
For $n = d$, similarly, we recover the $L_1$ norm or the entropic geometry/\gls{oeg}\footnote{Technically, \gls{oeg} is only defined over the probability simplex $\simplex_d$ and not the entire $L_1$ unit norm ball $\{x: \|x\|_1 \le 1\}$. The case $n = d$ provides a proxy for \gls{oeg} on the simplex, with a matching regret upper bound up to absolute constants, and extends to the $L_1$ unit norm ball.
Throughout this work, we use `\gls{oeg}/entropic geometry' and `$n = d$' interchangeably; see \cref{sec: block-norms-preliminaries}.}.
Thus, randomized block norms interpolate between the two classical geometries and provide additional flexibility, and -- as we show next -- often achieve smaller regret than the two classical geometries.
\cref{fig: simplex-regret-vs-block-norms-rate-agnostic} illustrates this for sparse losses in a synthetic experiment over the probability simplex.

\subsection{Overview of results}\label{sec: results-and-techniques}

\begin{table}[t]
    \centering
    \caption{Factor improvements in regret upper bounds over the bounded $s$-sparse loss class, using OMD with the optimal random block norm, compared with the better of  Euclidean and entropic geometries.
    Each entry $g$ in the last column asserts $\gain(\convexbody,s)=\widetilde{\Omega}(g)$, with lower order terms in dimension $d$ omitted.
    For nonzero $w \in \R_{> 0}^d$, coordinates are ordered $w_1\ge\cdots\ge w_d$; $w_{\le s}$ contains the largest $s$ entries and $w_{>s}$ the remaining entries.
    General formulas are stated for $s \ge 3$.}
    \label{tab: gains}
    \setlength{\aboverulesep}{0pt} \setlength{\belowrulesep}{0pt} \setlength{\cmidrulewidth}{\arrayrulewidth}
    \begin{tabular}{|cc|c|c|}
        \hline
        \multicolumn{2}{|c|}{Convex body} &
        Sparsity $s$ &
        Gain using block norms \\
        \hline\hline

        \multicolumn{2}{|c|}{
            \multirow{2}{*}{
                \begin{tabular}[c]{@{}c@{}}
                    Probability simplex\\
                    $\simplex_d=\{x\ge0:\sum_i x_i=1\}$
                \end{tabular}
            }
        } &
        \rule[-1.5ex]{0pt}{5ex}Arbitrary &
        $\displaystyle
        \tfrac{\min\{\sqrt s,\sqrt{\ln d}\}}{\ln s}$ \\
        \cmidrule{3-4}
        \multicolumn{2}{|c|}{} &
        $\ln d$ &
        $\sqrt{\ln d}$ \\
        \hline

        \multicolumn{1}{|c|}{
            \multirow{2}{*}{
                \begin{tabular}[c]{@{}c@{}}
                    $L_p$ norm ball\\
                    $\{x:\sum_i |x_i|^p\le1\}$
                \end{tabular}
            }
        } &
        $p\in[1,2]$ &
        \rule[-1.5ex]{0pt}{5ex}Arbitrary &
        $\displaystyle
        \tfrac{\min\{\sqrt s,d^{1-1/p}\}}
        {s^{1-1/p}}$ \\
        \cmidrule{2-4}
        \multicolumn{1}{|c|}{} &
        $p=4/3$ &
        $\sqrt d$ &
        $d^{1/8}$ \\
        \hline

        \multicolumn{1}{|c|}{
            \multirow{2}{*}{
                \begin{tabular}[c]{@{}c@{}}
                    Axis-aligned ellipsoid\\
                    $\{x:\sum_i x_i^2/w_i^2\le1\}$
                \end{tabular}
            }
        } &
        $w\in\R_{>0}^d$ &
        \rule[-1.5ex]{0pt}{5ex}Arbitrary &
        $
        \tfrac{\min\{w_1\sqrt s,\|w\|_2\}}
        {\|w_{\le s}\|_2}$ \\
        \cmidrule{2-4}
        \multicolumn{1}{|c|}{} &
        \begin{tabular}[c]{@{}c@{}}
            $w_1\asymp d^{-1/4}$\\
            $w_j\asymp d^{-1/2}$, $j\ge2$
        \end{tabular} &
        $\sqrt d$ &
        $d^{1/4}$ \\
        \hline

        \multicolumn{1}{|c|}{
            \multirow{2}{*}{
                \begin{tabular}[c]{@{}c@{}}
                    Box\\
                    $\{x:|x_i|\le w_i\ \forall i\}$
                \end{tabular}
            }
        } &
        $w\in \R_{> 0}^d$ &
        \rule[-1.5ex]{0pt}{5ex}Arbitrary &
        $\displaystyle
        \tfrac{\min\{\|w\|_2\sqrt s,\|w\|_1\}}
        {\|w_{\le s}\|_1+\sqrt s\,\|w_{>s}\|_2}$ \\
        \cmidrule{2-4}
        \multicolumn{1}{|c|}{} &
        \begin{tabular}[c]{@{}c@{}}
            $w_1\asymp d^{-1/4}$\\
            $w_j\asymp d^{-1}$, $j\ge2$
        \end{tabular} &
        $\sqrt d$ &
        $d^{1/4}$ \\
        \hline

        \multicolumn{1}{|c|}{
            \multirow{2}{*}{
                \begin{tabular}[c]{@{}c@{}}
                    Sum of $L_1$ and $L_\infty$ balls\\
                    $\alpha_1B_1+\alpha_\infty B_\infty$
                \end{tabular}
            }
        } &
        $\alpha_1,\alpha_\infty>0$ &
        \rule[-1.5ex]{0pt}{5ex}Arbitrary &
        $\displaystyle
        \tfrac{\min\{(\alpha_1+\alpha_\infty\sqrt d)\sqrt s,\,
            \alpha_1+\alpha_\infty d\}}
        {\alpha_1+\alpha_\infty\sqrt{sd}}$ \\
        \cmidrule{2-4}
        \multicolumn{1}{|c|}{} &
        \begin{tabular}[c]{@{}c@{}}
            $\alpha_1\asymp d^{-1/4}$\\
            $\alpha_\infty\asymp d^{-1}$
        \end{tabular} &
        $\sqrt d$ &
        $d^{1/4}$ \\
        \hline

        \multicolumn{1}{|c|}{
            \multirow{2}{*}{
                \begin{tabular}[c]{@{}c@{}}
                    Sum of $L_1$ and $L_2$ balls\\
                    $\alpha_1B_1+\alpha_2B_2$
                \end{tabular}
            }
        } &
        $\alpha_1,\alpha_2>0$ &
        \rule[-1.5ex]{0pt}{5ex}Arbitrary &
        $\displaystyle
        \tfrac{\min\{(\alpha_1+\alpha_2)\sqrt s,\,
            \alpha_1+\alpha_2\sqrt d\}}
        {\alpha_1+\alpha_2\sqrt s}$ \\
        \cmidrule{2-4}
        \multicolumn{1}{|c|}{} &
        \begin{tabular}[c]{@{}c@{}}
            $\alpha_1\asymp d^{-1/4}$\\
            $\alpha_2\asymp d^{-1/2}$
        \end{tabular} &
        $\sqrt d$ &
        $d^{1/4}$ \\
        \hline

        \multicolumn{1}{|c|}{
            \multirow{2}{*}{
                \begin{tabular}[c]{@{}c@{}}
                    Segment plus $L_2$ ball\\
                    $\alpha_0[-\mathbf e_1,\mathbf e_1]+\alpha_2B_2$
                \end{tabular}
            }
        } &
        $\alpha_0,\alpha_2>0$ &
        \rule[-1.5ex]{0pt}{5ex}Arbitrary &
        $\displaystyle
        \tfrac{\min\{(\alpha_0+\alpha_2)\sqrt s,\,
            \alpha_0+\alpha_2\sqrt d\}}
        {\alpha_0+\alpha_2\sqrt s}$ \\
        \cmidrule{2-4}
        \multicolumn{1}{|c|}{} &
        \begin{tabular}[c]{@{}c@{}}
            $\alpha_0\asymp d^{-1/4}$\\
            $\alpha_2\asymp d^{-1/2}$
        \end{tabular} &
        $\sqrt d$ &
        $d^{1/4}$ \\
        \hline
    \end{tabular}
\end{table}

Randomized block norms give a general recipe for sparse losses: for each $n$, use the normalized mirror map $h_{n,\rho_n(\convexbody)}$, which is $1$-strongly convex with respect to the sampled block norm on $\convexbody$.
Here $\rho_n(\convexbody)$ bounds the radius over all equal-size $n$-partitions; see \cref{sec: block-norms-preliminaries}.
For $n\in\{1,2,\dots,d\}$, we denote by $D_n:=D_n(\convexbody)$ the corresponding root-mean-square Bregman diameter.

When sparsity $s$ is known, the per-round gradient bound satisfies $G_n\le G_n(s)$, where the uniform bound $G_n(s)$ is estimated in \cref{lem: sparse-vector-block-variance-bound}.
Invoking the regret upper bound,
\[
    \tfrac{\E[\regret_n(T)]}{2\sqrt T}\le D_nG_n\le D_n G_n(s).
\]
Optimizing the product $\psi_n := D_n G_n$ over $n$ upper bounds the regret using the best randomized block norm.
In particular, since $n = 1$ corresponds to the Euclidean geometry and $n = d$ corresponds to the entropic geometry, this regret bound is always smaller than the best of \gls{opgd} and \gls{oeg}.
Formally, we call this factor improvement in regret bound the \emph{gain} of block norms:

\paragraph{Gains using block norms.}
In our first result, we show in \cref{sec: gains-using-block-norms} that this improvement over \gls{opgd} and \gls{oeg} can be non-trivial for many common convex bodies.
To formally state our result, we define the \emph{gain} of block norms:
\begin{definition}[Block norm gain]
    Given an \gls{oco} instance with feasible set $\convexbody \subseteq \R^d$ and $s$-sparse loss function gradients, and time horizon $T$, the \emph{gain} of block norms, denoted $\gain := \gain(\convexbody, s)$ is the improvement in regret upper bound using block norms as compared to the best of Euclidean and entropic geometries:
    \begin{equation}\label{eqn: gain-def}
        \gain(\convexbody, s) := \frac{\min\{2 D_\euc G_\euc \sqrt{T}, 2 D_{\ent} G_{\ent} \sqrt{T}\}}{\min_{n = 1}^d \{2 D_n G_n \sqrt{T}\}} = \frac{\min\{D_\euc G_\euc, D_\ent G_\ent\}}{\min_{n = 1}^d D_n G_n}.
    \end{equation}
\end{definition}

As discussed, gain for any bounded $\convexbody$ and sparsity $s$ is at least $1$.
Note that since it is a comparison between regret upper bounds of various \gls{omd} algorithms, it is independent of the time horizon $T$.
Our first result shows that the gain can be as large as $\widetilde{\Omega}(d^{1/4})$ for convex sets in $\R^d$.
This includes standard convex bodies such as ellipsoids, boxes, and simple convex combinations; \cref{tab: gains} lists these gains explicitly.

\begin{restatable}[Regret bound improvements for sparse losses]{theorem}{RegretUpperBoundImprovements}\label{thm: upper-bound-improvements}
    The bounded convex sets $\convexbody_d \subseteq \R^d$ and corresponding sparsities $s_d \in [d]$ yield gains $\gain(\convexbody, s)$ as listed in \cref{tab: gains}.

    In particular, OMD with an appropriate block norm achieves a regret upper bound within a factor $\widetilde{O}(d^{-1/4})$ of the corresponding upper bounds for both \gls{opgd} and \gls{oeg} for certain families of convex sets $\convexbody_d \subseteq \R^d$ and corresponding sparsities $s_d \in [d]$.
\end{restatable}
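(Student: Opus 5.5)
The plan is to compute, for each convex body in \cref{tab: gains}, the three quantities $D_\euc G_\euc$, $D_\ent G_\ent$ and an upper bound on $\min_n D_n G_n$ obtained by evaluating $\psi_n = D_n G_n$ at a well-chosen number of blocks $n$ (typically $n \asymp d/s$, so that an $s$-sparse vector meets each block in $O(1)$ coordinates in expectation). The gain is then the ratio of the smaller of the first two to the third. Since the gain is a ratio of upper bounds, the bounds for the numerator must be tight, while the denominator only needs an upper bound.

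\textbf{Gradient side.} I use \cref{lem: sparse-vector-block-variance-bound}, which bounds $G_n(s)$, the root-mean-square dual block norm $\max_j \|g_{B_j}\|_2$ of an $s$-sparse $g\in[-1,1]^d$ under a random equal partition. It gives roughly $G_n(s) \lesssim \sqrt{\lceil s n/d\rceil + \log n}$ up to lower order terms. At the endpoints, $G_\euc = \sqrt s$ and $G_\ent = 1$, and both are attained by a $\pm 1$ vector on $s$ coordinates.

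\textbf{Diameter side.} I use the Ben-Tal--Nemirovski construction recalled in \cref{sec: block-norms-preliminaries}. The squared diameter $D_n^2$ is of order $\rho_n(\convexbody)^2 \cdot c_n$, where $\rho_n(\convexbody)=\max_{x\in\convexbody}\|x\|_{[n]}$ (maximised over equal partitions) and $c_n = O(\log n)$ for $n\ge 3$ (and $O(1)$ for $n\le 2$). So for each body I must compute $\max_{x\in\convexbody}\sum_j\|x_{B_j}\|_2$.
\begin{itemize}
\item For the $L_p$ ball, H\"older gives $\rho_n \asymp n^{1-1/p}$.
\item For the ellipsoid, Cauchy--Schwarz blockwise gives $\rho_n \le \big(\sum_j \max_{i\in B_j} w_i^2\big)^{1/2}$. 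The worst partition spreads the largest weights into distinct blocks, giving $\rho_n \le \|w_{\le n}\|_2$.
\item For the box, $\rho_n = \sum_j \|w_{B_j}\|_2$, which is maximised by isolating large weights. This gives at most $\|w_{\le n}\|_1 + \sqrt{n}\,\|w_{>n}\|_2$.
\item For Minkowski sums, the block norm is subadditive, so the $\rho_n$ values add. I use $\rho_n(B_1)=1$, $\rho_n(B_2)=\sqrt n$, $\rho_n(B_\infty)=\sqrt{nd}$, and $\rho_n(\text{segment})=1$.
\item For the simplex, $\rho_n=1$.
\end{itemize}

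\textbf{Assembling the ratios.} For $n=1$ I get $D_\euc G_\euc$ equal to the Euclidean radius times $\sqrt s$. For $n=d$ I get the $L_1$ radius times $\sqrt{\log d}$ times $1$, which reproduces the numerators such as $\min\{\|w\|_2\sqrt s,\|w\|_1\}$ up to logarithms. Choosing $n=s$ (and in some rows $n\asymp d/s$, picking whichever gives the stated denominator), $G_n$ is $O(\sqrt{\log d})$-ish and $D_n\approx\rho_s$. For the box, for example, this yields the denominator $\|w_{\le s}\|_1+\sqrt s\|w_{>s}\|_2$, and the other rows follow the same pattern. The specific-instance rows then follow by substitution. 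For instance, with $p=4/3$ and $s=\sqrt d$, the gain is $\min\{d^{1/4}, d^{1/4}\}/d^{1/8}=d^{1/8}$. For the box with $w_1=d^{-1/4}$, $w_j=d^{-1}$, we have $\|w\|_1\asymp 1$, $\|w\|_2\sqrt s\asymp 1$, and the denominator is $d^{-1/4}+d^{1/4}\cdot d^{-1/2}$, giving gain $d^{1/4}$. This also proves the ``in particular'' clause.

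\textbf{Main obstacle.} I expect the hardest step to be matching the block-norm radius bound with the actual regime where the gradient bound is $O(1)$. Taking $n=s$ blocks gives $\approx d/s$ coordinates per block, so an $s$-sparse vector has $\approx 1$ coordinate per block on average. However, the maximum over blocks is only $O(\log n/\log\log n)$ (a balls-in-bins bound), which is where the $\ln s$ and $\widetilde\Omega$ losses arise.

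A second difficulty is that the worst-case $\rho_n$ over partitions, rather than the random one, enters the normalisation. The bounds must therefore hold for adversarial partitions, as above.
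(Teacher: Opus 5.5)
\textbf{Overall assessment.} Your template matches the paper's. The numerators use the endpoint quantities $D_\euc\sqrt s$ and $D_\ent\cdot 1$. The denominator uses $n=s$ blocks, \cref{lem: block-norm-diameter-bound-general}, and a worst-partition radius bound for each body, and your radius computations agree with the paper's. However, two steps are not right as written.

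\textbf{The gradient bound is misstated.} The bound you quote, $G_n(s)\lesssim\sqrt{\lceil sn/d\rceil+\log n}$, treats $n$ as a block \emph{size}. It contradicts your own endpoint values: at $n=d$ it gives $\sqrt{s+\log d}$ rather than $G_\ent=1$, and at $n=1$ it gives $1$ rather than $\sqrt s$. For $n$ blocks, \cref{lem: sparse-vector-block-variance-bound} actually gives $G_n(s)\le 3\max\{\sqrt{s/n},\sqrt{\ln n}\}$. So an $s$-sparse vector meets each block in $O(1)$ coordinates when $n\asymp s$, not $n\asymp d/s$.
\begin{itemize}
\item The paper uses $n=s$ in every row.
\item $n\asymp d/s$ is never what is needed. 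For example, on the simplex with $s=\ln d$ it only yields $D_nG_n=O(\ln d)$, which is worse than the entropic bound.
\item Plugging $n=s$ into your formula would add a spurious $\sqrt{s^2/d}$ factor and break the arbitrary-$s$ rows when $s\gg\sqrt d$.
\end{itemize}
You do end up asserting $G_s=O(\sqrt{\ln s})$, which is correct once you cite the lemma as stated. Note also that the $\ln s$ in the denominators has two sources. One $\sqrt{\ln s}$ comes from $1/\gamma_s=e\ln s$ in $D_s$, and the other from the occupancy bound in $G_s$; it does not come from the balls-in-bins bound alone.

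\textbf{The numerator lower bounds are missing.} This is the more substantive gap. The numerator needs \emph{lower} bounds on $D_\euc$ and $D_\ent$, and nothing in your proposal supplies them. The Ben-Tal--Nemirovski modulus and \cref{lem: block-norm-diameter-bound-general} give only $D_n=O(\rho_n\sqrt{\ln n})$, so your claim that ``$D_n^2$ is of order $\rho_n^2c_n$'' is unjustified in exactly the direction you need. Moreover, the Bregman diameter depends on the starting point $x^{(1)}$, which you never fix.

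The paper closes this with central symmetry. For $z,-z\in\convexbody$ and any start $a$, strong convexity and the triangle inequality give
\[
2D_h^2\ge B_h(z\|a)+B_h(-z\|a)\ge\tfrac12\bigl(\|z-a\|^2+\|z+a\|^2\bigr)\ge\|z\|^2 .
\]
So exhibiting one witness point of large $L_2$ norm and one of large $L_1$ norm yields $D_\euc\ge R_\euc/2$ and $D_\ent\ge R_\ent/2$ for every starting point. For the non-symmetric simplex, the paper instead fixes $x^{(1)}=\frac1d\mathbf 1_d$ and computes $D_1$ and $D_d$ exactly at the vertices.

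Your stronger claim $D_\ent\gtrsim\rho_d\sqrt{\ln d}$ is also provable for symmetric bodies, though it is not needed. For the even, $p$-homogeneous map one has $B(z\|a)+B(-z\|a)=2h(z)+2(p-1)h(a)\ge 2h(z)$, and $h_{d,\rho}(z)\ge\frac{\ln d}{p_d}\rho^{2-p_d}\|z\|_1^{p_d}$ by H\"older.

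With these two fixes, your argument becomes the paper's master bound
\[
\gain=\Omega\bigl(\min\{R_\euc\sqrt s,R_\ent\}/(M(s)\ln s)\bigr).
\]
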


To prove gains in \cref{tab: gains}, we need to (1) lower bound Euclidean and entropic diameters ($D_\euc, D_\ent$ resp.) as well as gradient bounds ($G_\euc, G_\ent$ resp.), and (2) upper bound $D_n$ and $G_n$ for the $n$th block norm.
Since blocks are chosen uniformly at random, we bound $G_n$ for $s$-sparse losses in \cref{lem: sparse-vector-block-variance-bound} using standard concentration inequalities.
Bounding the root-mean-square diameter $D_n$ for given starting point $\var{x}{1} \in \convexbody$ is more involved; we show in \cref{lem: block-norm-diameter-bound-general} that $D_n=O(\rho_n(\convexbody)\sqrt{\ln n})$, reducing this to bounding the block norm radius of $\convexbody$.

The gains can be extended to arbitrary convex sets $\convexbody$ by `sandwiching' it between scaled copies of a centrally symmetric body $\convexbody_0$ (i.e., for all $x \in \convexbody_0$, $-x$ is also in $\convexbody_0$):

\begin{restatable}[Sandwich theorem for gains]{corollary}{SandwichGain}\label{cor: sandwich-gain}
    Consider nonempty bounded convex sets $\convexbody,\convexbody_0\subseteq \R^d$ such that  $\convexbody_0 \subseteq \convexbody\subseteq\beta\convexbody_0$ for some $\beta \ge 1$.
    If $\convexbody_0$ is centrally symmetric about the origin ($\convexbody_0=-\convexbody_0$), then for all $s\in[d]$,
    \[
        \gain(\convexbody,s)\ge\frac{1}{\beta}\,\gain(\convexbody_0,s).
    \]
    for any given starting point $\var{x}{1}\in\convexbody_0$.
\end{restatable}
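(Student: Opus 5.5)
The plan is a term-by-term comparison of numerator and denominator in \cref{eqn: gain-def}, using that every gradient quantity depends only on the loss class and not on the feasible set. Under \cref{assmp: loss-functions} with fixed sparsity $s$, the constants $G_\euc$, $G_\ent$ and the uniform bounds $G_n(s)$ from \cref{lem: sparse-vector-block-variance-bound} are the same for $\convexbody$ and $\convexbody_0$. All dependence on the feasible set therefore sits in the diameters $D_\euc$, $D_\ent$, $D_n$. It suffices to show two things for the fixed starting point $\var{x}{1}\in\convexbody_0\subseteq\convexbody$:
\[
\textup{(a) } D_\euc(\convexbody)\ge D_\euc(\convexbody_0),\quad D_\ent(\convexbody)\gtrsim D_\ent(\convexbody_0);
\qquad
\textup{(b) } D_n(\convexbody)\lesssim \beta\, D_n(\convexbody_0)\ \text{ for every } n.
\]
Dividing (a) by (b) then gives $\gain(\convexbody,s)\ge \gain(\convexbody_0,s)/\beta$, up to the absolute constants hidden in the diameter estimates. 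Those constants are already suppressed in the notion of gain in \cref{tab: gains}.

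For the denominator (b), I would use \cref{lem: block-norm-diameter-bound-general}, which gives $D_n(\convexbody)=O(\rho_n(\convexbody)\sqrt{\ln n})$. Since block norms are homogeneous and $\convexbody\subseteq\beta\convexbody_0$, we get $\rho_n(\convexbody)\le\beta\,\rho_n(\convexbody_0)$ for every equal-size $n$-partition, and hence for the worst one. So $D_n(\convexbody)\le O(\beta\,\rho_n(\convexbody_0)\sqrt{\ln n})$. For this to be compared with $D_n(\convexbody_0)$, I also need the matching lower bound $D_n(\convexbody_0)=\Omega(\rho_n(\convexbody_0)\sqrt{\ln n})$ at a start point in $\convexbody_0$. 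If that is unavailable, I will instead compare against the upper bound $\rho_n(\convexbody_0)\sqrt{\ln n}$ that is actually used in the gain computations for $\convexbody_0$. This is exactly the quantity entering the formulas of \cref{tab: gains}, so the corollary applies to those.

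For the numerator (a), the Euclidean case is immediate. The map $h_\euc=\tfrac12\|x\|_2^2$ does not depend on the set, and $D_\euc$ is a maximum of $\|x-\var{x}{1}\|_2$ over a larger set. The entropic (i.e.\ $n=d$) case is where the normalization $\rho_d(\cdot)$ enters, and this is where central symmetry is used. The normalized map is $1$-strongly convex on a ball of radius $\rho_d(\convexbody)\ge\rho_d(\convexbody_0)$, and its Bregman diameter scales like $\rho_d\sqrt{\ln d}$ over the set. Central symmetry of $\convexbody_0$ guarantees that $\convexbody_0$ contains a point $x$ with $\|x-\var{x}{1}\|\ge\rho(\convexbody_0)$ in the relevant norm: take $x=\pm$ the maximizer of the norm, so that $\|x-\var{x}{1}\|\ge\|x\|$. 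This makes the diameter of $\convexbody$ as seen from $\var{x}{1}$ at least a constant times that of $\convexbody_0$. Without symmetry, $\var{x}{1}$ could sit near the boundary, where the two diameters might differ, so I would state this step explicitly.

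I expect the main obstacle to be the entropic/normalized diameter comparison in (a), together with the matching lower bound on $D_n(\convexbody_0)$ needed in (b). The Bregman divergence of a normalized block mirror map is not simply monotone under enlarging the set, because the normalization $\rho_n$ changes with the set. My first attempt will be to write $B_{h_{n,\rho}}(x\|y)=\rho^2 B_{h_n}(x/\rho\,\|\,y/\rho)$ and reduce both comparisons to the unnormalized map on the unit block ball. There, monotonicity in the set is trivial, and the central symmetry argument gives the needed lower bound at the given starting point.
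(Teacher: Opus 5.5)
Your plan does not prove the corollary as stated, and it fails at exactly the step you flag. The claim is an exact inequality about $\gain$ as defined in \cref{eqn: gain-def}; that definition suppresses no absolute constants.

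For the denominator you bound $D_n(\convexbody)\le O(\beta\,\rho_n(\convexbody_0)\sqrt{\ln n})$ via \cref{lem: block-norm-diameter-bound-general}. You then need $D_n(\convexbody_0)=\Omega(\rho_n(\convexbody_0)\sqrt{\ln n})$ from an arbitrary start $\var{x}{1}\in\convexbody_0$. Nothing in the paper gives this, and it is not routine: $\rho_n$ is a worst case over equal-size partitions while $D_n$ averages over a random one, and $\var{x}{1}$ need not be the center. Even granting it, you would only get $\gain(\convexbody,s)\ge c\,\gain(\convexbody_0,s)/\beta$ for some unspecified $c$. Your fallback of comparing against $\rho_n(\convexbody_0)\sqrt{\ln n}$ proves a different statement, namely that the tabulated lower bounds for $\convexbody_0$ transfer to $\convexbody$.

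The numerator step has the same defect. A far witness point plus strong convexity yields only $D_\ent(\convexbody)\ge\rho_d(\convexbody_0)/\sqrt2$. But $D_\ent(\convexbody_0)$ can be of order $\rho_d(\convexbody_0)\sqrt{\ln d}$ (e.g.\ the $L_1$ ball started at the origin). So ``at least a constant times'' does not follow; you lose a factor $\sqrt{\ln d}$.

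The missing idea is to compare the normalized divergences exactly, partition by partition. Use that $h=h_{\mathcal B}$ is even and homogeneous of degree $p=p_n\in(1,2]$.

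First, $B_{h_{\mathcal B,\rho}}(z\|x)=\rho^{2-p}B_h(z\|x)$ with $2-p\ge0$, so the normalized divergence is nondecreasing in $\rho$. Together with $\rho_n(\convexbody_0)\le\rho_n(\convexbody)$ and nonnegativity of Bregman divergences, this gives $D_n(\convexbody_0)\le D_n(\convexbody)$ for every $n$, including $n=d$, with no symmetry needed. So the normalization helps monotonicity rather than obstructing it. Applied to $\convexbody\subseteq\beta\convexbody_0$, it also gives $D_n(\convexbody)\le D_n(\beta\convexbody_0)$.

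Second, $\rho_n(\beta\convexbody_0)=\beta\rho_n(\convexbody_0)$ and $B_h(\beta z\|x)=\beta^pB_h(z\|x/\beta)$. These reduce $D_n(\beta\convexbody_0)\le\beta D_n(\convexbody_0)$ to showing $\sup_{z\in\convexbody_0}B_h(z\|\var{x}{1}/\beta)\le\sup_{z\in\convexbody_0}B_h(z\|\var{x}{1})$. The start point is fixed rather than scaled with the body, so rescaling pulls it toward the origin. This is where central symmetry is really used, not to produce a far witness point. Expand $B_h(z\|tx)=h(z)+(p-1)t^ph(x)-t^{p-1}\langle\nabla h(x),z\rangle$. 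Pairing $z$ with $-z$ turns the supremum over $\convexbody_0$ into $(p-1)t^ph(x)+\sup_{z}\bigl[h(z)+t^{p-1}|\langle\nabla h(x),z\rangle|\bigr]$, which is nondecreasing in $t\in[0,1]$.

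The chain $D_n(\convexbody_0)\le D_n(\convexbody)\le\beta D_n(\convexbody_0)$ then holds for all $n$. Since the factors $G_n(s)$ are common to both bodies, this gives the stated inequality with no loss.
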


\paragraph{Mirror maps induced by geometry of feasible set.}
We show that the optimal mirror map depends not only on the geometry of the feasible set $\convexbody$, but also on the gradient set $\mathcal{G}$.
Any feasible set $\convexbody$ that is convex and centrally symmetric induces a norm, and any mirror map that is strongly convex with respect to the norm induced by $\convexbody$ is another natural choice for \gls{omd}.
For example, Cheng \emph{et al.} \cite{cheng2019geometry} show that when $\convexbody$ is the $L_p$ norm ball with $1 \le p \le 2$ and the gradients are restricted to the dual $L_q$ norm ball (where $1/p + 1/q = 1$), an appropriately chosen norm-based mirror map achieves minimax-optimal regret rates for \gls{oco}.
In \cref{sec: induced-mirror-maps-are-suboptimal}, we show that $L_p$ mirror maps with $p>1$ can have suboptimal regret upper bounds for sparse gradients on the $L_p$ norm ball, and that a random block norm achieves asymptotically better regret bound.

\paragraph{Instances where gains are realized.}
\cref{thm: upper-bound-improvements} and \cref{tab: gains} show that block norms can improve the \emph{worst-case} regret upper bound \cref{eqn: regret-bound-general} over Euclidean and entropic mirror maps by large factors.
In \cref{sec: instances-with-realized-gains}, we construct explicit \gls{oco} instances where these improvements are realized: one for the probability simplex, and another for a convex hull of the probability simplex and a single point of the form $a \mathbf{1}_d$.
Instances where block norms beat either the Euclidean or the entropic geometry follow using standard lower-bound constructions \cite{hazan_introduction_2016, mourtada2019optimality}; our main technical contribution lies in finding instances where the realized regret is \emph{simultaneously} poor for both Euclidean and entropic mirror maps as compared to the optimal block norm.

\begin{restatable}[Realized regret gains for sparse losses]{theorem}{regretImprovementsBlockNorm}\label{thm: regret-improvement-using-block-norms} \phantom{ }
    \begin{enumerate}
        \item There exists a family of online convex optimization (\gls{oco}) instances over the polytopes $\convexbody_d=\mathrm{conv}(\simplex_d \cup \{d^{-3/4}\mathbf{1}_d\}) \subseteq \R^d$ with $s= \sqrt d$-sparse loss functions where,
        \[
            \min\{\E [\regret_{\euc}(T)], \E[\regret_d(T)]\} = \Omega\left(\frac{d^{1/4}}{\ln d}\right) \ \E[\regret_s(T)].
        \]
        That is, \gls{omd} with the mirror map $h_{s,\rho_s(\convexbody_d)}$ corresponding to the $s$th block norm improves the regret by a factor $\Omega(d^{1/4}/\ln d)$ over the best of online projected gradient descent (\gls{opgd}) and OMD$_d$ with standard step sizes.

        \item There exists a family of online convex optimization (\gls{oco}) instances over the probability simplices $\simplex_d$ with $s= \ln d$-sparse loss functions, where
        \[
            \min\left\{\E[\regret_\euc(T)], \E[\regret_\ent(T)]\right\} = \Omega\left(\tfrac{\sqrt{\ln d}}{\ln \ln d}\right) \ \E[\regret_{\lfloor\ln d\rfloor}(T)].
        \]
        That is, OMD with the $\lfloor\ln d\rfloor$th block norm improves the regret by a factor $\Omega\left(\tfrac{\sqrt{\ln d}}{\ln \ln d}\right)$ over the best of \gls{opgd} and \gls{oeg} with standard step sizes.
    \end{enumerate}
\end{restatable}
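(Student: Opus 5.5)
For each part, the plan is to prove an upper bound on the regret of the intermediate block norm and matching lower bounds on the realized regret of both \gls{opgd} and OMD$_d$/\gls{oeg}. The upper bound for the $n$th block norm is the standard \gls{omd} guarantee $\E[\regret_n(T)]\le 2D_nG_n\sqrt T$. We will combine it with \cref{lem: block-norm-diameter-bound-general}, which gives $D_n=O(\rho_n(\convexbody)\sqrt{\ln n})$, and with \cref{lem: sparse-vector-block-variance-bound} for $G_n(s)$. In part 1, with $n=s=\sqrt d$, a random $s$-sparse gradient hits $O(1)$ coordinates per block in expectation, up to logarithmic fluctuations. This gives $G_s=\widetilde O(1)$, and the block radius of $\convexbody_d$ is $O(1)$, since the extra vertex $d^{-3/4}\mathbf 1_d$ has block norm $\sqrt d\cdot d^{-3/4}\sqrt{\sqrt d}=O(1)$. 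So $\E[\regret_s(T)]=O(\sqrt{T}\,\mathrm{polylog}\,d)$. In part 2, with $n=\lfloor\ln d\rfloor$ and $s=\ln d$, we get $D_n=O(\sqrt{\ln\ln d})$. The maximum number of the $\ln d$ support coordinates landing in one block is $O(\ln\ln d)$, which should make $G_n$ of order $\sqrt{\ln\ln d}$ or so. Together these give regret $O(\ln\ln d\,\sqrt T)$.

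For the lower bounds we design one oblivious random loss sequence that hurts both classical geometries simultaneously. Linear losses $f^{(t)}(x)=\langle g^{(t)},x\rangle$ suffice. The sequence will be split into two phases, or mixed randomly, so that each phase targets one geometry.

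The \textbf{Euclidean-hurting component} uses $s$-sparse $\pm1$ gradients on random supports, with a small bias toward a hidden good coordinate (or toward the point $d^{-3/4}\mathbf 1_d$ in part 1). Because \gls{opgd} with standard step $\eta=D_\euc/(G_\euc\sqrt T)$ has $G_\euc=\sqrt s$, its step is too small by a factor of about $\sqrt s$. It therefore cannot move toward the minimizer fast enough, and we show it loses $\Omega(\sqrt{sT})\cdot D_\euc$-order regret via a drift argument: the iterate stays near $x^{(1)}$ for a constant fraction of rounds while the comparator gains linearly from the bias.

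The \textbf{entropic-hurting component} exploits the large diameter $D_\ent\approx\sqrt{\ln d}$, or the poor diameter of the extra vertex under the $n=d$ map in part 1. The standard lower-bound instance for exponentiated gradient (in the spirit of \cite{mourtada2019optimality}) uses pure noise losses with no bias. Its large step $\eta\propto\sqrt{\ln d/T}$ makes the iterate chase noise, which incurs $\Omega(\sqrt{T\ln d})$ regret. In part 1 the good point is $d^{-3/4}\mathbf 1_d$, which the $L_1$ geometry measures as far from the vertices. This should force $\Omega(d^{1/4}\sqrt T/\mathrm{polylog})$ regret for both maps.

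We then check that the block-norm algorithm's upper bound is unaffected by the mixture, which is immediate since its guarantee holds for every sequence in the class. Finally we take ratios.

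The main obstacle is the simultaneous lower bound: proving that standard-step \gls{opgd} and \gls{oeg} each incur large \emph{realized} expected regret on the same sequence. Upper-bound analyses do not supply this, so it requires tracking the iterate dynamics. For that step I would first analyze each geometry on its own tailored phase with the other phase's effect bounded crudely, using potential or drift arguments on the coordinate of interest together with an anti-concentration bound on the random-walk losses.
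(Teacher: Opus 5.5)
Your upper bounds for the block-norm algorithm are fine and match the paper's: the radius is $1$ in both parts, and then \cref{lem: block-norm-diameter-bound-general} and \cref{lem: sparse-vector-block-variance-bound} apply. The gap is in the lower bounds, and your entropic-hurting component cannot work as described. Against an oblivious adversary, suppose round $t$'s loss is zero-mean noise independent of earlier rounds and of the partition. Then $\E[f^{(t)}(x^{(t)})]=0$ for \emph{every} algorithm, because $x^{(t)}$ depends only on the past. So on pure noise all algorithms, the block-norm one included, have the same expected regret $-\E[\min_{x}\sum_t f^{(t)}(x)]$, and ``chasing noise'' produces no separation in expectation. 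The fixed-step Hedge lower bound you have in mind is really a slow-start argument on an instance with a persistent gap. Your two-phase plan has a second problem: regret is measured against one comparator for the whole horizon. Per-phase regrets against per-phase comparators only upper bound the total, so lower bounds for the phases do not transfer. You flag this as the main obstacle but give no way through it.

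The missing idea is that a single biased sequence already defeats both classical geometries, for different reasons. The paper uses $f^{(t)}(x)=-\langle c^{(t)},x\rangle$ with $c^{(t)}_1=1$ and the other $s-1$ support coordinates uniform in $[2,d]$. Then $x^*=\mathbf e_1$ for every realization, and round $t$ costs at least $(1-x_1^{(t)})-\tfrac{s-1}{d-1}$ in expectation, or $(1-x_1^{(t)})-d^{-1/4}$ on $P$. It remains only to bound how fast $x_1$ can grow.

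For OPGD, $x_1^{(t+1)}\le\max\{x_1^{(1)},x_1^{(t)}+\eta\}$ with $\eta=\Theta(1/\sqrt{sT})$. On the simplex this holds because the projection subtracts a nonnegative threshold; on $P$ it needs a short KKT argument. For OEG with $\eta=\sqrt{\ln d/T}$, $x_1^{(t+1)}\le e^{\eta}x_1^{(t)}$ starting from $1/d$, so $x_1\le d^{-1/2}$ for $\Theta(\sqrt{T\ln d})$ rounds. Your OPGD intuition is therefore right, and the same slow start is what hurts OEG.

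In part 1 the start is $A\mathbf 1_d$, and the genuinely hard step, absent from your plan, is OMD$_d$ with the normalized map $h_{d,R}$, $R=d^{1/4}$. Its step size $\eta=\Theta(R/\sqrt T)$ is large, yet \cref{lem: polynomial-lower-bound-entropic-coordinate-bound} shows $x_1^{(t)}\le A+\eta(t-1)/R^2$. The proof writes points of $P/R$ as $b\mathbf 1_d+\lambda/R$ and uses the multiplier in the Bregman projection's optimality conditions. This shows that the convex weight $db$ on the vertex $\mathbf 1_d/d$ is released slowly, which caps $x_1$. Your remark that the $L_1$ geometry sees $A\mathbf 1_d$ as far away correctly explains why $D_\ent=\Theta(d^{1/4})$. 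But a large diameter only weakens the upper bound; a realized lower bound needs this iterate-level control.
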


For each constructed instance, the regret for block norms is upper bounded using \cref{eqn: regret-bound-general} (as before).
However, since this is an upper bound, it cannot be used to lower bound the actual regret for \gls{opgd} and \gls{oeg}.
Instead, we show that for these instances, the \gls{opgd}/\gls{oeg} iterates $\var{x}{t}$ stay `far enough' from the optimal $x^*$, incurring a `large enough' regret at any time step $t \in [T]$.
For both examples and both algorithms, we require separate, delicately constructed notions of what is `far enough' or `large enough'.

\paragraph{Learning optimal block size for unknown sparsity.}
In the above results, the choice of the optimal number of blocks depends on sparsity $s$.
In \cref{sec: learning-optimal-mirror-map}, we consider settings where the sparsity $s$ is unknown in advance.

First, in \cref{sec: alternating-mirror-maps-is-suboptimal}, we construct an example where using the standard step size in the \gls{oeg} and \gls{opgd} setups, switching the mirror maps in each iteration can in fact cause the algorithm to converge to a suboptimal point in the domain, resulting in a linear regret.
This is surprising as any online mirror descent setup must decrease a certain potential function across each iteration, and therefore one might expect that alternating between two mirror maps does not affect convergence.
However, alternately changing the mirror maps breaks this monotonicity due to the interaction of divergences:

\begin{restatable}[Linear regret when alternating between mirror maps]{theorem}{alternatingMirrorMaps}\label{thm: alternating-mirror-maps-is-suboptimal}
    There exists a family of linear loss functions over the probability simplex $\simplex_2 := \{x \in \R^2: x_1 + x_2 = 1, x \ge 0\}$ in two dimensions such that the algorithm that starts at $x^{(1)} = (1/2, 1/2)$ and alternates between the updates for online projected gradient descent (\gls{opgd}) and online exponentiated gradient (\gls{oeg}) at each time step incurs regret $\Omega(T)$, irrespective of any fixed step sizes for \gls{opgd} and \gls{oeg}.
\end{restatable}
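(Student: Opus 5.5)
We will build the instance directly in the one-dimensional parametrization $x=(p,1-p)$ of $\simplex_2$. We use linear losses $\var{f}{t}(x)=\delta_t x_1$ with $\delta_t\in[-1,1]$, so the gradient is $(\delta_t,0)$ and satisfies the boundedness assumption. Under these losses the two updates have closed forms.
\begin{itemize}
\item \gls{opgd} with step $\eta$ is an additive move $p\mapsto \mathrm{clip}_{[0,1]}(p-\eta\delta_t/2)$. The gradient component along the simplex direction is $\delta_t/2$, and the projection is a clip.
\item \gls{oeg} with step $\eta'$ is a logit shift: $\ln\tfrac{p}{1-p}\mapsto \ln\tfrac{p}{1-p}-\eta'\delta_t$. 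Equivalently, $p\mapsto \tfrac{p e^{-\eta'\delta_t}}{p e^{-\eta'\delta_t}+1-p}$.
\end{itemize}
The key asymmetry is that an \gls{opgd} step moves $p$ by an amount independent of $p$. An \gls{oeg} step, by contrast, moves $p$ by only about $p(1-p)\eta'|\delta_t|$, which is tiny near the boundary $p\approx 0$. Since the step sizes $\eta,\eta'$ are fixed, the (oblivious) loss sequence may be chosen depending on them.

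The plan is to trap the alternating iterates in a 2-cycle $\{c,c'\}$ near $0$, at a cost of $\Theta(c)$ regret per cycle.
\begin{enumerate}
\item Fix $c:=\min\{1/4,\eta/4\}$. On \gls{oeg} rounds play $\delta=a:=1$, which penalizes coordinate~1. This sends $c$ to $c':=\tfrac{c}{c+(1-c)e^{\eta'}}<c$.
\item On \gls{opgd} rounds play $\delta=-b$ with $b:=2(c-c')/\eta$, which sends $c'$ back to $c$ exactly. No clipping occurs, since $c<1$.
\item Since $c\le \eta/4$, we get $b\le 2c/\eta\le 1/2$, so $b\in[0,1/2]$ is a legal gradient entry and $b<a$.
\end{enumerate}
Over each pair of rounds the cumulative gradient on coordinate~1 is $a-b\ge 1/2>0$. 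Hence the comparator $x^*=(0,1)$, i.e.\ $p=0$, incurs loss $0$ against these linear losses in the cyclic phase. The learner's loss per pair is $c\cdot a - c'\cdot b\ge c(a-b)\ge c/2$. So each cycle contributes at least $c/2$ regret, and over $\Theta(T)$ cycles the regret is $\Omega(T)$, with the constant depending only on $\eta$ (and not on $T$).

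It remains to steer the iterate from the start $p=1/2$ to $c$ at an \gls{oeg}-round boundary. This is a transient of length independent of $T$.
\begin{enumerate}
\item On \gls{oeg} rounds set $\delta=0$, so the iterate does not move.
\item On \gls{opgd} rounds use $\delta\in(0,1]$, which moves $p$ down by up to $\eta/2$ per step. The last step is chosen with the exact $\delta$ needed to land at $c$. This takes at most $\lceil 1/\eta\rceil$ \gls{opgd} rounds.
\end{enumerate}
The transient contributes only $O(1/\eta)$ to the total loss and $O(1/\eta)$ to the total gradient on coordinate~1. Regret is measured against the best fixed point in hindsight for the whole sequence, and the cyclic phase supplies cumulative gradient $\Omega(T)$ on coordinate~1. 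So $p=0$ remains the optimal comparator for large $T$, and the transient changes the regret by only $O(1/\eta)$.

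The main obstacle is keeping the gradient bound $|\delta|\le 1$ while the cycle still has net drift toward the wrong coordinate. This is exactly where the choice $c\le\eta/4$ matters: near the boundary, \gls{oeg} needs a large (but bounded) push $a$ to move at all, while \gls{opgd} needs only a small push $b$ to undo it. A symmetric cycle around $p=1/2$ would instead require $\eta'>2\eta$, and so would fail for general step sizes.

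The remaining verifications are routine:
\begin{itemize}
\item the closed forms of the two updates on $\simplex_2$, including that the Euclidean projection is the clip;
\item $c'<c$;
\item the bookkeeping of the $O(1/\eta)$ transient against the linearly growing cumulative gradient.
\end{itemize}
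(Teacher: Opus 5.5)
There is a genuine gap: the constant hidden in your $\Omega(T)$ depends on the step size and tends to $0$ with $\eta$. Your cycle sits at $c=\min\{1/4,\eta/4\}$, so each pair of rounds contributes regret between $c/2$ and $c$. The cyclic phase therefore yields regret of order $\min\{1,\eta\}\,T$, and the transient adds $O(1+1/\eta)$. The theorem, however, concerns arbitrary fixed step sizes, including horizon-dependent ones. Above all it must cover the standard choices $\eta,\eta'=\Theta(1/\sqrt T)$, under which OPGD and OEG each have $O(\sqrt T)$ regret; that contrast is the entire point of the statement. At $\eta=\Theta(1/\sqrt T)$ your instance gives regret $\Theta(\eta T)+\Theta(1/\eta)=\Theta(\sqrt T)$, i.e.\ no separation at all.

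Relatedly, the transient is not of length independent of $T$. It needs about $2/\eta$ rounds, which exceeds $T$ once $\eta\lesssim 1/T$. In that regime the transient alone happens to give linear regret, but you would have to argue it. If instead the constant is allowed to depend on $\eta$, the statement loses its content. With a $T$-independent step, plain OPGD already suffers regret of order $\min\{1,\eta\}\,T$ on the alternating losses $x_1,-x_1,x_1,\ldots$, so nothing specific to alternation would be shown.

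What is missing is a mechanism whose regret constant is uniform in $(\eta,\eta')$. The paper gets it by going all the way to the boundary instead of stopping at distance $\eta/4$ from it, together with a case split on $\eta$ versus $1/T$.

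If $\eta\ge 16/T$, OEG rounds receive zero loss and OPGD rounds receive $-x_1$, so OPGD reaches the vertex $\mathbf{e}_1$ within $T/8$ rounds. The multiplicative OEG update can never restore mass on a zero coordinate, and OPGD rounds keep pushing toward $\mathbf{e}_1$. So the iterate stays frozen at $\mathbf{e}_1$ even after the OEG-round losses switch to $-2x_2$, which makes $\mathbf{e}_2$ optimal, and the regret is at least $3T/8$.

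If $\eta<16/T$, OPGD is simply too slow. With loss $-x_2$ on OPGD rounds and zero loss on OEG rounds, $x_2\le 3/4$ throughout the first $T/16$ rounds, giving regret at least $T/128$.

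Your near-boundary cycle is a soft version of the first case: OEG is weak near the boundary but exactly frozen only on it. It is this exact freezing, plus the separate slow-OPGD case, that removes the dependence on the step sizes. A side benefit is that the paper's adversary only needs to know which case holds and can be randomized over the two sequences, whereas yours needs the exact values of $\eta$ and $\eta'$.
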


Next, in \cref{sec: mwu-over-mirror-maps}, we design an adaptive algorithm that can tune to the best block norm in the portfolio, by using a Hedge meta-algorithm over different mirror maps \cite{hazan_introduction_2016}.
Our algorithm is not restricted to block norms, and applies generally to any collection of mirror maps.
Aggregation over learning rates is used in MetaGrad \cite{van2016metagrad}; other approaches to parameter adaptation include AdaGrad \cite{duchi2011adaptive}, AdaHedge \cite{erven2011adaptive}, and coin-betting algorithms \cite{Orabona2016}.

\begin{restatable}[Learning mirror maps online]{theorem}{combiningMirrorMaps}\label{thm: mirror-weights}
    There exists an online convex optimization (\gls{oco}) algorithm that given a convex body $\convexbody \subseteq \R^d$, a set $M = \{m_1, \ldots, m_N\}$ of $|M| = N$ mirror maps on $\convexbody$, and any sequence of online loss functions $f^{(1)}, \ldots, f^{(T)}$ such that $\max_{t \in [T], x, z \in \convexbody} f^{(t)}(x) - f^{(t)}(z) \le \rho$, achieves regret at most
    \[
        \regret(T) \le \min_{\ell \in [N]} \regret_{m_\ell}(T) + 2 \rho \sqrt{T \ln N},
    \]
    as long as $T \ge \ln N$.
\end{restatable}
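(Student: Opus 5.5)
The plan is to run the $N$ mirror-map OMD algorithms in parallel as experts, and to aggregate them with a Hedge meta-algorithm over losses rescaled to $[0,1]$.

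For each $\ell \in [N]$, we maintain an independent copy $\mathcal{A}_\ell$ of OMD with mirror map $m_\ell$, run with its own step size from the same starting point. Each copy $\mathcal{A}_\ell$ is fed the true loss $f^{(t)}$, and hence its true gradient $\nabla f^{(t)}(x_\ell^{(t)})$ at its own iterate, every round. The trajectory of $\mathcal{A}_\ell$ is therefore exactly the trajectory it would have if run alone, and its cumulative loss is $\sum_t f^{(t)}(x_\ell^{(t)}) = \min_{x\in\convexbody}\sum_t f^{(t)}(x) + \regret_{m_\ell}(T)$. This avoids the coupling pathology of \cref{thm: alternating-mirror-maps-is-suboptimal}, because no mirror map is ever applied to another map's iterate.

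The meta-learner keeps weights $w_\ell^{(t)}$, initialized uniformly. We could simply play the convex combination $x^{(t)} = \sum_\ell p_\ell^{(t)} x_\ell^{(t)}$, where $p^{(t)} = w^{(t)}/\|w^{(t)}\|_1$. This point lies in $\convexbody$ by convexity, and Jensen gives $f^{(t)}(x^{(t)}) \le \sum_\ell p^{(t)}_\ell f^{(t)}(x^{(t)}_\ell)$; alternatively we sample $\ell \sim p^{(t)}$ and bound expected regret. Either way it suffices to bound the Hedge regret on the expert losses $c_\ell^{(t)} := f^{(t)}(x_\ell^{(t)})$.

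To put these losses in range, we subtract $\min_{z\in\convexbody} f^{(t)}(z)$, or any common per-round offset, since Hedge is invariant to it. The hypothesis $f^{(t)}(x)-f^{(t)}(z)\le\rho$ then gives $\tilde c_\ell^{(t)} \in [0,\rho]$. The update is $w_\ell^{(t+1)} = w_\ell^{(t)}\exp(-\eta \tilde c_\ell^{(t)}/\rho)$. If the offset needs $\min_z f^{(t)}(z)$, which may not be computable, we instead use $f^{(t)}(x_1^{(t)})$ and losses in $[-\rho,\rho]$; this costs only a constant that we absorb or handle by the symmetric Hedge bound. Since all rescaled losses lie in $[0,1]$, the standard Hedge analysis from \cite{hazan_introduction_2016} with $\eta = \sqrt{\ln N / T}$ gives
\[
\sum_t \langle p^{(t)}, \tilde c^{(t)}\rangle/\rho - \min_\ell \sum_t \tilde c^{(t)}_\ell/\rho \;\le\; \frac{\ln N}{\eta} + \eta T \;=\; 2\sqrt{T\ln N}.
\]
The condition $T \ge \ln N$ ensures $\eta \le 1$, so the inequality $e^{-x} \le 1 - x + x^2$ used in that analysis is valid.

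Multiplying by $\rho$ and adding back the offsets yields $\sum_t f^{(t)}(x^{(t)}) \le \min_\ell \sum_t f^{(t)}(x_\ell^{(t)}) + 2\rho\sqrt{T\ln N}$. Substituting the decomposition of each expert's cumulative loss gives the theorem.

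The main subtlety is that the $\mathcal{A}_\ell$ must be updated with the true loss at their own iterates, not at the played point. The trajectories are then independent of the meta-weights, so the oblivious-adversary assumption makes expert losses fixed sequences and Hedge applies verbatim. The rest of the argument is routine.
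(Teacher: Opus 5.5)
Your proposal is correct and follows essentially the paper's proof: $N$ parallel OMD copies, each updated with the true gradient at its own iterate, are aggregated by Hedge; the algorithm plays the $p$-weighted convex combination, applies Jensen, and runs the potential argument with $\varepsilon=\sqrt{\ln N/T}$. The one cosmetic difference is the offset. The paper's analysis subtracts the best expert's loss $f^{(t)}(X_*^{(t)})$, so the shifted losses lie in $[-\rho,\rho]$, and that is where $\varepsilon\le 1$ is genuinely needed for $e^u\le 1+u+u^2$. Your offset $\inf_z f^{(t)}(z)$ gives nonnegative losses, and since Hedge is invariant to per-round offsets, the offset is never computed, so your fallback remark about computability is unnecessary.
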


To apply this to random block norms, we first observe that there are effectively $1 + \log_2 d$ block norms corresponding to $n \in \{2^0, 2^1, \dots, 2^{\log_2 d}\}$, allowing constant factor increase in regret.
Therefore, using $|M| = O(\log d)$ in the above theorem, we get the following corollary:

\begin{restatable}[Learning block norms online]{corollary}{combiningBlockNorms}\label{thm: mirror-weights-norms}
    There exists an algorithm for online convex optimization (\gls{oco})  that given a convex body $\convexbody \subseteq \{x \in \R^d: \|x\|_1 \le 1\}$ that lies within the $L_1$ norm ball in $\R^d$, and any sequence of online loss functions $f^{(1)}, \ldots, f^{(T)}$ with bounded gradients achieves regret at most
    \[
        \tfrac{\E[\regret(T)]}{\sqrt{T}} = O(\sqrt{\ln \ln d}) \cdot  \min_{n \in [d]} D_n G_n,
    \]
    as long as $T \ge 4 \ln \ln d$, where $G_n = \max_{t\in[T]}\big(\E[\sup_{x\in\convexbody}\|\nabla f^{(t)}(x)\|_{[n]}^{*2}]\big)^{1/2}$ is the gradient bound for the $n$th random block norm.
\end{restatable}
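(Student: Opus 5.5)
Our plan is to apply \cref{thm: mirror-weights} to a portfolio of $N = 1 + \lfloor \log_2 d\rfloor$ mirror maps, one per block count $n \in \{2^0, 2^1, \dots, 2^{\lfloor\log_2 d\rfloor}\}$ (together with $n=d$ if $d$ is not a power of two). For each such $n$ we fix the sampled random partition and use the normalized mirror map $h_{n,\rho_n(\convexbody)}$ with its standard step size. Each base OMD instance then satisfies $\E[\regret_n(T)] \le 2 D_n G_n \sqrt{T}$, with the expectation taken over the random partition. The Hedge meta-algorithm runs these $N$ copies in parallel; since the adversary is oblivious, the partitions can be drawn at time $0$. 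Taking expectations in \cref{thm: mirror-weights} and using $\E\min_\ell \le \min_\ell \E$, we get
\[
\E[\regret(T)] \le \min_{k} 2 D_{2^k} G_{2^k}\sqrt{T} + 2\rho\sqrt{T\ln N}.
\]

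We then bound the two terms separately.

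\textbf{The additive term.} Since $\convexbody$ lies in the $L_1$ unit ball and the gradients lie in $[-1,1]^d$, for any $x,z \in \convexbody$,
\[
f(x)-f(z) \le \|\nabla f\|_\infty \|x-z\|_1 \le 2,
\]
so $\rho = 2$. Because $N = O(\log d)$, we have $\sqrt{\ln N} = O(\sqrt{\ln\ln d})$, and the condition $T \ge \ln N$ follows from $T \ge 4\ln\ln d$ after adjusting constants. It remains to show $\min_n D_n G_n = \Omega(1)$, so that this additive term is $O(\sqrt{\ln\ln d})\min_n D_nG_n\sqrt T$. We expect this to follow from an intrinsic lower bound on $D_n G_n$ via strong convexity: $D_n$ is at least of order the block-norm radius of $\convexbody$, and the block norm and its dual pair to give $G_n \cdot(\text{radius}) \gtrsim$ the width of $\convexbody$ in gradient directions. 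If such a bound is unavailable for degenerate $\convexbody$, we would instead replace $\rho$ by the loss range and argue scale-invariance, i.e.\ normalize so that the range is comparable to $\min_n D_nG_n$. This normalization is the step we expect to be the main obstacle, and we would handle it by bounding $\rho \le \min_n \sup_{x,z}\|x-z\|_{[n]}\,\|\nabla f\|^*_{[n]} \lesssim D_n G_n$, using the fact that $1$-strong convexity gives $\|x-x^{(1)}\|_{[n]} \le \sqrt{2B}\le \sqrt2 D_n$ (in expectation via Cauchy--Schwarz).

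\textbf{Restricting to powers of two.} We must show that for every $n \in [d]$ there is a dyadic $n' \in [n/2, n]$ with $D_{n'}G_{n'} = O(D_nG_n)$. For the gradient factor, merging blocks only decreases the dual norm up to constants: $\|g\|^*_{[n]} = \max_j \|g_{B_j}\|_2$, and coarsening from $n$ to $n'\ge n/2$ blocks changes the typical maximum block mass by at most a constant factor, which follows from the concentration estimates of \cref{lem: sparse-vector-block-variance-bound}. For the diameter factor we use \cref{lem: block-norm-diameter-bound-general}, $D_{n'} = O(\rho_{n'}\sqrt{\ln n'})$, together with $\rho_{n'} \le \sqrt{2}\,\rho_n$. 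The latter holds because any block of an $n'$-partition is a union of at most about two blocks of a compatible $n$-partition, and $\|a\|_2+\|b\|_2 \le \sqrt2\,\|(a,b)\|_2$ in the reverse direction suitably applied; since $\rho_n$ is a supremum over all equal partitions, we may choose compatible ones. Combining these gives the constant-factor loss and completes the corollary.
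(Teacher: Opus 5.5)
Your skeleton (Hedge over a dyadic portfolio, expectations over the partitions, $\E\min\le\min\E$) and your \emph{fallback} bound on $\rho$ are exactly the paper's proof. For a fixed partition, strong convexity and the triangle inequality give $\|x-z\|_{[\mathcal B]}\le 2\sqrt2\,D_{\mathcal B}$, so $f^{(t)}(x)-f^{(t)}(z)\le 2\sqrt2\,D_{\mathcal B}G_{\mathcal B,t}$. Since the left side does not depend on the partition, Cauchy--Schwarz gives $\rho\le 2\sqrt2\,D_nG_n$ for every $n$.

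This step is not optional. Your primary route ($\rho=2$ and $\min_nD_nG_n=\Omega(1)$) fails: $\epsilon$-scaled losses make every $G_n$ of order $\epsilon$, and ``$D_n\gtrsim$ block-norm radius'' fails for a short body far from the origin. Hedge must therefore be run with $\rho$ equal to the actual loss range. Also, the standard step size presupposes $G_n$, which depends on the unknown losses. The paper instead adds $O(\log d)$ geometrically spaced step sizes per block count, which keeps $\ln N=O(\ln\ln d)$.

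The genuine gap is the reduction to powers of two, which the paper takes from its portfolio discussion without proof. Your diameter step combines $D_{n'}=O(\rho_{n'}\sqrt{\ln n'})$ with $\rho_{n'}\le\sqrt2\rho_n$, but that yields only $D_{n'}=O(\rho_n\sqrt{\ln n})$. \cref{lem: block-norm-diameter-bound-general} is one-sided, and without a lower bound $D_n=\Omega(\rho_n\sqrt{\ln n})$ you have only proved the corollary with $\rho_n\sqrt{\ln n}$ in place of $D_n$.

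Your intermediate claim is in fact false. Let $d$ be a power of two and $n=d-1$, so $d/2$ is the only dyadic point in $[n/2,n]$. Take $x^{(1)}=\frac1{2d}\mathbf 1_d$, $\convexbody=\{x^{(1)}+s\mathbf e_1:|s|\le r\}$ with $r\ll1/d$, and $f^{(t)}(x)=-x_1$, so $G_m=1$ for all $m$. Work to second order:
\begin{itemize}
\item With $d-1$ blocks, coordinate $1$ is a singleton with probability $1-2/d$. There the curvature along $\mathbf e_1$ is purely radial, $\frac{p-1}{\gamma}(\rho/x^{(1)}_1)^{2-p}\approx d$, so $D_{d-1}^2\approx r^2d/2$.
\item With $d/2$ blocks, coordinate $1$ always lies in a pair. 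Then $\mathbf e_1$ has a tangential component carrying the full factor $1/\gamma_{d/2}\approx e\ln d$, and $D_{d/2}^2\approx r^2d\ln(d/2)/8$.
\end{itemize}
Hence $D_{d/2}G_{d/2}/(D_{d-1}G_{d-1})\approx\frac12\sqrt{\ln(d/2)}\to\infty$. The grid point $n=d$ rescues the corollary here, but $D_n$ is not monotone in $n$ and your lemma fails.

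For the gradient factor, \cref{lem: sparse-vector-block-variance-bound} only upper-bounds $G_n(s)$ for sparse vectors; it cannot compare the actual $G_{n'}$ with $G_n$ for arbitrary losses. What works is a coupling in which both partitions are consecutive intervals of one random permutation. Then each $n'$-block meets at most four $n$-blocks, giving $G_{n'}\le2G_n$. No analogous argument controls the Bregman diameters, and that two-sided comparison of $D_{n'}$ with $D_n$ is the missing ingredient.
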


\paragraph{Extension to stochastic losses.}
We show that our results extend naturally to \emph{unbounded} stochastic losses, as long as the second moments of the gradients are bounded.
This relies on the well-known fact that the \gls{omd} regret bound \cref{eqn: regret-bound-general} also holds for stochastic losses with bounded second moments (see \cref{sec: mirror-maps-omd}; \cite{bubeck_convex_2015}).
Accordingly, for each loss function $f$, define
\[
    \sigma_f^2 := \max_{x \in \convexbody} \E\Big[\max_{i \in [d]} \left(\nabla_i f(x)\right)^2\Big], \qquad \sigma^2 := \max_{\mathrm{loss \ function\ } f} \sigma_f^2,
\]
and suppose that the random loss gradients are always $s$-sparse.
Then, as long as the losses are generated independently of the random partition (which is drawn once at $t = 0$), conditioning on the realization of the gradient and applying \cref{lem: sparse-vector-block-variance-bound} to its $0$-$1$ support vector gives $\E\big[\big(\|\nabla f(x)\|^*_{[n]}\big)^2\big] \le 9\sigma^2 \max\left\{\tfrac{s}{n}, \ln n\right\}$ for every fixed $x \in \convexbody$, and consequently our regret upper bounds using block norms continue to hold, with the expectation now taken over both the random partition and the realization of the losses.

\subsection{Related work}\label{sec: mirror-maps-related-work}

The study of \gls{oco} and its canonical algorithms builds on early work on weighted-majority and exponentiated-gradient methods \cite{littlestone1994weighted, kivinen1997exponentiated, cesa1997use}, Zinkevich\textquotesingle s analysis of Online Projected Gradient Descent \cite{zinkevich_online_2003}, and the mirror-descent framework \cite{nemirovski1983problem}; see \cite{shalev-shwartz_online_2012, hazan_introduction_2016} for accounts of these developments.
For most standard convex sets, these two mirror maps -- Euclidean ($h(x) = \frac{1}{2}\|x\|_2^2$) and entropic ($h(x) = \sum_i x_i \log x_i$, or an equivalent proxy) -- have long been viewed as the two complementary geometries: Euclidean for the hypercube and entropic for the probability simplex.
Both achieve minimax-optimal regret of order $O(f(d)\sqrt{T})$, with dimension dependence $f(d)$ varying by the setting.

It is known that other mirror maps can achieve better asymptotic regret rates in certain \gls{oco} instances.
For example, Kwon and Perchet \cite{kwon2016gains} show that using the appropriate $L_p$ norm mirror map achieves regret $O(\sqrt{(\ln s) T})$ for sparse gains over the probability simplex; the same bound remains valid for sparse losses, improving over the standard \gls{opgd} bound $O(\sqrt{sT})$ and \gls{oeg} bound $O(\sqrt{(\ln d)T})$ when $s = \ln d$.
For nonnegative sparse losses, they also establish the sharper minimax rate $\Theta(\sqrt{Ts\ln(d)/d})$ in their asymptotic regime.
To the best of our knowledge, we are the first to show polynomial in dimension improvement in regret over both \gls{opgd} and \gls{oeg} (or an equivalent proxy).
The lower bounds in \cite{kwon2016gains} distinguish the minimax rates for sparse gains and sparse nonnegative losses; we construct a single instance where both Euclidean (\gls{opgd}) and entropic (\gls{oeg}) mirror maps with standard step sizes are simultaneously suboptimal.

In contrast, we establish the first explicit, polynomial-in-dimension separation in realized regret between a block-norm mirror map and \emph{both} the Euclidean mirror map and the entropy proxy with standard step sizes, simultaneously on a single instance over a simple convex polytope not contained in the simplex, to the best of our knowledge.
We construct a natural convex body, $P = \mathrm{conv}(\simplex_d, A\mathbf{1})$, and an oblivious linear-loss sequence for which OPGD (Euclidean) and the entropy proxy (mirror map for $d$th block norm) both suffer regret scaling as $\widetilde \Omega(d^{1/4}\sqrt{T})$, while OMD with a block-norm mirror map tailored to the sparsity structure achieves $\widetilde{O}(\sqrt{T})$.
Individual block norms are described for (offline) Mirror Descent in Ben-Tal and Nemirovski\textquotesingle s lecture notes \cite{ben-tal_nemirovski_lecture_notes_2025} as alternatives to standard Euclidean geometry.
Mixed-norm regularization has also been studied within composite objective mirror descent \cite{duchi2010composite}; our focus is instead on regret improvements obtained by choosing block-norm mirror maps.

AdaGrad \cite{duchi2011adaptive} and its variants restrict to a general Euclidean mirror map, but update it using a (pseudo) second-order update.
In particular, diagonal AdaGrad achieves improved regret when non-zero gradient is concentrated in a small number of coordinates.
Our block-norm methods exploit a different structure: sparsity of each loss gradient.
For example, consider linear losses with binary $s$-sparse gradients on the probability simplex where each coordinate is active on exactly $sT/d$ rounds.
The standard diagonal AdaGrad regret bound \cite[Corollary~1]{duchi2011adaptive} evaluates to $O(\sqrt{dsT})$, whereas our bound  using OMD$_s$ is $O(\sqrt{T}\ln s)$.

The problem of finding the optimal mirror map for a given \gls{oco} instance has also been considered: Gatmiry \emph{et al.} \cite{gatmiry2025computing} give a construction that, given a convex, centrally symmetric feasible set $\convexbody$ and a convex, centrally symmetric gradient set $\mathcal{G}$, obtains a near-optimal regularizer as the solution to a finite optimization problem.
This optimization problem can be solved (in exponential time), yielding a regularizer whose Follow-the-Regularized-Leader algorithm has worst-case regret within a universal constant factor of the minimax-optimal rate.
Our focus is on explicit, efficiently specified block-norm mirror maps and their regret improvements for sparse losses, including on feasible sets that are not centrally symmetric.
Cheng \emph{et al.} \cite{cheng2019geometry} characterize when Euclidean methods (mirror descent with any quadratic mirror map, including diagonal preconditioners) are optimal for given convex set and gradient set, whereas we quantify improvements for sparse gradient sets using block norms.

Parameter-free and adaptive algorithms for online convex optimization use different mechanisms: AdaHedge \cite{erven2011adaptive} adapts the learning rate using the mixability gap, MetaGrad \cite{van2016metagrad} aggregates over learning rates using surrogate losses and adaptive quadratic geometry, and the coin-betting algorithms of Orabona and P\'al \cite{Orabona2016} avoid explicit learning-rate tuning.
These methods adapt within their respective algorithmic frameworks, rather than explicitly searching over a prescribed portfolio of distinct mirror maps.
In contrast, our method uses Hedge algorithm across multiple \emph{distinct} mirror maps, enabling adaptation not only over step sizes but also over geometries.

\section{Preliminaries}\label{sec: block-norms-preliminaries}

In this section, we give preliminaries and establish some notation.
The \gls{omd} algorithm is defined by the update rule in \cref{eqn: omd-update}, which describes the choice of played point $x^{(t + 1)}$; see \cref{sec: mirror-maps-omd} for details on \gls{omd}.

\subsection{Block norms, OMD, and randomization}

Given arbitrary blocks $\mathcal{B} = \{B_1, \dots, B_n\}$ that partition the coordinate set $[d]$, then the corresponding block norm is defined as
\begin{equation}\label{eqn: block-norm-definition}
    \|x\|_{[\mathcal{B}]} := \sum_{j = 1}^n \|x_{B_j}\|_2 = \sum_{j = 1}^n \sqrt{\sum_{i \in B_j} x_i^2}.
\end{equation}
The corresponding norm ball of radius $\rho \ge 0$ is denoted $N_{\mathcal{B}, \rho} := \{x \in \R^d: \|x\|_{[\mathcal{B}]} \le \rho\}$.

It is easy to check that the dual norm is the \emph{maximum} $L_2$ norm over blocks:
\begin{equation}\label{eqn: block-norm-dual}
    \|g\|^*_{[\mathcal{B}]} := \max_{j = 1}^n \|g_{B_j}\|_2.
\end{equation}

Ben-Tal and Nemirovski\textquotesingle s lecture notes \cite{ben-tal_nemirovski_lecture_notes_2025} give a mirror map $h_{\mathcal{B}}$ that is $1$-strongly convex with respect to the block norm $\|x\|_{[\mathcal{B}]} := \sum_{j = 1}^n \|x_{B_j}\|_2$ in its unit norm ball:
\begin{theorem}[\cite{ben-tal_nemirovski_lecture_notes_2025}, \S 5.3.3.3, Eq.~(5.3.9)]\label{thm: block-norms-dgf}
    Given positive integers $n \le d$, define
    \begin{align*}
        \gamma_n =
        \begin{cases}
            1 & \text{if} \ n = 1, \\
            \frac{1}{2} & \text{if} \ n = 2, \\
            \frac{1}{e\ln n} & \text{if} \ n > 2,
        \end{cases}
        && p_n &=
        \begin{cases}
            2 & \text{if} \ n \le 2, \\
            1 + \frac{1}{\ln n} & \text{if} \ n > 2.
        \end{cases}
    \end{align*}
    For $n$ blocks $\mathcal{B} = \{B_1, \ldots, B_n\}$ that partition $[d]$, define
    \begin{equation}\label{eqn: block-norm-strongly-convex-function}
        h_{\mathcal{B}}(x) := \frac{1}{\gamma_n p_n}\sum_{j = 1}^n \|x_{B_j}\|_2^{p_n}.
    \end{equation}
    Then $h_{\mathcal{B}}$ is $1$-strongly convex with respect to the block norm corresponding to $\mathcal{B}$ over the unit norm ball $N_{\mathcal{B}, 1} = \{x: \|x\|_{[\mathcal{B}]} \le 1\}$.
\end{theorem}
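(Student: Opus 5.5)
The plan is to verify strong convexity by a second-order argument: show that for every $x$ in the interior of $N_{\mathcal{B},1}$ (where $h_{\mathcal{B}}$ is twice differentiable away from points with some $x_{B_j}=0$) and every direction $u$,
\[
    u^\top \nabla^2 h_{\mathcal{B}}(x)\, u \;\ge\; \|u\|_{[\mathcal{B}]}^2 .
\]
The points where some block vanishes are handled at the end by a limiting/continuity argument, or by smoothing $\|x_{B_j}\|_2$ to $\sqrt{\|x_{B_j}\|_2^2+\epsilon}$ and letting $\epsilon\to 0$. By convexity of the ball and the integral form of Taylor's theorem along segments, this Hessian inequality yields $h(y)\ge h(x)+\nabla h(x)^\top(y-x)+\tfrac12\|y-x\|_{[\mathcal{B}]}^2$.

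\textbf{Step 1: the Hessian of one block.} For $\phi(v)=\|v\|_2^{p}$ with $p\in(1,2]$, compute
\[
    \nabla^2\phi(v)=p\|v\|^{p-2}\Big(I+(p-2)\tfrac{vv^\top}{\|v\|^2}\Big)\succeq p(p-1)\|v\|^{p-2} I ,
\]
where the bound uses $p-2\le 0$ and $vv^\top\preceq\|v\|^2 I$. Summing over blocks gives
\[
    u^\top\nabla^2 h\, u \;\ge\; \frac{p-1}{\gamma_n}\sum_j r_j^{p-2}\|u_{B_j}\|_2^2 ,
\]
where $r_j=\|x_{B_j}\|_2$ and $\sum_j r_j\le 1$.

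\textbf{Step 2: Cauchy--Schwarz.} Write
\[
    \|u\|_{[\mathcal{B}]}^2=\Big(\sum_j r_j^{(2-p)/2}\cdot r_j^{(p-2)/2}\|u_{B_j}\|\Big)^2\le\Big(\sum_j r_j^{2-p}\Big)\Big(\sum_j r_j^{p-2}\|u_{B_j}\|^2\Big).
\]
It therefore suffices to show $\tfrac{p-1}{\gamma_n}\ge \max\sum_j r_j^{2-p}$ over $r\ge 0$ with $\sum_j r_j\le 1$. Since $t\mapsto t^{2-p}$ is concave, the maximum is attained at $r_j=1/n$, giving $n^{p-1}$.

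\textbf{Step 3: check constants.} For $n=1$ we have $p=2$, $\gamma=1$, and the requirement is $1\ge 1$. For $n=2$ we have $p=2$, $\gamma=1/2$, and the requirement is $2\ge 2$. For $n>2$ we have $p-1=1/\ln n$, so $n^{p-1}=e$, and $(p-1)/\gamma_n=e\ln n/\ln n=e$, so the requirement is $e\ge e$. This matches exactly, so the constants are tight.

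\textbf{Main obstacle.} The main obstacle is the non-smoothness at $r_j=0$ when $p<2$: there $r_j^{p-2}$ blows up, so the Hessian bound is only formal. I would handle it by proving the inequality for the smoothed $h_\epsilon$. Repeating Steps 1--2 with the smoothed radii $\tilde r_j=\sqrt{r_j^2+\epsilon}$ gives the bound with $\sum_j\tilde r_j\le 1+n\sqrt\epsilon$, hence strong convexity modulus $(1+n\sqrt\epsilon)^{-(2-p)}$. Letting $\epsilon\to0$ in the first-order strong-convexity inequality, with $h_\epsilon\to h$ and gradients converging wherever defined (or using subgradients), recovers the claim.
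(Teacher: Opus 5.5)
Your proof is correct. The blockwise Hessian bound $\frac{p_n-1}{\gamma_n}\sum_j r_j^{p_n-2}\|u_{B_j}\|_2^2$, the weighted Cauchy--Schwarz step, and the power-mean bound $\sum_j r_j^{2-p_n}\le n^{p_n-1}$ combine through the identity $(p_n-1)/\gamma_n=n^{p_n-1}$, which holds in all three cases, exactly as you say.

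Your smoothing with $\tilde r_j=\sqrt{r_j^2+\epsilon}$ also goes through, since the same eigenvalue bound holds because $\|v\|_2^2/\tilde r_j^2\le 1$. The limit step is clean: $p_n>1$ makes $h_{\mathcal B}$ continuously differentiable, and $\nabla h_\epsilon\to\nabla h_{\mathcal B}$ pointwise, including on vanishing blocks where both gradients are zero. So subgradients are not needed.

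The paper does not prove this statement; it imports it from Ben-Tal and Nemirovski. Their justification of this setup is the same second-order computation, so your route coincides with the cited source.
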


\begin{corollary}\label{cor: scaled-mirror-maps}
    Consider $n$ blocks $\mathcal{B} = \{B_1, \ldots, B_n\}$ that partition $[d]$ and a non-empty set $\convexbody \subseteq N_{\mathcal{B}, \rho}$ that lies in the corresponding norm ball of radius $\rho > 0$.
    Then, the mirror map $h_{\mathcal{B}, \rho}(x) := \rho^2 h_{\mathcal{B}}\big(\tfrac{x}{\rho}\big)$ is $1$-strongly convex with respect to $\|\cdot\|_{[\mathcal{B}]}$ over $\convexbody$.
\end{corollary}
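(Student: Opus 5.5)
The argument is a change of variables: strong convexity of $h_{\mathcal{B},\rho}$ with respect to $\|\cdot\|_{[\mathcal{B}]}$ on $\convexbody$ will be pulled back from strong convexity of $h_{\mathcal{B}}$ on the unit ball $N_{\mathcal{B},1}$, given by \cref{thm: block-norms-dgf}. We use the first-order characterization. A differentiable $h$ is $1$-strongly convex with respect to $\|\cdot\|$ on a convex set $S$ if, for all $x,y\in S$,
\[
h(x)\ge h(y)+\nabla h(y)^\top(x-y)+\tfrac12\|x-y\|^2 .
\]
Equivalently, $B_h(x\|y)\ge \tfrac12\|x-y\|^2$. If $\convexbody$ itself is not convex, it suffices to prove the claim on $N_{\mathcal{B},\rho}\supseteq\convexbody$, which is convex, and then restrict.

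First we note where the rescaled points live. If $x\in N_{\mathcal{B},\rho}$, then $x/\rho\in N_{\mathcal{B},1}$, because the block norm is positively homogeneous: $\|x/\rho\|_{[\mathcal{B}]}=\|x\|_{[\mathcal{B}]}/\rho\le 1$. Next, by the chain rule,
\[
\nabla h_{\mathcal{B},\rho}(y)=\rho^2\cdot\tfrac1\rho\nabla h_{\mathcal{B}}(y/\rho)=\rho\,\nabla h_{\mathcal{B}}(y/\rho).
\]
Substituting into the Bregman divergence and writing $u=x/\rho$, $v=y/\rho$ gives
\[
B_{h_{\mathcal{B},\rho}}(x\|y)=\rho^2\big[h_{\mathcal{B}}(u)-h_{\mathcal{B}}(v)-\nabla h_{\mathcal{B}}(v)^\top(u-v)\big]=\rho^2 B_{h_{\mathcal{B}}}(u\|v).
\]
Since $u,v\in N_{\mathcal{B},1}$, \cref{thm: block-norms-dgf} gives $B_{h_{\mathcal{B}}}(u\|v)\ge\tfrac12\|u-v\|_{[\mathcal{B}]}^2$. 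Homogeneity again yields $\|u-v\|_{[\mathcal{B}]}^2=\|x-y\|_{[\mathcal{B}]}^2/\rho^2$. Combining,
\[
B_{h_{\mathcal{B},\rho}}(x\|y)\ge\tfrac12\|x-y\|_{[\mathcal{B}]}^2 ,
\]
which is the claim.

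The only delicate point is differentiability. For $n>2$ we have $p_n\in(1,2)$, so $h_{\mathcal{B}}$ is $C^1$ but not $C^2$ at points where some block $x_{B_j}$ vanishes. Working with the first-order (Bregman) definition of strong convexity avoids any Hessian argument, and the scaling identity above holds pointwise wherever the gradient exists. Apart from this, the proof is routine.
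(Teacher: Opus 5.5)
Your proposal is correct and follows the paper's own proof essentially verbatim: the chain-rule identity $B_{h_{\mathcal{B},\rho}}(x\|y)=\rho^2 B_{h_{\mathcal{B}}}(x/\rho\,\|\,y/\rho)$, followed by strong convexity of $h_{\mathcal{B}}$ on $N_{\mathcal{B},1}$ and homogeneity of the block norm. Your remarks on the $C^1$ regularity of $h_{\mathcal{B}}$ and on passing to the convex set $N_{\mathcal{B},\rho}\supseteq\convexbody$ are sound refinements that the paper leaves implicit.
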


\begin{proof}
    For $x,z\in\convexbody\subseteq N_{\mathcal B,\rho}$, the chain rule and strong convexity on $N_{\mathcal B,1}$ give
    \begin{align*}
        B_{h_{\mathcal B,\rho}}(x\|z)
        &=\rho^2 B_{h_{\mathcal B}}(x/\rho\|z/\rho) \ge\frac{\rho^2}{2}\|(x-z)/\rho\|_{[\mathcal B]}^2
        =\frac12\|x-z\|_{[\mathcal B]}^2. \qedhere
    \end{align*}
\end{proof}

For a nonempty convex set $\convexbody \subseteq \R^d$, define $\rho_{n}(\convexbody)$ as the maximum radius of $\convexbody$ under any arbitrary equal-size\footnote{For simplicity, we use the term `equal-size' even when $d$ is not divisible by $n$, where `equal-size' is shorthand for blocks with size difference at most $1$.}
block norm:
\begin{equation}\label{eqn: block-norm-radius}
    \rho_n(\convexbody) := \max_{\substack{\textup{equal-size blocks} \\ \mathcal{B} = (B_1, \dots, B_n)}} \sup_{x \in \convexbody} \|x\|_{[\mathcal{B}]}.
\end{equation}
Then, for any equal-size blocks, $\mathcal{B} := \{B_1, \ldots, B_n\}$, the above result implies that $h_{\mathcal{B}, \rho_n(\convexbody)}$ is $1$-strongly convex with respect to $\|\cdot\|_{[\mathcal{B}]}$ on $\convexbody$.

\paragraph{OMD with random block norms.}
We develop a random variant of \gls{omd} with block norms as follows: for $n \le d$, consider an equal-size partition $\mathcal{B}$ of coordinates $[d]$, chosen uniformly at random \emph{a priori} (before the first round), independently of the loss sequence, and retained throughout the algorithm.
We denote by $\|\cdot\|_{[n]}$ the corresponding (random) block norm.
Given radius $\rho > 0$, we denote the corresponding random mirror map $h_{n, \rho} = h_{\mathcal{B}, \rho}$.

Given \gls{oco} instance on convex set $\convexbody \subseteq \R^d$ and starting point $\var{x}{1}$, consider the \gls{omd} algorithm with the mirror map $h_{n, \rho}$ for $\rho := \rho_{n}(\convexbody)$.
This \gls{omd} variant is denoted \gls{omd}$_n$, and its regret is denoted $\regret_n$.
The corresponding Bregman divergence $B_{h_{n, \rho}}(\cdot \|\cdot)$ is denoted by $B_{n, \rho}(\cdot \|\cdot)$ for brevity.

Define the root-mean-square diameter and per-round gradient bound by
\begin{align}
    D_n &:= \left(\E_{\mathcal B}\left[\max_{z\in\convexbody}
    B_{n,\rho}(z\|x^{(1)})\right]\right)^{1/2}, \label{eqn: defn-block-norm-diameter}\\
    G_n&:=\max_{t\in[T]}\left(\E_{\mathcal B}\left[
    \sup_{x\in\convexbody}\|\nabla f^{(t)}(x)\|_{[n]}^{*2}\right]\right)^{1/2}. \label{eqn: defn-block-norm-gradient-bound}
\end{align}

Since $h_{n,\rho}$ is $1$-strongly convex with respect to $\|\cdot\|_{[n]}$ on $\convexbody$, the general \gls{omd} regret bound, with step size $\eta=D_n/(G_n\sqrt T)$, implies
\begin{equation}\label{eqn: block-norm-omd-regret-bound}
    \E[\regret_n(T)]
    \le \frac{D_n^2}{\eta}+\frac{\eta}{2}\sum_{t=1}^T
    \E_{\mathcal B}\!\left[\big(\|\nabla f^{(t)}(x^{(t)})\|_{[n]}^{*}\big)^{2}\right]
    \le 2D_nG_n\sqrt T.
\end{equation}
Replacing $D_n$ and $G_n$ throughout by positive deterministic upper bounds gives the corresponding regret guarantee.
The expectations above are over the random partition, for a fixed loss sequence.
Under \cref{assmp: loss-functions}, the uniform bounds established below also apply to random loss sequences independent of the partition, by conditioning on the losses and choosing the step size using these deterministic bounds.

\paragraph{Euclidean and entropic geometries.}

When the number of blocks $n = 1$ or $n = d$, we recover deterministic blocks.
For $n = 1$, we recover the Euclidean/$L_2$ geometry exactly, and the algorithm reduces to \gls{opgd}.
Therefore, we interchangeably use \gls{opgd} and \gls{omd}$_1$, $D_1$ and $D_\euc$, $G_1$ and $G_\euc$ etc.

Now consider $n = d$, where the block norm reduces to the $L_1$ norm.
On the probability simplex $\simplex_d := \{x \ge 0: \sum_i x_i = 1\}$, the corresponding algorithm OMD$_d$ is a proxy for \gls{oeg}: with starting point $\var{x}{1} = \tfrac{1}{d} \mathbf{1}_d$, its worst-case Bregman divergence from this starting point is $\Theta(\ln d)$, and its standard \gls{omd} regret bound agrees with that of \gls{oeg} up to a universal constant.
For brevity, we interchangeably use \gls{oeg} and \gls{omd}$_d$, $D_d$ and $D_\ent$, $G_d$ and $G_\ent$ etc.

\paragraph{Concentration for negatively associated random variables.}

We will use the following concentration inequality that generalizes concentration around mean results for independent random variables to the broader class of negatively associated random variables:

\begin{lemma}[Bernstein inequality \cite{BertailClemencon2019}, Theorem 2]\label{lem: bernstein}
    Let $X_1, \ldots, X_N$ be negatively associated zero-mean random variables with $|X_i| \le 1$ for all $i \in [N]$ with probability $1$.
    Let $z_1, \ldots, z_N$ be nonnegative constants.
    Then, for all $\delta > 0$,
    \begin{equation*}
        \Pr\Big(\sum_{i \in [N]} z_i X_i \ge \delta\Big) \le \exp\Big(-\frac{3\delta^2}{2\delta \max_{i \in [N]} z_i + 6 \sum_{i \in [N]}z_i^2 \E[X_i^2]}\Big).
    \end{equation*}
\end{lemma}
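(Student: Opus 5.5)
The plan is the standard Chernoff--Cram\'er argument. Negative association will be used only to factor the moment generating function of the weighted sum into an upper bound by the product of the individual moment generating functions. After that, the proof is the classical Bernstein computation for bounded, zero-mean summands.

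\textbf{Step 1: Markov on the exponential.} Write $S := \sum_{i \in [N]} z_i X_i$, $b := \max_{i \in [N]} z_i$ and $v := \sum_{i \in [N]} z_i^2 \E[X_i^2]$. For any $\lambda \ge 0$, Markov's inequality gives $\Pr(S \ge \delta) \le e^{-\lambda \delta}\, \E[e^{\lambda S}]$.

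\textbf{Step 2: decoupling via negative association.} Since $\lambda \ge 0$ and $z_i \ge 0$, each map $x_i \mapsto e^{\lambda z_i x_i}$ is nonnegative and nondecreasing, and distinct factors depend on disjoint coordinates. Negative association gives $\E[f(X_A) g(X_{A'})] \le \E[f(X_A)]\,\E[g(X_{A'})]$ for nondecreasing functions $f, g$ on disjoint index sets $A, A'$. A product of nonnegative nondecreasing functions is again nondecreasing, so we may take $A = \{1, \dots, k-1\}$ and $A' = \{k\}$ and induct on $k$. This yields
\[
\E[e^{\lambda S}] \le \prod_{i \in [N]} \E[e^{\lambda z_i X_i}].
\]
This is the only place where the dependence structure enters. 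I expect it to be the main point to check carefully: the factors must be monotone in the \emph{same} direction, and this is exactly why the statement requires $z_i \ge 0$ and why the argument only bounds the upper tail.

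\textbf{Step 3: single-variable Bernstein bound.} Let $Y_i := z_i X_i$, so that $\E Y_i = 0$, $|Y_i| \le b$ and $\E Y_i^2 = z_i^2 \E[X_i^2]$. Expand $e^{u} = 1 + u + \sum_{k \ge 2} u^k/k!$ and use $\E|Y_i|^k \le b^{k-2}\E Y_i^2$ together with $k! \ge 2\cdot 3^{k-2}$. For $0 \le \lambda < 3/b$ this gives
\[
\E[e^{\lambda Y_i}] \le 1 + \frac{\lambda^2 \E Y_i^2}{2(1 - \lambda b/3)} \le \exp\Big(\frac{\lambda^2 \E Y_i^2}{2(1-\lambda b/3)}\Big).
\]
Multiplying over $i$ and combining with Steps 1 and 2,
\[
\Pr(S \ge \delta) \le \exp\Big(-\lambda \delta + \frac{\lambda^2 v}{2(1-\lambda b/3)}\Big).
\]

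\textbf{Step 4: choice of $\lambda$.} Choose $\lambda := \delta/(v + b\delta/3)$. This satisfies $\lambda < 3/b$, and $1 - \lambda b/3 = v/(v + b\delta/3)$. Substituting, the exponent simplifies to
\[
-\frac{\delta^2}{2(v + b\delta/3)} = -\frac{3\delta^2}{2\delta b + 6v},
\]
which is exactly the claimed bound. The degenerate case $v = 0$ forces $S = 0$ almost surely, since each $Y_i$ has zero mean and zero second moment. Then $\Pr(S \ge \delta) = 0$ for $\delta > 0$, and the bound holds trivially.
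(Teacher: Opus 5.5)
Your proof is correct. The paper gives no proof of its own and simply imports the lemma from \cite{BertailClemencon2019}; your argument is essentially the standard one behind that result. You use negative association with nonnegative weights to get $\E[e^{\lambda S}]\le\prod_i\E[e^{\lambda z_iX_i}]$, apply the moment bound $k!\ge 2\cdot 3^{k-2}$, and choose $\lambda=\delta/(v+b\delta/3)$, treating $v=0$ separately; this reproduces the stated exponent exactly and can stand in for the citation as a self-contained proof.
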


\section{Regret bounds for sparse losses}\label{sec: mirror-maps-better-regret}

In this section, we discuss regret improvements over Euclidean and entropic geometries using block norms.
First, we show that convex combinations of the corresponding mirror maps $h_\euc := \tfrac{1}{2}\|x\|_2^2$ and $h_\ent := \sum_i x_i \ln x_i$ do not improve regret over the simplex in \cref{app: convex-combination-is-not-useful}.
Then, we study block norms, and prove general gradient and diameter bounds in \cref{sec: gradient-and-diameter-bounds}.
In \cref{sec: gains-using-block-norms}, we prove \cref{thm: upper-bound-improvements} proving gains in regret bounds using block norms for common convex sets, and \cref{cor: sandwich-gain} which shows gains for convex sets sandwiched between other convex sets.
In \cref{sec: instances-with-realized-gains}, we prove \cref{thm: regret-improvement-using-block-norms} giving explicit \gls{oco} instances where large gains using block norms over the best of Euclidean and entropic geometries are realized.

\subsection{Convex combinations of classical geometries}\label{app: convex-combination-is-not-useful}

In this section, we consider mirror maps $h_\lambda := \lambda h_\euc + (1 - \lambda) h_\ent$ that are convex combinations of Euclidean ($h_\euc(x) := \tfrac{1}{2} \|x\|_2^2$) and entropic ($h_\ent(x) := \sum_{i \in [d]} x_i \ln x_i$) mirror maps.
We show that such mirror maps cannot yield better than a constant-factor improvement in regret bound over the best of \gls{opgd} and \gls{oeg} for $s$-sparse losses.

Recall that for the simplex $\simplex_d$, the Euclidean and entropic diameters are $D_\euc = \Theta(1)$ and $D_\ent := \Theta(\sqrt{\ln d})$ respectively, and the gradient bounds for $s$-sparse losses in the bounded box $\mathcal{G} := \{g \in [-1, 1]^d: \|g\|_0 \le s\}$ are $G_\euc = \max_{g \in \mathcal{G}} \|g\|_2 = \sqrt{s}$ and $G_\ent = \max_{g \in \mathcal{G}} \|g\|_\infty = 1$.

For any $\lambda \in [0, 1]$, the convex combination mirror $h_\lambda$ is $1$-strongly convex with respect to the composite norm $N_\lambda(x) := \sqrt{\lambda \|x\|_2^2 + (1 - \lambda) \|x\|^2_1}$.

Further, the diameter satisfies $D_\lambda^2 \ge \max(\lambda D_\euc^2, (1 - \lambda) D_\ent^2) \ge \tfrac{1}{2} (\lambda D_\euc^2 + (1 - \lambda) D_\ent^2)$.
The gradient bound for $s$-sparse losses is then expressed using the dual norm $N^*_\lambda(\cdot)$ as
\begin{equation}\label{eqn: gradient-bound-conv-comb}
    G_\lambda := \max_{g \in [-1, 1]^d: \|g\|_0 \le s} N^*_\lambda(g) = \sqrt{\tfrac{s}{\lambda + (1 - \lambda) s}}.
\end{equation}

Assuming this bound, we get
\begin{align*}
    2 D_\lambda^2 G_\lambda^2 &\ge \tfrac{\lambda D_\euc^2 s + (1 - \lambda) D_\ent^2 s}{\lambda + (1 - \lambda) s} = \tfrac{\lambda D_\euc^2 G_\euc^2 + s(1 - \lambda) D_\ent^2 G_\ent^2}{\lambda + s(1 - \lambda)} \ge \min\left(D_\euc^2 G_\euc^2, D_\ent^2 G_\ent^2\right),
\end{align*}
where the last inequality holds since $\tfrac{\alpha A + \beta B}{\alpha + \beta} \ge \min(A, B)$ for $A, B, \alpha, \beta \ge 0$ (here $\alpha = \lambda$ and $\beta = s(1 - \lambda)$).
It remains to prove \cref{eqn: gradient-bound-conv-comb}:

\begin{proof}[Proof of \cref{eqn: gradient-bound-conv-comb}]
    The case $g=0$ is immediate.
    Fix nonzero $g \in \mathcal{G}$ and let $T := \operatorname{supp}(g)$, so that $|T| \le s$ and $|g_i| \le 1$ for all $i \in T$.
    For $x \in \R^d$, write $x_T$ for the restriction of $x$ to the coordinates in $T$.
    Since $\|x_T\|_2 \le \|x\|_2$ and $\|x_T\|_1 \le \|x\|_1$, we have $N_\lambda(x_T) \le N_\lambda(x)$, and by H\"older's inequality $\langle g, x\rangle = \sum_{i \in T} g_i x_i \le \|x_T\|_1$.
    Therefore
    \[
        N^*_\lambda(g) = \sup_{x \ne 0} \frac{\langle g, x\rangle}{N_\lambda(x)}
        \le \sup_{x_T \ne 0} \frac{\|x_T\|_1}{N_\lambda(x_T)}
        = \sup_{y \in \R^T \setminus \{0\}} \frac{\|y\|_1}{\sqrt{\lambda \|y\|_2^2 + (1 - \lambda)\|y\|_1^2}}.
    \]
    By Cauchy-Schwarz, $\|y\|_1^2 \le |T| \, \|y\|_2^2 \le s \|y\|_2^2$ for every $y \in \R^T$, so
    \[
        \lambda \|y\|_2^2 + (1 - \lambda)\|y\|_1^2 \ge \left(\tfrac{\lambda}{s} + (1 - \lambda)\right)\|y\|_1^2,
    \]
    which yields $N^*_\lambda(g)^2 \le \left(\tfrac{\lambda}{s} + 1 - \lambda\right)^{-1} = \tfrac{s}{\lambda + (1 - \lambda) s}$.

    For the matching lower bound, choose any $T \subseteq [d]$ with $|T| = s$ and take $g = x = \mathbb{1}_T$, the indicator vector of $T$; then $g \in \mathcal{G}$.
    Since $\langle g, x\rangle = s$, $\|x\|_2^2 = s$, and $\|x\|_1^2 = s^2$,
    \[
        N^*_\lambda(g)^2 \ge \frac{\langle g, x\rangle^2}{\lambda \|x\|_2^2 + (1 - \lambda)\|x\|_1^2} = \frac{s^2}{\lambda s + (1 - \lambda) s^2} = \frac{s}{\lambda + (1 - \lambda) s}. \qedhere
    \]
\end{proof}

\subsection{Gradient and diameter bounds for block norms}\label{sec: gradient-and-diameter-bounds}

Consider an \gls{oco} instance over feasible set $\convexbody \subseteq \R^d$ with $s$-sparse losses \cref{assmp: loss-functions}, so that each loss function has a fixed containing support of size at most $s$ and gradients bounded in $[-1,1]^d$.
The bound uses Bernstein's concentration inequality (\cref{lem: bernstein}) for negatively associated random variables.

\begin{restatable}[Sparse gradient bound]{lemma}{sparseVectorBlockVarianceBound}\label{lem: sparse-vector-block-variance-bound}
    For $1\le s,n\le d$, define the uniform root-mean-square dual norm bound over fixed sparse vectors by
    \[
        G_n(s):=\sup_{\substack{c\in[-1,1]^d\\|\operatorname{supp}(c)|\le s}}
        \left(\E_{\mathcal B}\left[\|c\|_{[n]}^{*2}\right]\right)^{1/2},
    \]
    where $\mathcal B$ is a uniformly random equal-size $n$-partition of $[d]$.
    Then
    \[
        G_n(s)\le 3\max\left\{\sqrt{s/n},\sqrt{\ln n}\right\}.
    \]
\end{restatable}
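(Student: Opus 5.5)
First reduce to $0$-$1$ vectors. Fix $c\in[-1,1]^d$ with support $S:=\operatorname{supp}(c)$, $|S|\le s$. For any partition, $\|c_{B_j}\|_2^2=\sum_{i\in B_j\cap S}c_i^2\le |B_j\cap S|$. Therefore
\[
\|c\|_{[n]}^{*2}\le \max_{j\in[n]} Y_j,\qquad Y_j:=|B_j\cap S|.
\]
It suffices to show $\E[\max_j Y_j]\le 9\max\{s/n,\ln n\}$; by monotonicity we may take $|S|=s$ exactly, padding $S$ if needed. For $n=1$ we have $Y_1=s$ and the bound is immediate. For $n=d$, each $Y_j\le 1$ and the bound holds trivially since $9\max\{s/d,\ln d\}\ge 1$ once $d\ge 3$; small cases can be checked directly. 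So the main case is a general $n\ge 2$ with blocks of size $m\in\{\lfloor d/n\rfloor,\lceil d/n\rceil\}$.

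Next bound a single block by concentration. Fix a block $B_j$ and write $Y_j=\sum_{i\in S}\mathbf 1[i\in B_j]$. Since $B_j$ is a uniformly random subset of fixed size $m$, the indicators $\mathbf 1[i\in B_j]$ are those of sampling without replacement. They are therefore negatively associated, and so are their centered versions $X_i=\mathbf 1[i\in B_j]-m/d$, which satisfy $|X_i|\le 1$. The mean is $\mu:=\E[Y_j]=sm/d\le s\lceil d/n\rceil/d\le 2s/n$, and $\E[X_i^2]\le m/d$. Applying \cref{lem: bernstein} with all $z_i=1$ gives
\[
\Pr(Y_j\ge \mu+\delta)\le \exp\Big(-\frac{3\delta^2}{2\delta+6\mu}\Big).
\]

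Then take a union bound and integrate the tail. Set $L:=\max\{s/n,\ln n\}$. Using $\Pr(\max_j Y_j\ge\mu+\delta)\le n\exp(-3\delta^2/(2\delta+12L))$, we get
\[
\E[\max_j Y_j]\le \mu+\delta_0+\int_{\delta_0}^\infty n\,e^{-3\delta^2/(2\delta+12L)}\,d\delta.
\]
For $\delta\ge \delta_0:=cL$ with a suitable constant $c$ (around $5$), the exponent satisfies $3\delta^2/(2\delta+12L)\ge \delta/2+\ln n$, because $\ln n\le L$. The integral is then at most $\int_{\delta_0}^\infty e^{-\delta/2}\,d\delta\le 2e^{-\delta_0/2}$, which is an absolute constant at most $O(L)$ whenever $L\ge\ln 2$.

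The main obstacle is constant bookkeeping: showing $\mu+\delta_0+\text{(tail)}\le 9L$, which needs $2+c+\text{small}\le 9$. It also requires a separate check that the unequal block sizes and the rounding in $\mu\le 2s/n$ do not break this, and that the small-$n$ edge cases, where $\ln n<1$, are handled. A cleaner route is to choose $\delta_0$ via the inequality $3\delta^2\ge (2\delta+12L)(\ln n+\delta/3)$. If the constant $9$ turns out too tight, I would refine $\mu$ using $m\le d/n+1$, so that $\mu\le s/n+s/d\le s/n+1$, and absorb the extra $1$ into $L$.
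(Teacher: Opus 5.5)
Your proposal is correct and follows essentially the same route as the paper: bound $\|c\|_{[n]}^{*2}$ by the maximum block occupancy, apply the negative-association Bernstein inequality (\cref{lem: bernstein}) to each block using $\mu_j\le 2s/n$, and take a union bound over the $n$ blocks. The only difference is the last step. The paper uses the single threshold $\delta=4L$, so the union bound fails with probability at most $1/n$, and pays $s\cdot\frac1n\le L$ on that bad event via $\max_j Y_j\le s$, which gives $7L$ directly. Your tail integration also closes, provided you take $c\ge 2+\sqrt{10}\approx 5.16$ (at $c=5$ your exponent inequality fails when $\ln n=L$), and this yields a bound below $7.7L\le 9L$.
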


\begin{proof}
    Fix $c$ and a size-$s$ set $C$ containing its support.
    Write $N_j=|C\cap B_j|$.
    Since
    \[
        \|c\|_{[n]}^{*2}\le \max_j N_j,
    \]
    it suffices to bound the expected maximum occupancy.
    For $n=1$, this is at most $s$.
    Suppose $n\ge2$, and condition on the block sizes $b_j=|B_j|$, which differ by at most one.
    Put $M=\max\{s/n,\ln n\}$.
    For each fixed $j$, the indicators $Y_i=\mathbf1\{i\in B_j\}$, $i\in C$, are negatively associated, since $B_j$ is a uniform size-$b_j$ subset of $[d]$ \cite{joag1983negative}.
    Their centered versions satisfy
    \[
        |Y_i-b_j/d|\le1,\qquad
        \mu_j:=\E[N_j]=sb_j/d\le2s/n,\qquad
        \sum_{i\in C}\E[(Y_i-b_j/d)^2]\le\mu_j.
    \]
    Here $b_j\le\lceil d/n\rceil\le2d/n$.
    Applying \cref{lem: bernstein} with $\delta=4M$ gives
    \[
        \Pr(N_j\ge\mu_j+4M)
        \le\exp\left(-\frac{48M^2}{8M+6\mu_j}\right)
        \le e^{-12M/5}\le n^{-2}.
    \]
    A union bound and $\max_jN_j\le s$ therefore yield
    \[
        \E[\max_jN_j]\le2s/n+4M+s/n\le7M.
    \]
    These bounds hold for every allowed choice of block sizes, so they also hold without conditioning.
    Taking square roots and using $\sqrt7 < 3$ proves the claim uniformly over $c$.
    \qedhere
\end{proof}

Under \cref{assmp: loss-functions}, the bound also controls $G_n$ despite the partition dependence of the iterates: for each round $t$,
\[
    \sup_{x\in\convexbody}\|\nabla f^{(t)}(x)\|_{[n]}^{*2}
    \le \max_{B\in\mathcal B}|B\cap C_{f^{(t)}}|
    =\| \mathbf1_{C_{f^{(t)}}}\|_{[n]}^{*2}.
\]
The vector on the right is fixed independently of the partition, so taking expectations and then the maximum over $t$ gives $G_n\le G_n(s)$.

Next, we relate the root-mean-square diameter $D_n$ to the block norm radius of $\convexbody$.

\begin{restatable}[Diameter bound]{lemma}{blockNormDiameter}\label{lem: block-norm-diameter-bound-general}
    Consider a nonempty convex set $\convexbody$ with block norm radius $\rho:=\rho_n(\convexbody)$ as defined in \cref{eqn: block-norm-radius}, and a fixed starting point $x^{(1)}\in\convexbody$.
    Then, the root-mean-square diameter $D_n$ defined in \cref{eqn: defn-block-norm-diameter} satisfies
    \[
        D_n = O(\rho \sqrt{\ln n}).
    \]

\end{restatable}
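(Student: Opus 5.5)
The plan is to bound the Bregman divergence \emph{deterministically}, for every realization of the equal-size partition $\mathcal{B}$, by $O(\rho^2\ln n)$. Taking the expectation over $\mathcal{B}$ and then a square root then gives $D_n=O(\rho\sqrt{\ln n})$. Randomness plays no role here. The only fact about the partition I need is that, by definition \cref{eqn: block-norm-radius}, $\rho=\rho_n(\convexbody)$ dominates $\sup_{x\in\convexbody}\|x\|_{[\mathcal{B}]}$ for every equal-size partition. Hence $\convexbody\subseteq N_{\mathcal{B},\rho}$ for every realization, and both $z$ and $x^{(1)}$ lie in this ball.

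\textbf{Step 1 (rescaling).} By the definition in \cref{cor: scaled-mirror-maps},
\[
B_{n,\rho}(z\|x^{(1)})=\rho^2\,B_{h_{\mathcal{B}}}(u\|v),\qquad u=z/\rho,\; v=x^{(1)}/\rho,
\]
with $u,v\in N_{\mathcal{B},1}$. So it suffices to show $B_{h_{\mathcal{B}}}(u\|v)\le 3/\gamma_n$ for all $u,v$ in the unit block-norm ball.

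\textbf{Step 2 (three-term bound).} Write $p=p_n\in(1,2]$ and $\gamma=\gamma_n$, and expand
\[
B_{h_{\mathcal{B}}}(u\|v)=h_{\mathcal{B}}(u)-h_{\mathcal{B}}(v)+\nabla h_{\mathcal{B}}(v)^\top v-\nabla h_{\mathcal{B}}(v)^\top u .
\]
On block $B_j$ the gradient is $\nabla h_{\mathcal{B}}(v)_{B_j}=\gamma^{-1}\|v_{B_j}\|_2^{p-2}v_{B_j}$, interpreted as $0$ when $v_{B_j}=0$; this is legitimate because $p>1$, so the map is differentiable there. Each term is then bounded using $\|u_{B_j}\|_2,\|v_{B_j}\|_2\le1$, $p\ge1$, and $\sum_j\|u_{B_j}\|_2\le1$, $\sum_j\|v_{B_j}\|_2\le 1$:
\begin{itemize}
\item[(a)] $h_{\mathcal{B}}(u)\le \frac{1}{\gamma p}\sum_j\|u_{B_j}\|_2\le 1/\gamma$;
\item[(b)] $-h_{\mathcal{B}}(v)\le 0$;
\item[(c)] $\nabla h_{\mathcal{B}}(v)^\top v=\gamma^{-1}\sum_j\|v_{B_j}\|_2^{p}\le 1/\gamma$;
\item[(d)] by Cauchy--Schwarz blockwise and $\|v_{B_j}\|_2^{p-1}\le 1$, $|\nabla h_{\mathcal{B}}(v)^\top u|\le\gamma^{-1}\sum_j\|v_{B_j}\|_2^{p-1}\|u_{B_j}\|_2\le 1/\gamma$.
\end{itemize}
Summing gives $B_{h_{\mathcal{B}}}(u\|v)\le 3/\gamma_n$, which is $3e\ln n$ for $n>2$ and at most $6$ for $n\le2$.

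\textbf{Step 3 (conclusion).} The bound holds for every $z\in\convexbody$ and every partition, so $\max_{z}B_{n,\rho}(z\|x^{(1)})\le 3\rho^2/\gamma_n$ pointwise in $\mathcal{B}$. Taking the expectation and a square root yields $D_n\le\rho\sqrt{3/\gamma_n}=O(\rho\sqrt{\ln n})$.

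There is no serious obstacle; the only delicate points are bookkeeping. First, the case $n=1$ (and small $n$), where $\ln n$ vanishes. There the bound should be read as $O(\rho\sqrt{\max\{1,\ln n\}})$; concretely, for $n=1$ one has $B=\tfrac12\|z-x^{(1)}\|_2^2\le 2\rho^2$. Second, one must check that the gradient formula remains valid at blocks where $v_{B_j}=0$, which holds because $p_n>1$.
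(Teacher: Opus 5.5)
Your proof is correct and follows essentially the same route as the paper: a deterministic, partition-by-partition bound on the Bregman divergence using $\|u_{B_j}\|_2,\|v_{B_j}\|_2\le 1$ and blockwise Cauchy--Schwarz for the cross term, then averaging over $\mathcal{B}$. The paper merges your terms (b) and (c) via Euler's identity $\langle\nabla h(x),x\rangle=p\,h(x)$ to get the slightly sharper constant $2/\gamma_n$ instead of $3/\gamma_n$. Your caveat that the bound must be read as $O(\rho\sqrt{\max\{1,\ln n\}})$ at $n=1$ is a fair correction to the paper's use of $1/\gamma_n=O(\ln n)$.
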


\begin{proof}
    Denote the sampled equal-size blocks as $\mathcal{B} = \{B_1, \dots, B_n\}$ and the corresponding mirror map for radius $\rho > 0$ from \cref{cor: scaled-mirror-maps} as $h := h_{\mathcal B,\rho}$.
    Then, $D_n = \sqrt{\E_{\mathcal B} \max_{z \in \convexbody} B_h(z \| \var{x}{1})}$.

    Abbreviate the parameters $p=p_n$ and $\gamma=\gamma_n$ (defined in \cref{eqn: block-norm-strongly-convex-function}).
    Denote $x := \var{x}{1}$ for brevity.
    Recall that $\rho = \rho_n(\convexbody)$ is an upper bound on any equal-size block norm on $\convexbody$ \cref{eqn: block-norm-radius}.

    Then, from \cref{eqn: block-norm-strongly-convex-function} and \cref{cor: scaled-mirror-maps}, we get
    \[
        h(x) = \rho^2 \cdot \frac{1}{\gamma p}\sum_{j \in [n]}\| (1/\rho) x_{B_j}\|_2^p = \frac{\rho^{2 - p}}{\gamma p} \sum_{j \in [n]}\|x_{B_j}\|_2^p.
    \]
    It is easy to verify that $\langle\nabla h(x), x\rangle= p \cdot h(x)$.
    Therefore, denoting $x := \var{x}{1}$, we get
    \begin{align*}
        B_h(z \| x) &:= h(z) - h(x) - \langle \nabla h(x), z - x \rangle = h(z) + (p - 1) h(x) - \langle \nabla h(x), z \rangle.
    \end{align*}

    First, we bound $h(z) + (p - 1) h(x)$ by $\rho^2/\gamma$.
    Since $p \ge 1$ and $\|(1/\rho) x\|_{[\mathcal{B}]} \le 1$ for all $x \in \convexbody$, we get from the above that $h(x) = \rho^2 \cdot \frac{1}{\gamma p}\sum_{j \in [n]}\| (1/\rho) x_{B_j}\|_2^p \le \rho^2 \cdot \frac{1}{\gamma p}\sum_{j \in [n]}\| (1/\rho) x_{B_j}\|_2 = \tfrac{\rho}{\gamma p} \|x\|_{[\mathcal{B}]} \le \tfrac{\rho^2}{\gamma p}$.
    Therefore,
    \[
        h(z) + (p - 1) h(x) \le (1 + (p - 1)) \cdot \tfrac{\rho^2}{\gamma p} = \tfrac{\rho^2}{\gamma}.
    \]

    For the inner product term, applying the Cauchy-Schwarz inequality blockwise,
    \begin{align*}
        -\gamma \langle \nabla h(x), z \rangle
        &= - \rho^{2 - p} \sum_{j \in [n]} \|x_{B_j}\|_2^{p - 2}
        \langle x_{B_j}, z_{B_j}\rangle
        \\
        &\le \rho^{2 - p} \sum_{j \in [n]} \|x_{B_j}\|_2^{p - 1} \|z_{B_j}\|_2 = \rho^2 \sum_{j \in [n]} \|(1/\rho)x_{B_j}\|_2^{p - 1} \|(1/\rho)z_{B_j}\|_2.
    \end{align*}
    For any block $j$, we have $\|(1/\rho) x_{B_j}\|_2 \le \|(1/\rho) x\|_{[\mathcal{B}]} \le 1$.
    Since $p -1  \ge 0$, we get $\|(1/\rho)x_{B_j}\|_2^{p - 1} \le 1$.
    Therefore, $-\gamma \langle \nabla h(x), z \rangle \le \rho^2 \sum_{j \in [n]} \|(1/\rho)z_{B_j}\|_2 \le \rho^2$.

    Together, using $1/\gamma = O(\ln n)$, we get
    \[
        D_n = \sqrt{\E_{\mathcal B} \max_{z \in \convexbody} B_h(z \| \var{x}{1})} \le \sqrt{\tfrac{2}{\gamma} \rho^2} = O(\rho \sqrt{\ln n}). \quad \qedhere
    \]
\end{proof}

\subsection{Gains in regret using block norms}\label{sec: gains-using-block-norms}

We prove \cref{thm: upper-bound-improvements}, which shows up to $\poly(d)$ gains in regret for several families of convex bodies and corresponding sparsities, and \cref{cor: sandwich-gain}.

\cref{tab: gains} details these results.
For each convex set in the table except the probability simplex, we prove more general results for the larger class (e.g., all boxes, ellipsoids, $L_p$ norm balls etc), and then show that there is some member in the family that achieves $\poly(d)$ gain in regret using block norms.

\RegretUpperBoundImprovements*

Throughout this subsection, we compare uniform regret bounds over the sparse-loss class and abbreviate $G_n(s)$ by $G_n$.
All mirror maps use the radius $\rho_n(\convexbody)$ of their respective body.
The displayed logarithmic gain bounds are for $s\ge3$.

For each convex set $\convexbody \subseteq \R^d$, the proof strategy is as follows: we need to show that there is some $n \in [d]$ and some $s$ such that $D_n(\convexbody)G_n(s)\le\gamma^{-1}\min\{D_\euc G_\euc,D_\ent G_\ent\}$ for the target gain $\gamma$.
Recall that $D_\euc G_\euc = D_1 G_1$ and $D_\ent G_\ent = D_d G_d$.
Lower bounds for the Euclidean and entropic geometries are computed explicitly.

Upper bounds on $G_n$ follow from \cref{lem: sparse-vector-block-variance-bound}.
To upper bound $D_n$, we use \cref{lem: block-norm-diameter-bound-general} and then bound $\rho_n(\convexbody)$ uniformly over equal-size partitions by using the specific geometry of $\convexbody$.
We illustrate this for the probability simplex and for an axis-aligned ellipsoid next.
Proofs for other examples are deferred to \cref{app: block-norm-computations}.

\paragraph{Probability simplex $\simplex_d$.}
We show the following gain:
\begin{align*}
    \gain(\simplex_d, s) = \Omega\left(\tfrac{\min(\sqrt{s}, \sqrt{\ln d})}{\ln s}\right).
\end{align*}

In particular, for $s = \ln d$, $\gain(\simplex_d, s) = \Omega\left(\tfrac{\sqrt{\log d}}{\log \log d}\right)$.

To see this, note that for the starting point $\var{x}{1} = (1/d)\mathbf{1}_d$, the diameter bounds $D_d = D_\ent = \Theta(\sqrt{\ln d})$ and $D_1 = D_\euc = \Theta(1)$ follow by evaluating the corresponding divergences at the vertices:
\[
    D_1^2=\tfrac12\left(1-\frac1d\right),\qquad
    D_d^2=\frac{1-d^{1-p_d}}{\gamma_dp_d}
    =\frac{e-1}{p_d}\ln d.
\]
Indeed, each divergence is convex in its first argument, and the gradient of the mirror map at $(1/d)\mathbf1_d$ is constant across coordinates, so the linear term vanishes on the simplex.
For arbitrary $n \in [d]$, using \cref{lem: block-norm-diameter-bound-general}, we get $D_n=O(\rho_n(\simplex_d)\sqrt{\ln n})$.
Since $\|\cdot\|$ is convex, the maximum occurs at a vertex, and therefore, for every equal-size partition $\mathcal B$, $\max_{z\in\simplex_d}\|z\|_{[\mathcal B]}=\max_{i\in[d]}\|\mathbf e_i\|_{[\mathcal B]}=1$.
Hence $\rho_n(\simplex_d)=1$, so that $D_n = O(\sqrt{\ln n})$.

Consequently, choosing $n = s$, and using \cref{lem: sparse-vector-block-variance-bound}, we get
\[
    \gain(\simplex_d, s) \ge \tfrac{\min(D_1 G_1(s), D_d G_d(s))}{D_s G_s(s)} = \Omega\left(\tfrac{\min(\sqrt{s}, \sqrt{\ln d})}{\sqrt{\ln s} \cdot \sqrt{\ln s}}\right).
\]

\paragraph{Axis-aligned ellipsoids.}
Let $w_1 \ge \dots \ge w_d > 0$ and consider the ellipsoid $E(w) := \{x: \sum_i x_i^2/w_i^2 \le 1\}$, with $\|w\|_2 \le 1$.
We show the following gain using block norms:
\begin{equation}\label{eqn: gain-ellipsoid}
    \gain(E(w), s) = \Omega\left(\tfrac{\min(w_1 \sqrt{s}, \|w\|_2)}{\|w_{\le s}\|_2 \log s}\right).
\end{equation}

\emph{Spiked ellipsoid.}
In particular, take $w_1=2^{-1/2}d^{-1/4}$ and $w_j=2^{-1/2}d^{-1/2}$ for $j\ge2$ (a spiked covariance), and choose $s = d^{1/2}$.
Then $\|w\|_2 \ge \sqrt{(d-1)/(2d)}\ge\tfrac12$, and $\|{w_{\le s}}\|_2 \le\bigl(\tfrac12d^{-1/2}+2\sqrt d\cdot\tfrac1{2d}\bigr)^{1/2} =\sqrt{\tfrac32}\,d^{-1/4}$.
Therefore,
\[
    \gain(E(w), s) = \Omega\left(\tfrac{d^{1/4}}{\ln d}\right).
\]

We prove the bound on gain next.
First, note that the containment $E(w)\subseteq\{x:\|x\|_1\le1\}$ follows from $\|w\|_2 \le 1$ since for any $x \in E(w)$, we have $\|x\|_1 = \sum_{i} w_i \cdot \tfrac{|x_i|}{w_i} \le \|w\|_2 \sqrt{\sum_i \tfrac{x_i^2}{w_i^2}} \le \|w\|_2$.

This common normalization is without loss of generality, but each mirror map still uses its own radius $\rho_n(E(w))$.
Indeed, for $c>0$, $\rho_n(cA)=c\rho_n(A)$ and
\[
    B_{h_{\mathcal B,\rho_n(cA)}}(cz\|cx^{(1)})
    =c^2B_{h_{\mathcal B,\rho_n(A)}}(z\|x^{(1)}).
\]
Thus all diameters scale by $c$ when the body and initialization are scaled together, while the uniform factors $G_n(s)$ are unchanged; the common factor cancels in the gain.

The next proposition provides bounds on diameters.

\begin{proposition}[Diameter bounds for axis-aligned ellipsoids]\label{prop: ellipsoid}
    Given the axis-aligned ellipsoid $E(w) := \{x: \sum_i x_i^2/w_i^2 \le 1\}$  with $w_1 \ge \dots \ge w_d > 0$ and $\|w\|_2 \le 1$, and a starting point $x^{(1)}\in E(w)$,
    \begin{itemize}
        \item $D_{\ent}\ge\tfrac12\|w\|_2$ and $D_{\euc} \ge \tfrac{1}{2} w_1$.
        \item $D_n = O(\sqrt{\ln n}) \cdot \|w_{\le n}\|_2$, where $\|w_{\le n}\|_2 := \left(\sum_{j \in [n]} w_j^2\right)^{1/2}$
    \end{itemize}
\end{proposition}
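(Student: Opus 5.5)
The plan is to prove the two lower bounds directly from strong convexity, and the upper bound by reducing to the block-norm radius through \cref{lem: block-norm-diameter-bound-general}.

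\textbf{Lower bounds.} For $n\in\{1,d\}$ the partition is deterministic. By \cref{cor: scaled-mirror-maps}, $h_{n,\rho_n(E(w))}$ is $1$-strongly convex on $E(w)$ with respect to $\|\cdot\|_2$ (for $n=1$) or $\|\cdot\|_1$ (for $n=d$). Hence
\[
D_n^2 \ge \tfrac12 \max_{z\in E(w)} \|z-x^{(1)}\|^2 ,
\]
with the corresponding norm. Since $E(w)$ is centrally symmetric, for any $v$ with $\pm v\in E(w)$ the triangle inequality gives $\max\{\|v-x^{(1)}\|,\|-v-x^{(1)}\|\}\ge \|v\|$. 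It therefore suffices to exhibit a suitable $v$ for each norm.
\begin{itemize}
\item Euclidean case: take $v=w_1\mathbf e_1\in E(w)$. Then $D_\euc\ge w_1/\sqrt2\ge w_1/2$.
\item Entropic case: take $v_i = w_i^2/\|w\|_2$. Then $\sum_i v_i^2/w_i^2 = \sum_i w_i^2/\|w\|_2^2 = 1$, so $v\in E(w)$, and $\|v\|_1=\|w\|_2$. This gives $D_\ent\ge \|w\|_2/\sqrt2\ge\|w\|_2/2$.
\end{itemize}

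\textbf{Upper bound.} By \cref{lem: block-norm-diameter-bound-general}, $D_n=O(\rho_n(E(w))\sqrt{\ln n})$, so it suffices to show $\rho_n(E(w))\le\|w_{\le n}\|_2$. In fact I would show $\sup_{x\in E(w)}\|x\|_{[\mathcal B]}\le \|w_{\le n}\|_2$ for every partition $\mathcal B$ into $n$ blocks, not only equal-size ones. Fix $x\in E(w)$ and set
\[
c_j := \Big(\sum_{i\in B_j} x_i^2/w_i^2\Big)^{1/2}, \qquad a_j:=\max_{i\in B_j} w_i .
\]
Then $\|x_{B_j}\|_2\le a_j c_j$ and $\sum_j c_j^2\le 1$. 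Cauchy--Schwarz gives
\[
\|x\|_{[\mathcal B]}\le \Big(\sum_j a_j^2\Big)^{1/2}.
\]
The maxima $a_j$ are attained at $n$ distinct coordinates, one in each disjoint block. Their squares therefore sum to at most the sum of the $n$ largest $w_i^2$, which is $\|w_{\le n}\|_2^2$ because the weights are sorted.

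\textbf{Expected obstacle.} I expect the main care point to be this last combinatorial step: the per-block maxima must be charged to distinct top coordinates. Beyond that, one should note that the lower bounds hold for every starting point $x^{(1)}$ because $E(w)$ is centrally symmetric. The rest is routine.
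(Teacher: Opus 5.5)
Your proposal is correct and follows essentially the same route as the paper: strong convexity of the normalized endpoint maps combined with central symmetry, using the witnesses $w_1\mathbf e_1$ and $(w_1^2,\dots,w_d^2)/\|w\|_2$, and the per-block bound $\|x_{B_j}\|_2\le(\max_{i\in B_j}w_i)\,r_j$ followed by Cauchy--Schwarz and charging the block maxima to the top $n$ weights. The only difference is cosmetic: you bound the larger of $\|v-x^{(1)}\|$ and $\|-v-x^{(1)}\|$, whereas the paper averages $B_h(z\|a)+B_h(-z\|a)$, and both give the same constant $1/\sqrt2$.
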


\begin{proof}
    For the lower bounds on $D_\ent, D_\euc$, we use strong convexity of the normalized endpoint mirror maps and symmetry of the ellipsoid.
    For either endpoint norm $\|\cdot\|$, write $D_h$ for the corresponding diameter and $a=x^{(1)}$.
    For $z,-z\in E(w)$, strong convexity and the triangle inequality give
    \begin{align*}
        2D_h^2&\ge B_h(z\|a)+B_h(-z\|a)\\
        &\ge\frac12\bigl(\|z-a\|^2+\|-z-a\|^2\bigr)\\
        &\ge\frac14\bigl(\|z-a\|+\|-z-a\|\bigr)^2
        \ge\|z\|^2.
    \end{align*}
    Thus $D_h\ge\|z\|/\sqrt2\ge\|z\|/2$.
    Since $w_1 e_1 \in E(w)$, we get $2D_\euc\ge\|w_1e_1\|_2=w_1$.
    Similarly, since $\tfrac{1}{\|w\|_2}\left(w_1^2, \dots, w_d^2\right) \in E(w)$, we get $2D_\ent\ge\left\|\tfrac1{\|w\|_2}(w_1^2,\dots,w_d^2)\right\|_1=\tfrac{\|w\|_2^2}{\|w\|_2}=\|w\|_2$.

    For the upper bound on $D_n$, we use \cref{lem: block-norm-diameter-bound-general} with $\rho_n(E(w))$.
    For any $n$ block partition $\mathcal{B} = (B_1, \dots, B_n)$, we first claim that the maximum block norm in $\convexbody$ is at most $\sqrt{\sum_{j: B_j \ne\emptyset}M_j^2}$, where we denote $M_j := \max_{i \in B_j} w_i$ for each block $j = 1, \dots, n$.
    The upper bound follows from this claim, since for any $n$ blocks (and not just random blocks), $M_1^2 + \dots + M_n^2 \le w_1^2 + \dots + w_n^2$ since $w_1, \dots, w_n$ are the highest $n$ values among $\{w_i: i \in [d]\}$.
    Since the claim holds for every partition, it implies $\rho_n(E(w))\le\|w_{\le n}\|_2$, and hence $D_n=O(\sqrt{\ln n})\|w_{\le n}\|_2$.

    To see this claim, fix $x \in \convexbody$, and write $r_j^2 := \sum_{i \in B_j} x_i^2/w_i^2$.
    Within block $B_j$, the corresponding $L_2$ norm is $\|x_{B_j}\|_2^2 = \sum_{i \in B_j} x_i^2 = \sum_{i \in B_j} w_i^2 (x_i^2/w_i^2) \le M_j^2 r_j^2$.
    Therefore, the block norm of $x$ is at most $\sum_{j = 1}^n \|x_{B_j}\|_2 \le \sum_{j \in [n]} M_j r_j \le \sqrt{\sum_{j \in [n]} M_j^2} \cdot \sqrt{\sum_{j \in [n]} r_j^2}$.
    However, by definition of the ellipsoid, $1 \ge \sum_{i \in [d]} (x_i^2/w_i^2) = \sum_{j \in [n]} r_j^2$.
\end{proof}

This gives the desired bound \cref{eqn: gain-ellipsoid} on the gain, by noting that $G_\euc = \sqrt{s}$, $G_\ent = 1$, and by choosing $n = s$, so that $G_s = O(\sqrt{\log s})$ from \cref{lem: sparse-vector-block-variance-bound}.

\subsubsection{Gains for sandwiched convex sets}

We prove \cref{cor: sandwich-gain}.
First, we prove the following lemma:
\begin{restatable}{lemma}{SandwichLemma}\label{lem: sandwich-lemma}
    Given nonempty bounded convex sets $\convexbody_0 \subseteq  \convexbody$ and a common starting point $\var{x}{1}\in\convexbody_0$, fixed independently of the random partitions, the corresponding root-mean-square diameters \cref{eqn: defn-block-norm-diameter} for the $n$th random block norm satisfy the following:
    \begin{enumerate}
        \item $D_n(\convexbody_0) \le D_n(\convexbody)$.
        \item If $\convexbody_0$ is centrally symmetric about the origin ($\convexbody_0=-\convexbody_0$), then $D_n(\beta \convexbody_0) \le \beta D_n(\convexbody_0)$ for all $\beta \ge 1$, with the starting point unchanged.
    \end{enumerate}
\end{restatable}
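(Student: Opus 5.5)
For both parts the plan is to compare divergences pointwise for each fixed partition $\mathcal B$ and only afterwards take $\E_{\mathcal B}$ and square roots. The workhorse is the explicit form of the scaled mirror map used in the proof of \cref{lem: block-norm-diameter-bound-general}:
\[
h_{\mathcal B,\rho}(x)=\frac{\rho^{2-p}}{\gamma p}\sum_{j}\|x_{B_j}\|_2^{p}=\rho^{2-p}\,h_{\mathcal B,1}(x),
\qquad\text{hence}\qquad
B_{h_{\mathcal B,\rho}}(z\|x)=\rho^{2-p}\,B_{h_{\mathcal B,1}}(z\|x).
\]
Here $p=p_n\in(1,2]$. Write $h:=h_{\mathcal B,1}$ and $x:=x^{(1)}$.

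\emph{Part 1.} Since $\convexbody_0\subseteq\convexbody$, the definition \cref{eqn: block-norm-radius} gives $\rho_0:=\rho_n(\convexbody_0)\le\rho:=\rho_n(\convexbody)$. Because $2-p\ge0$ and $B_h\ge0$ (convexity of $h$), the factor $\rho^{2-p}$ is nondecreasing in $\rho$. So for each fixed $\mathcal B$,
\[
\max_{z\in\convexbody_0}\rho_0^{2-p}B_h(z\|x)\le\max_{z\in\convexbody}\rho^{2-p}B_h(z\|x),
\]
using both the smaller radius and the smaller set. Taking expectation over $\mathcal B$ and square roots gives $D_n(\convexbody_0)\le D_n(\convexbody)$.

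\emph{Part 2.} First, $\rho_n(\beta\convexbody_0)=\beta\rho_0$, so the prefactor becomes $\beta^{2-p}\rho_0^{2-p}$. It therefore suffices to show, for each fixed $\mathcal B$,
\[
\max_{y\in\convexbody_0}B_h(\beta y\|x)\le\beta^{p}\max_{y\in\convexbody_0}B_h(y\|x).
\]
Then the combined factor is $\beta^{2-p}\cdot\beta^p=\beta^2$, and the claim follows after taking expectations and square roots.

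To prove this inequality, use $p$-homogeneity $h(\beta y)=\beta^p h(y)$ and the identity $\langle\nabla h(x),x\rangle=p\,h(x)$ from the proof of \cref{lem: block-norm-diameter-bound-general}. With $g:=\nabla h(x)$ these give
\[
B_h(\beta y\|x)=\beta^p h(y)+(p-1)h(x)-\beta\langle g,y\rangle.
\]
Bound the last term by $\beta|\langle g,y\rangle|\le\beta^p|\langle g,y\rangle|$, and bound $(p-1)h(x)$ by $\beta^p(p-1)h(x)$. Both steps use $\beta\ge1$, $p\ge1$ and $h\ge0$. This yields
\[
B_h(\beta y\|x)\le\beta^p\bigl(h(y)+(p-1)h(x)+|\langle g,y\rangle|\bigr).
\]
Since $h$ is even, the bracket equals $\max\{B_h(y\|x),B_h(-y\|x)\}$. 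Central symmetry of $\convexbody_0$ puts $-y\in\convexbody_0$, so the bracket is at most $\max_{y'\in\convexbody_0}B_h(y'\|x)$, as required.

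\emph{Main obstacle.} The step needing care is this sign issue: the linear term $-\beta\langle g,y\rangle$ can be positive and does not scale like $\beta^p$. Central symmetry is used exactly here, to replace $y$ by $-y$ so that this term is absorbed. The starting point $x^{(1)}$ itself need not be the origin, and it stays unchanged throughout.
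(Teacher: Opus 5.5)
Your proposal is correct and is essentially the paper's proof. Part 1 is identical. In Part 2 the paper writes $B_h(\beta z\|x)=\beta^{p}B_h(z\|x/\beta)$ and shows that $\sup_{z\in\convexbody_0}B_h(z\|tx)$ is nondecreasing in $t\in[0,1]$; after dividing by $\beta^{p}$, this is exactly your termwise comparison of $(p-1)h(x)$ and $|\langle \nabla h(x),y\rangle|$, with central symmetry used the same way to replace the signed linear term by its absolute value.
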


\begin{proof}
    For a convex set $A$, let $\rho(A):=\rho_n(A)$ be its block norm radius.
    With the common starting point $x:=\var{x}{1}$, the root-mean-square diameter is
    \[
        D_n(A)^2=\E_{\mathcal B}\left[\sup_{z\in A}
        B_{n,\rho(A)}(z\|x)\right].
    \]
    Fix sampled blocks $\mathcal B=\{B_1,\dots,B_n\}$ and abbreviate $p=p_n$ and $h=h_{\mathcal B}$.
    Homogeneity gives
    \[
        h_{\mathcal B,\rho}(x)=\rho^2h(x/\rho)=\rho^{2-p}h(x),
        \qquad
        \nabla h_{\mathcal B,\rho}(x)=\rho\nabla h(x/\rho)
        =\rho^{2-p}\nabla h(x).
    \]
    Consequently, $B_{n,\rho}(z\|x)=\rho^{2-p}B_h(z\|x)$.
    Since $\rho(\convexbody_0)\le\rho(\convexbody)$, $2-p\ge0$, and Bregman divergences are nonnegative,
    \begin{align*}
        \sup_{z\in\convexbody_0}B_{n,\rho(\convexbody_0)}(z\|x)
        &=\rho(\convexbody_0)^{2-p}\sup_{z\in\convexbody_0}B_h(z\|x)\\
        &\le\rho(\convexbody)^{2-p}\sup_{z\in\convexbody_0}B_h(z\|x)\\
        &\le\rho(\convexbody)^{2-p}\sup_{z\in\convexbody}B_h(z\|x)
        =\sup_{z\in\convexbody}B_{n,\rho(\convexbody)}(z\|x).
    \end{align*}
    Taking expectations and square roots proves part (1).

    For part (2), suppose $\convexbody_0=-\convexbody_0$.
    We first show that moving the starting point towards the origin cannot increase the supremum of the divergence over $\convexbody_0$.
    Since $h$ is even and homogeneous of degree $p$, we have $\nabla h(tx)=t^{p-1}\nabla h(x)$ and $\langle\nabla h(x),x\rangle=ph(x)$.
    Hence, for $0\le t\le1$,
    \begin{align*}
        \sup_{z\in\convexbody_0}B_h(z\|tx)
        &=(p-1)t^ph(x)
        +\sup_{z\in\convexbody_0}
        \bigl[h(z)-t^{p-1}\langle\nabla h(x),z\rangle\bigr]\\
        &=(p-1)t^ph(x)
        +\sup_{z\in\convexbody_0}
        \bigl[h(z)+t^{p-1}|\langle\nabla h(x),z\rangle|\bigr].
    \end{align*}
    The last equality follows by pairing $z$ with $-z$.
    The right-hand side is nondecreasing in $t$, since $h(x)\ge0$ and $p>1$.
    In particular,
    \[
        \sup_{z\in\convexbody_0}B_h(z\|x/\beta)
        \le\sup_{z\in\convexbody_0}B_h(z\|x).
    \]
    Finally, $\rho(\beta\convexbody_0)=\beta\rho(\convexbody_0)$ and homogeneity of the divergence give
    \begin{align*}
        \sup_{y\in\beta\convexbody_0}B_{n,\rho(\beta\convexbody_0)}(y\|x)
        &=(\beta\rho(\convexbody_0))^{2-p}
        \sup_{z\in\convexbody_0}B_h(\beta z\|x)\\
        &=\beta^2\rho(\convexbody_0)^{2-p}
        \sup_{z\in\convexbody_0}B_h(z\|x/\beta)\\
        &\le\beta^2\sup_{z\in\convexbody_0}
        B_{n,\rho(\convexbody_0)}(z\|x).
    \end{align*}
    Taking expectations and square roots proves part (2).
    \qedhere
\end{proof}

\SandwichGain*

\begin{proof}
    Apply part (1) of \cref{lem: sandwich-lemma} to both inclusions $\convexbody_0\subseteq\convexbody\subseteq\beta\convexbody_0$, and then apply part (2).
    For every $n\in[d]$, with the same starting point in all three bodies,
    \[
        D_n(\convexbody_0)\le D_n(\convexbody)
        \le D_n(\beta\convexbody_0)\le\beta D_n(\convexbody_0).
    \]
    The uniform gradient factors $G_n(s)$ are the same for both bodies.
    Thus
    \begin{align*}
        \gain(\convexbody,s)
        &=\frac{\min_{n\in\{1,d\}}D_n(\convexbody)G_n(s)}
        {\min_{n\in[d]}D_n(\convexbody)G_n(s)}\\
        &\ge\frac{\min_{n\in\{1,d\}}D_n(\convexbody_0)G_n(s)}
        {\beta\min_{n\in[d]}D_n(\convexbody_0)G_n(s)}
        =\frac1\beta\,\gain(\convexbody_0,s). \qedhere
    \end{align*}
\end{proof}

\subsection{Instances with realized gains}\label{sec: instances-with-realized-gains}

The gains in \cref{tab: gains} and \cref{thm: upper-bound-improvements} measure improvements in regret \emph{upper bound} over Euclidean and entropic geometries.
In this section, we prove \cref{thm: regret-improvement-using-block-norms} and give explicit \gls{oco} instances for two polytopes where large gains in regret are realized using block norms.

\regretImprovementsBlockNorm*

In both cases, given dimension $d > 0$ and time horizon $T > 0$, the sequence of loss functions is generated as follows: at each time step $t$, the loss function is defined as $f^{(t)}(x) := - \langle c^{(t)},  x\rangle$ where $c^{(t)}$ is a $0$-$1$ vector with exactly $s$ non-zero coordinates, chosen as follows: $c^{(t)}_1 = 1$ and the other $s - 1$ non-zero coordinates are chosen uniformly at random from $[2, d]$.
The sparsity $s$ and starting point $\var{x}{1}$ are chosen separately for each example.

We follow a two-fold proof outline.
The regret for the $s$th block norm is upper bounded through \cref{eqn: block-norm-omd-regret-bound}.
To lower bound the regret for \gls{opgd}/\gls{oeg}, we will show that either algorithm already incurs large enough regret by some threshold time $T_0\le T$.
In both constructions, $x^*=\mathbf e_1$ minimizes every loss function over the polytope, so regret against $x^*$ is nonnegative at every time step, and therefore time steps beyond $T_0$ can be omitted.

\subsubsection{Logarithmic regret improvement for simplex}

Assume $d$ is sufficiently large and $T\ge C\ln d$ for a sufficiently large absolute constant $C>0$.
Choose sparsity $s = \lfloor\ln d\rfloor$ and starting point $\var{x}{1} = \tfrac{1}{d}\mathbf{1}_d$.
As in \cref{sec: gains-using-block-norms}, choosing $n = s = \lfloor\ln d\rfloor$ blocks and using \cref{lem: sparse-vector-block-variance-bound} and \cref{lem: block-norm-diameter-bound-general} yields the regret upper bound $\E[\regret_s(T)] = O((\ln s)\sqrt{T}) = O((\ln \ln d) \sqrt{T})$.

\paragraph{Regret lower bounds.}

Since $c_1 = 1$ always, $\E[\var{f}{t}(x^*)] = - \E[c^\top x^*]= - c_1 x^*_1 = -x^*_1$.
Further, for any $x \in \simplex_d$,
\begin{align*}
    & \E[f^{(t)}(x)] = -\sum_{i \in [d]} \E[c^{(t)}_i] x_i = -\sum_{i \in [d]} \Pr(c_i^{(t)} = 1) x_i \\
    = & - x_1 - \tfrac{s - 1}{d - 1} \sum_{i \in [2, d]} x_i = -x_1 - \tfrac{s - 1}{d - 1}(1 - x_1) \ge - x_1 - \tfrac{s - 1}{d - 1}. \notag
\end{align*}
Therefore, $\E[f^{(t)}(x) - \var{f}{t}(x^*)] \ge (x^*_1 - x_1) - \tfrac{s - 1}{d - 1}$.

Suppose the iterates of an algorithm $\var{x}{t}$ satisfy $1 - \var{x}{t}_1 = x^*_1 - \var{x}{t}_1 \ge \gamma$ for all $t \le T_0$, for some  $T_0 \in [T]$ and some $\gamma > 0$.
Then, the above gives
\begin{equation}\label{eqn: logarithmic-improvement-coordinate-regret-bound}
    \sum_{t \in [T_0]} \E[\var{f}{t}(\var{x}{t}) - \var{f}{t}(x^*)] \ge \left(\gamma - \tfrac{s - 1}{d - 1}\right)T_0.
\end{equation}

\emph{\gls{opgd}.}
We claim that for standard step size $\eta = \tfrac{D_\euc}{G_\euc \sqrt{T}}$ that $x^*_1 - \var{x}{t}_1 \ge \gamma := 1/2 - 1/d$ for all $t \le T_0 := 1 + \left\lfloor\frac{1}{2\eta}\right\rfloor$.
Plugging this into \cref{eqn: logarithmic-improvement-coordinate-regret-bound} and noting that $\gamma - \tfrac{s - 1}{d - 1} \ge \tfrac{1}{4}$ for all large enough $d$ and $s = \lfloor\ln d\rfloor$ gives
\[
    \E[\regret_\euc(T)] \ge \E[\regret_\euc(T_0)] \ge \left(\gamma - \tfrac{s - 1}{d - 1}\right)T_0 = \Omega\left(\sqrt{sT}\right) = \Omega(\sqrt{T \ln d}).
\]

To prove the claim, recall that $h_\euc(x)=\tfrac12\|x\|_2^2$, so the Bregman diameter is
\[
    D_\euc = \sqrt{\tfrac12\max_{z\in\simplex_d}\|z-x^{(1)}\|_2^2} = \sqrt{\tfrac12(1-1/d)}.
\]
Since $G_\euc=\sqrt s$, the step size is $\eta=D_\euc/(G_\euc\sqrt T)=\sqrt{(1-1/d)/(2sT)}$.
The Euclidean projection onto the simplex subtracts a nonnegative threshold after adding $\eta c^{(t)}$, so $x_1^{(t+1)}\le x_1^{(t)}+\eta$.
By induction, $x_1^{(t)}\le 1/d+(t-1)\eta$.
Thus, for $t\le T_0=1+\lfloor1/(2\eta)\rfloor\le T$, we have $x_1^{(t)}\le 1/d+1/2$ and hence $1-x_1^{(t)}\ge1/2-1/d$.

\emph{\gls{oeg}.}
We claim that for standard step size $\eta = \tfrac{D_\ent}{G_\ent \sqrt{T}}$ that $x^*_1 - \var{x}{t}_1 \ge \gamma := 1/2$ for all $t \le T_0 := \left\lfloor(\sqrt{T \ln d})/2\right\rfloor$.
As for \gls{opgd}, plugging into \cref{eqn: logarithmic-improvement-coordinate-regret-bound} gives regret $\ge \tfrac{1}{4} T_0 = \Omega(\sqrt{T \ln d})$.

Recall that $h_\ent(x) := \sum_{i \in [d]} x_i \ln x_i$.
Standard computations show that for starting point $\var{x}{1} = \tfrac{1}{d} \mathbf{1}_d$, diameter $D_\ent = \sqrt{\max_{x \in \simplex_d}KL(x\|x^{(1)})} = \sqrt{\ln d}$ and gradient bound $G_\ent = \max_{t} \|- \var{c}{t}\|_\infty = 1$.
Therefore, the step size $\eta = \sqrt{\tfrac{\ln d}{T}}$.

Since $\nabla f^{(t)}(x) = -c^{(t)}$ for all $x$, we have that
\[
    x^{(t + 1)}_i =
    \begin{cases}
        \frac{x^{(t)}_i e^\eta}{e^{\eta} \sum_{j:c_j^{(t)}=1} x_j^{(t)} + \sum_{j:c_j^{(t)}=0} x_j^{(t)}} & \text{if} \ c^{(t)}_i = 1, \\
        \frac{x^{(t)}_i}{e^{\eta} \sum_{j:c_j^{(t)}=1} x_j^{(t)} + \sum_{j:c_j^{(t)}=0} x_j^{(t)}} & \text{if} \ c^{(t)}_i = 0.
    \end{cases}
\]
By induction on $t$, we get $x^{(t)}_1 \le \frac{1}{d} e^{\eta t}$.
In particular, when $t \le T_0 = \lfloor(1/2)\sqrt{T \ln d}\rfloor \le (\ln d)/(2 \eta)$, we have $x^{(t)}_1 \le (1/d) e^{\eta T_0} \le \tfrac{1}{\sqrt{d}} \le \tfrac{1}{2} = \gamma$ for all $d \ge 4$.

\subsubsection{Polynomial improvement in regret}\label{sec: polynomial-regret-improvement}

We prove Part (1) of \cref{thm: regret-improvement-using-block-norms}, with the polytope $P = \conv(\simplex_d \cup \{A \mathbf{1}_d\})$ for $A = d^{-3/4}$.
As before, the loss function at each time $t \in [T]$ is $\var{f}{t}(x) := - x^\top \var{c}{t}$, where $\var{c}{t} \in \{0, 1\}^d$ is a random binary vector with exactly $s$ non-zero coordinates, with $\var{c}{t}_1 = 1$ always, and remaining $s - 1$ non-zero coordinates chosen uniformly at random from the remaining $d - 1$ coordinates.
Let $s := \lfloor\sqrt d\rfloor$ and starting point $\var{x}{1} := A \mathbf{1}_d$.
Denote $R:=Ad=d^{1/4}$, and assume throughout that $d$ is sufficiently large and $T\ge C\sqrt d$ for a sufficiently large absolute constant $C$.

Since losses $\var{f}{t}$ are linear, the optimal $x^*$ is one of the vertices.
We claim $x^* = \mathbf{e}_1$: indeed, $\E[\var{f}{t}(\mathbf{e}_1)] = -1$, while $\E[\var{f}{t}(\mathbf{e}_i)] = -\tfrac{s - 1}{d - 1} > -1$ for $i \neq 1$, and $\E[\var{f}{t}(\var{x}{1})] = - A \sum_{i = 1}^d \E[\var{c}{t}_i] = - As \ge -d^{-1/4}>-1$.
In fact, $\var{f}{t}(\mathbf e_1)=-1\le\var{f}{t}(x)$ for every $x\in P$ and every realization of $\var{c}{t}$, since $As<1$.

We upper bound the regret of the $s$th block norm by $O(\sqrt T\ln d)$ using \cref{lem: sparse-vector-block-variance-bound} and \cref{lem: block-norm-diameter-bound-general}; this is deferred to \cref{sec: polynomial-regret-improvement-proofs}.
To lower bound regret for Euclidean $(n = 1)$ or entropic $(n = d)$ geometries, we follow the same strategy as before, and show that the algorithms have already accumulated large enough regret by some threshold time $T_0=\Theta(d^{1/4}\sqrt T)$, by showing that the iterates $\var{x}{t}, t \le T_0$ satisfy $x^*_1 - \var{x}{t}_1 \ge \gamma$ for appropriate $\gamma > 0$.

Indeed, for any $x \in P$ and any time step $t$,
\begin{equation*}
    \E [f^{(t)}(x)] = - \sum_{i \in [d]} \Pr(c^{(t)}_i = 1) x_i = - x_1 - \tfrac{s - 1}{d - 1} \sum_{i \in [2, d]} x_i \ge - x_1 - \tfrac{s}{d} \sum_{i \in [2, d]} x_i.
\end{equation*}
In particular, since $\sum_{i \in [2, d]} x_i \le \|x\|_1 \le \|A \mathbf{1}_d\|_1 = Ad$ for all $x \in P$, we can lower bound $\E [f^{(t)}(x)] \ge -x_1 - \frac{s}{d}(Ad) = -x_1 - As \ge -x_1 - d^{-1/4}$.
Therefore, since $\E[\var{f}{t}(x^*)] = -1 = - x^*_1$, conditioning on the past and using independence of the current loss gives, whenever $x^*_1-\var{x}{t}_1\ge\gamma$ for $t\le T_0$,
\begin{equation}\label{eqn: polynomial-regret-pointwise-lower-bound}
    \sum_{t \le T_0} \E[\var{f}{t}(\var{x}{t}) - \var{f}{t}(x^*)] \ge \sum_{t \le T_0} \E\left[(x^*_1 - \var{x}{t}_1) - d^{-1/4}\right] \ge (\gamma - d^{-1/4}) T_0.
\end{equation}

\paragraph{Lower bound on \gls{opgd}.}
First, we compute the step size.
By symmetry, the diameter
\begin{align*}
    D_\euc^2 &= \frac12\max_{x \in P} \|x - x^{(1)}\|_2^2 = \frac12\|\mathbf{e}_1 - A\mathbf{1}_d\|_2^2 \\
    &= \frac12\big((1 - A)^2 + (d - 1)A^2\big)=\Theta(1).
\end{align*}
The last equality holds since $A = d^{-3/4}$.
The gradient bound is $G_\euc = \max_{t \in [T], x \in P} \|\nabla f^{(t)}(x)\|_2 = \sqrt{s} = \Theta(d^{1/4})$.
Therefore, $\eta = \frac{D_\euc}{G_\euc\sqrt{T}} = \Theta(1/(d^{1/4}\sqrt T))$.

At time step $t$, denote the set of non-zero coordinates in $\nabla f^{(t)} = - c^{(t)}$ as $C^{(t)} = \{i \in [d]: c^{(t)}_i = 1\}$.
The unconstrained update rule of \gls{opgd} for coordinates $i \in C^{(t)}$ is then $y^{(t)}_i = x^{(t)}_i + \eta$.
The Euclidean projection satisfies $x_1^{(t+1)}\le\max\{A,x_1^{(t)}+\eta\}$, as verified in \cref{app: polynomial-regret-euclidean-coordinate-bound}.
Therefore, we get by induction on $t$ that for the sequence $x^{(t)}$ of iterates of \gls{opgd}, $x_{1}^{(t)} \le A + (t - 1) \eta$.
In particular, for $T_0 := 1 + \lfloor 1/(2\eta)\rfloor = \Theta(d^{1/4}\sqrt T)\le T$, we get for $t \le T_0$ that $x^{(t)}_1 \le A + \frac{1}{2}$.
Choosing $\gamma := \frac12-A$ and plugging in \cref{eqn: polynomial-regret-pointwise-lower-bound},
\begin{align*}
    \E[\regret_\euc(T)] &\ge \sum_{t \in [T_0]} \E[f^{(t)}(x^{(t)}) - f^{(t)}(x^*)] \\
    &\ge (\tfrac12-d^{-3/4}-d^{-1/4}) T_0 = \Omega(T_0) = \Omega(d^{1/4} \sqrt{T}).
\end{align*}

\paragraph{Lower bound on OMD$_d$.}
Since $P \not\subseteq \simplex_d$, we cannot directly use \gls{oeg}, and instead use OMD$_d$ with $d$ blocks, with the normalized mirror map $h_{d,R}$, since $\rho_d(P)=R$.

For iterates $\var{x}{t}$ of OMD$_d$ with the standard step size $\eta := \tfrac{D_\ent}{G_\ent \sqrt{T}}$, we prove the following:
\begin{restatable}{lemma}{polynomialRegretBoundEntropicCoordinateBound}\label{lem: polynomial-lower-bound-entropic-coordinate-bound}
    The iterates $x^{(1)}, \ldots, x^{(T)} \in P$ of OMD with $d$th block norm satisfy, for every realization of the loss sequence,
    \[
        x_1^{(t)} \le A + \frac{\eta}{(Ad)^2}(t - 1),
        \qquad \eta=\Theta\left(\frac{Ad}{\sqrt T}\right).
    \]
\end{restatable}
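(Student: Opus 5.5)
The plan is to mimic the Euclidean argument from \cref{app: polynomial-regret-euclidean-coordinate-bound}. The target is the one-step inequality
\[
x_1^{(t+1)}\le \max\Bigl\{A,\;x_1^{(t)}+\tfrac{\eta}{R^2}\Bigr\},\qquad R=Ad=\rho_d(P).
\]
Iterating it from $x^{(1)}_1=A$ gives $x_1^{(t)}\le A+\tfrac{\eta}{R^2}(t-1)$ for every realization of the loss sequence, since the argument never uses which coordinates are active. The value of $\eta$ comes from the standard step size. For $G_\ent$, note $\|c^{(t)}\|_\infty=1$, so $G_d=1$. For $D_\ent$, we have $D_d=\Theta(R\sqrt{\ln d})$: the upper bound is \cref{lem: block-norm-diameter-bound-general} with $\rho=R$, and the matching lower bound comes from evaluating the divergence at the vertex $\mathbf e_1$. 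This yields $\eta=\Theta(R\sqrt{\ln d}/\sqrt T)$, which I would reconcile with the stated $\Theta(Ad/\sqrt T)$ up to the logarithmic factor.

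For $n=d$ the mirror map is separable on $P\subseteq\R^d_{\ge0}$:
\[
h(x)=\tfrac{R^{2-p}}{\gamma p}\sum_i x_i^{p},\qquad \partial_i h(x)=\tfrac{R^{2-p}}{\gamma}x_i^{p-1},\qquad p=1+\tfrac1{\ln d}.
\]
I would split the OMD step \cref{eqn: omd-update} into an unconstrained mirror step $y$ followed by the Bregman projection of $y$ onto $P$. In the mirror step, $\partial_1 h(y)=\partial_1 h(x^{(t)})+\eta c_1^{(t)}$, so coordinate $1$ moves by at most $\eta$ in dual coordinates. By the mean value theorem, $y_1-x_1^{(t)}\le \eta/\inf_u \partial_1^2 h(u)$, with the infimum over the relevant range of $u$ (at most $[A,1]$). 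There $\partial_1^2h(u)=R^{2-p}\tfrac{p-1}{\gamma}u^{p-2}=eR^{2-p}u^{p-2}$, and $u^{p-1}\ge A^{1/\ln d}=e^{-3/4}$. I would compute this infimum carefully against the normalization of $h_{d,R}$ to extract the $R^{-2}$ scaling in the statement. If only a weaker power of $R$ comes out, I would carry that exponent through, since any polynomial rate suffices for the $\Omega(d^{1/4})$ separation.

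The main obstacle is the projection step: I must show that the Bregman projection of $y$ onto $P=\conv(\simplex_d\cup\{A\mathbf1_d\})$ does not push coordinate $1$ above $\max\{A,y_1\}$. I would use the KKT conditions $\nabla h(x^{(t+1)})=\nabla h(y)-\nu$ with $\nu$ in the normal cone of $P$ at $x^{(t+1)}$. The normal cones come from the facets of $P$: the facet $\sum_i x_i\le 1$ of the simplex, the facets $x_i\ge0$, and the facets through $A\mathbf1_d$. I would show that every normal vector either is $\lambda\mathbf 1$ with $\lambda\ge0$ (which decreases all coordinates), or decreases $\partial_1 h$ unless $x^{(t+1)}_1\le A$. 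This mirrors the Euclidean case and exploits monotonicity of $\partial_1h$ together with symmetry of $P$ in coordinates $2,\dots,d$. If the facet enumeration becomes messy, I would fall back on a variational argument: if $x_1^{(t+1)}>\max\{A,y_1\}$, then moving mass from coordinate $1$ to the other coordinates (toward $A\mathbf1_d$, staying inside $P$) strictly decreases $B_h(\cdot\|y)$, contradicting optimality of the projection.
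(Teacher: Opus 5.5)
\textbf{Gap in the main argument.} The one-step inequality you aim for, $x_1^{(t+1)}\le\max\{A,\,x_1^{(t)}+\eta/R^2\}$, is false for general $x^{(t)}\in P$. Your argument is a mean-value bound on the mirror step plus ``the projection does not raise coordinate $1$'', and it uses nothing about $x^{(t)}$ beyond membership in $P$, so it cannot succeed. Your own curvature formula shows where it breaks. The function $\partial_1^2h(u)=eR^{2-p}u^{p-2}$ is decreasing in $u$, so its infimum on $[A,1]$ is $eR^{2-p}=e^{3/4}R$. The mirror step therefore only gives an increment of order $\eta/R$, and no bookkeeping of the normalization of $h_{d,R}$ turns this into $R^{-2}$. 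Concretely, take $x^{(t)}=\tfrac12(\mathbf e_1+\mathbf e_2)$ with $c_1=1$, $c_2=0$: the step moves mass from coordinate $2$ to coordinate $1$ along the simplex face and raises $x_1$ by $\Theta(\eta/R)$. The projection property you flag as the main obstacle is the easy part, since the Euclidean KKT argument carries over. It is also beside the point: the $R^{-2}$ rate needs the projection to actively pull coordinate $1$ back, not merely avoid pushing it up. Your fallback that ``any polynomial rate suffices'' is also wrong. With increments of order $\eta/R$ and $\eta=\Theta(R/\sqrt T)$, coordinate $1$ may reach $1/2$ after $\Theta(\sqrt T)$ rounds. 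That gives only an $\Omega(\sqrt T)$ regret lower bound, with no separation from the $O(\sqrt T\ln d)$ bound for the $s$th block norm. The exponent $2$ is exactly what yields $T_0=\Theta(R^2/\eta)=\Theta(d^{1/4}\sqrt T)$.

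\textbf{Missing idea and a step-size error.} You need to track the total simplex mass rather than $x_1$ itself. Write each iterate uniquely as $x=\alpha A\mathbf 1_d+\lambda$ with $\alpha,\lambda_i\ge0$ and $\alpha+\sum_i\lambda_i=1$. Then $x_1\le A+(1-\alpha)$, so it suffices to show that $1-\alpha$ grows by at most $\eta/R^2$ per round, and this does hold from every point of $P$. The paper proves it in $\hat P=P/R$, with coefficient $b=\alpha/d$, $\psi(u)=e\ln d\,u^{1/\ln d}$ and $\eta_0=\eta/R$, using the KKT conditions in the coefficients $(b,\lambda)$. Set $q_i=\psi(z_i')-\psi(z_i)-\eta_0c_i$ with multiplier $\mu$. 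Coordinates with $\lambda_i'>0$ have $q_i=-\mu$, the others have $q_i\ge-\mu$, and stationarity in $b$ gives $\sum_iq_i\ge-d\mu/R$. If $b'<b$, the coordinates with $\lambda_i'=0$ sit at $b'$ while $z_i\ge b$, so $q_i\le-\delta$ there, where $\delta=\psi(b)-\psi(b')$. Hence $\mu\ge\delta$ and $q_i\le-\delta$ for every $i$, and summing yields $\delta\le\mu/R$. Comparison with staying put gives $\mu\le\eta_0$. Finally, $\psi'\ge d$ on $(0,1/d]$ converts $\psi(b)-\psi(b')\le\eta_0/R$ into $d(b-b')\le\eta/R^2$. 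Geometrically, lifting coordinate $1$ above $A$ forces the coefficient of $A\mathbf 1_d$ down, which lowers all $d$ coordinates, where the mirror map is stiff. Separately, your step size is off by $\sqrt{\ln d}$. Evaluating at $\mathbf e_1$ gives $D_\ent^2=R^2\bigl(\tfrac1{p_d}+\tfrac{\ln d}{R}\bigl(\tfrac{e^{3/4}}{p_d}-1\bigr)\bigr)=\Theta(R^2)$, because $\mathbf e_1$ carries only a $1/R$ fraction of the starting point's $\ell_1$ mass. The general diameter lemma is loose here, and $\eta=\Theta(R/\sqrt T)$ holds exactly, with no logarithmic factor to reconcile.
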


In particular, the lemma implies that for $T_0 := 1 + \lfloor R^2/(2\eta)\rfloor = \Theta(d^{1/4}\sqrt T)\le T$, the iterates $\var{x}{t}, t\le T_0$ satisfy $x^*_1 - \var{x}{t}_1 \ge \tfrac{1}{2} - A$.
Denoting $\gamma := \tfrac{1}{2} - A$ and noting that $A = d^{-3/4}$, \cref{eqn: polynomial-regret-pointwise-lower-bound} implies
\begin{align*}
    \E[\regret_d(T)] &\ge \sum_{t \in [T_0]} \E[f^{(t)}(x^{(t)}) - f^{(t)}(x^*)] \\
    &\ge (\tfrac12-d^{-3/4}-d^{-1/4}) T_0 = \Omega(T_0) = \Omega(d^{1/4} \sqrt{T}).
\end{align*}

Therefore, it remains to prove \cref{lem: polynomial-lower-bound-entropic-coordinate-bound} in order to prove \cref{thm: regret-improvement-using-block-norms}.

Since $\|A\mathbf1_d\|_1=Ad=R>1$, define the scaled polytope $\hat P=P/R$ to be the convex hull of $\mathbf e_1/R,\ldots,\mathbf e_d/R$ and $\mathbf1_d/d$.
The loss functions are scaled up by $R$, so $\hat f^{(t)}(z)=f^{(t)}(Rz)=-R\langle c^{(t)},z\rangle$.
Since
\[
    B_{d,R}(Rz\|Rw)=R^2 B_{h_d}(z\|w),
\]
OMD with $h_{d,R}$ on $P$ and step size $\eta$ is equivalent to OMD with $h_d$ on $\hat P$ and step size $\hat\eta=\eta/R^2$.
Let $\var{z}{t}\in\hat P$ be the scaled iterates, so that $\var{x}{t}=R\var{z}{t}$.
In \cref{sec: polynomial-regret-improvement-proofs}, we compute the diameter of $\hat P$ and use the projection optimality conditions to prove the coordinate bound directly.

\section{Learning the optimal mirror map}\label{sec: learning-optimal-mirror-map}

In this section, we discuss learning the optimal mirror map for a given \gls{oco} instance from among a family of mirror maps, e.g., block norm mirror maps.

\subsection{Alternating between mirror maps is suboptimal}\label{sec: alternating-mirror-maps-is-suboptimal}

The most natural way to combine mirror maps is to use different mirror maps at different time steps $t \in [T]$.
We construct instances where naively alternating between mirror maps can incur regret $\Omega(T)$, thus demonstrating that more sophisticated strategies are needed to combine mirror maps.
In particular, we give instances where using the \gls{opgd} and \gls{oeg} at alternate steps leads to $\Omega(T)$ regret.
Contrast this with \gls{opgd} or \gls{oeg}, each of which results in $O(\sqrt{T})$ regret.
Our result holds for \emph{all (fixed) step sizes}, thus showing that adjusting the step sizes is not sufficient to produce an optimal-regret algorithm.

\alternatingMirrorMaps*

For clarity, take $T$ to be a positive multiple of $32$.
To prove this result, we give a specific \gls{oco} instance in $2$ dimensions where the alternating algorithm incurs $\Omega(T)$ regret.

\paragraph{\gls{oco} instance.}
The convex body is the probability simplex $\simplex_2 = \{x \in \R^2: x_1 + x_2 = 1, x_1, x_2 \ge 0\}$ in $2$ dimensions.
All algorithms must start at $x^{(1)} = (1/2, 1/2)$.
We are given that the loss functions $f^{(t)}, t \in [T]$ all satisfy $\|\nabla f^{(t)}(x)\|_2 \le 2$ for all $x \in \R^2$.
These will be chosen before play begins. For simplicity, we present two separate sequences for `small' and `large' step sizes, but they can be made oblivious by sampling each of the two sequences with probability $1/2$ a priori, without the knowledge of step sizes.

Consider an algorithm that runs \texttt{MirrorDescentStep} (\cref{alg: mirror-descent-step}) with the Euclidean mirror map $h_\euc(x) = \frac{1}{2} \|x\|_2^2$ on odd steps and with the entropic mirror map $h_\ent = \sum_{i \in [d]} x_i \ln x_i$ on even steps.
Suppose the step sizes for these are $\eta_{\euc}$ and $\eta_{\ent}$ respectively.
Using a case analysis on the step sizes, we show that there always exist loss functions so that this algorithm has regret $\Omega(T)$.

\textbf{Case 1}: $\eta_{\euc} \ge \frac{16}{T}$.
The loss functions are specified as follows:
\begin{equation*}
    f^{(t)}(x) =
    \begin{cases}
        -x_1 & \text{if} \ t \ \text{is odd}, \\
        0 & \text{if} \ t \ \text{is even and} \ t \le T/8, \\
        -2 x_2 & \text{if} \ t \ \text{is even and} \ t > T/8.
    \end{cases}
\end{equation*}
Since $\sum_{t \in [T]}f^{(t)}(x) = - \frac{T}{2} x_1 - \frac{7T}{8} x_2$, the optimal point $x^* = \mathbf{e}_2 = (0, 1)$, and the optimal total loss is $-\frac{7T}{8}$.

We will claim that the algorithm hits and stays at vertex $\mathbf{e}_1 = (1, 0)$ in $\le \frac{T}{8}$ time steps and then does not move, i.e., $x^{(t)} = \mathbf{e}_1$ for all $t \ge \frac{T}{8}$.
Assuming this claim, the loss of the algorithm is at least $-\frac{1}{2} \cdot \frac{T}{8}$ in the first $\frac{T}{8}$ time steps, and exactly $- \frac{1}{2} \cdot \frac{7T}{8}$ thereafter.
Thus, the total loss of the algorithm is at least $-T/2$, and the regret is at least $-T/2+7T/8=3T/8=\Omega(T)$.

To see the claim, first note that $x^{(t + 1)} = x^{(t)}$ for all even $t \le T/8$ since $f^{(t)}(x) = 0$.
Therefore, any movement in $x^{(t)}$ is due to the Euclidean mirror map update step, which moves $x^{(t)}$ towards $\mathbf{e}_1$ since $\nabla f^{(t)}(x) = (-1, 0)$ for all odd $t$.
Specifically, for all odd $t$,
\[
    y^{(t + 1)}_1 = x^{(t)}_1 + \eta_{\euc}, \qquad x^{(t + 1)}_1 = \Pi_\euc(y^{(t + 1)}) = \min\left(x^{(t)}_1 + \frac{\eta_{\euc}}{2}, 1\right).
\]
Thus, as long as $x^{(t)}_1 < 1$, the Euclidean update step moves it towards $1$ by a distance of $\frac{\eta_{\euc}}{2}$ at every odd time step $t \le T/8$.
Since $\eta_{\euc} \ge \frac{16}{T}$, and since $x^{(1)}_1 = 1/2$, this implies that $x^{(T/8)}_1 = \min\{\frac{1}{2} + \frac{T}{16}\cdot\frac{\eta_\euc}{2},1\} = 1$.

Once $x^{(t)} = (1, 0)$, the entropic map cannot revive zero mass on $x^{(t)}_2$, since its update rule takes
\[
    x^{(t + 1)}_2 \propto x^{(t)}_2 \exp(- \eta_{\ent} \nabla_2 f^{(t)}(x^{(t)})) = 0.
\]

\textbf{Case 2}: $\eta_{\euc} < \frac{16}{T}$.
The loss functions are specified as follows:
\begin{equation*}
    f^{(t)}(x) =
    \begin{cases}
        -x_2 & \text{if} \ t \ \text{is odd}, \\
        0 & \text{if} \ t \ \text{is even}.
    \end{cases}
\end{equation*}
Clearly, the optimal point is $x^* = \mathbf{e}_2 = (0, 1)$, with optimal total loss $-T/2$.
As in Case 1, the Euclidean mirror map moves towards the optimal by a distance of $\le \frac{\eta_{\euc}}{2}$ at every odd time step.
There is no movement at even time steps.
That is, for all $t$,
\[
    x^{(t)}_2 \le \frac{1}{2} + \frac{\eta_\euc}{2} \cdot \frac{t}{2}.
\]
Therefore, since $\eta_{\euc} < \frac{16}{T}$, we have $x^{(t)}_2 \le \frac{3}{4}$ for all $t \le T/16$.
The regret up to $T/16$ is therefore $\ge \left(1 - \frac{3}{4}\right) \cdot \frac{T}{32} = \frac{T}{128}$.
Since the regret is nonnegative thereafter, the total regret is at least $\frac{T}{128}$.

\subsection{Hedge algorithm over mirror maps}\label{sec: mwu-over-mirror-maps}

Having established that intermediate block norms can yield significant gains, we now ask whether an algorithm can adaptively learn which mirror map to use -- without prior knowledge of the structure of loss functions.

We show that Hedge algorithm that maintains each mirror map as an `expert' obtains regret at most the best mirror map's regret, plus an additional $O(\rho \sqrt{\ln N} \sqrt{T})$ term, where $\rho \ge \max_{x, z \in \convexbody, t \in [T]} f^{(t)}(x) - f^{(t)}(z)$ is an upper bound on the loss function differentials.
The algorithm requires knowing $\rho$.
The following theorem formalizes this:

\combiningMirrorMaps*

For completeness, we include the algorithm and a proof of the theorem in \cref{app: mwu-proof}.
The proof follows standard arguments.

Next, we combine this idea with the $1 + \log_2 d$ block norm mirror maps corresponding to $n \in \{2^0, 2^1, \ldots, 2^{\log_2 d}\}$.
In this setting, even the input on the step sizes is not required: given the Euclidean diameter $D_\euc = \max_{x, z \in \convexbody} \|x - z\|_2$, and the Euclidean gradient norm bound $G_\euc = \max_{x \in \convexbody, t \in [T]} \|\nabla f^{(t)}(x)\|_2$, the diameter $D$ and gradient $G$ for any other block norm is within factor $d$ of $D_\euc$ and $G_\euc$ respectively.
Thus, we can search over the step sizes for each block norm (up to a factor of 2) by making $O(\log d)$ copies of each block norm mirror map, but with different step sizes\footnote{This search over step sizes shares the idea of aggregating algorithms with different learning rates with MetaGrad \cite{van2016metagrad}.}.
We have the following result:

\combiningBlockNorms*

\begin{proof}
    By the portfolio construction above and \cref{thm: mirror-weights}, taking expectations over the random partitions gives
    \[
        \E[\regret(T)] = O\left(\min_n D_nG_n\sqrt T + \rho\sqrt{T\ln\ln d}\right),
    \]
    where $\rho:=\max_{t\in[T]}\sup_{x,z\in\convexbody}(f^{(t)}(x)-f^{(t)}(z))$.
    To control $\rho$, fix $n$, a partition $\mathcal B$, and a round $t$, and write
    \[
        D_{\mathcal B}:=\left(\sup_{u\in\convexbody}B_{h_{\mathcal B,\rho_n(\convexbody)}}(u\|x^{(1)})\right)^{1/2},
        \qquad G_{\mathcal B,t}:=\sup_{u\in\convexbody}\|\nabla f^{(t)}(u)\|^*_{[\mathcal B]}.
    \]
    Strong convexity and the triangle inequality give $\|x-z\|_{[\mathcal B]}\le2\sqrt2 D_{\mathcal B}$ for every $x,z\in\convexbody$.
    Integrating the gradient along the segment from $z$ to $x$ therefore gives
    \[
        f^{(t)}(x)-f^{(t)}(z)\le G_{\mathcal B,t}\|x-z\|_{[\mathcal B]}\le2\sqrt2 D_{\mathcal B}G_{\mathcal B,t}.
    \]
    The loss sequence is independent of the partition, so taking expectations and applying Cauchy--Schwarz yields
    \[
        f^{(t)}(x)-f^{(t)}(z)\le2\sqrt2\left(\E[D_{\mathcal B}^2]\right)^{1/2}\left(\E[G_{\mathcal B,t}^2]\right)^{1/2}\le2\sqrt2 D_nG_n.
    \]
    Taking the supremum over $x,z$ and the maximum over $t$ gives $\rho\le2\sqrt2 D_nG_n$ for every $n$.
    Substituting $\rho\le2\sqrt2\min_nD_nG_n$ into the portfolio bound proves the claimed expectation bound.
\end{proof}

\section{Conclusion}\label{sec: mirror-maps-conclusion}

The choice of the mirror map in \gls{omd} captures the geometry of the \gls{oco} problem, and can be exploited to show regret improvements in specific regimes of loss functions.
As we showed, specific \emph{block norm mirror maps} can yield polynomial-in-dimension improvements in regret in certain regimes over standard algorithms like \gls{opgd} and \gls{oeg}.

However, when very little is known about the loss functions \emph{a priori}, adhering to a fixed mirror map may lead to suboptimal regret.
In this work, we show that a \emph{portfolio of block norm mirror maps} can provide significantly improved performance.
We show that, using the Hedge/multiplicative-weights algorithm, we can switch between various block norms at runtime, achieving the regret rate of the best block norm, up to an $O(\sqrt{\ln \ln d})$ factor.

A natural direction for future work is to explore other families of mirror maps that interpolate between Euclidean and entropic geometries and to investigate whether a small, universal portfolio of geometries can achieve near-optimal regret across all \gls{oco} instances.
In particular, this work focuses on block norms with equal-size blocks.
Incorporating unequal blocks (that sometimes better adapt to the loss functions with non-uniform sparsity patterns) may require new techniques since the number of such partitions is much larger (even if we allow an $O(\ln d)$-approximation).

\section*{Acknowledgements}

This work was supported by the AI Institute for Optimization (AI4OPT) and the National Science Foundation (NSF) under grants CCF-AF-2504994 and CCF-2106444.

\textbf{AI-use statement.} The large language model Claude Fable 5 was used to assist with presentation of the manuscript. All research ideas and proofs originated with the authors, who take full responsibility for the manuscript's content and the experiments.

\bibliographystyle{alpha}
\bibliography{references}

\newpage

\appendix

\section{Online mirror descent}\label{sec: mirror-maps-omd}

\begin{algorithm}
    \caption{\texttt{MirrorDescentStep}$(x, \mathcal{K}, h, \nabla f, \eta)$}\label{alg: mirror-descent-step}
    \KwIn{point $x$ in convex body $\mathcal{K}$, distance generating function $h$, loss function gradient $\nabla f$, and step size $\eta$}
    $\theta = \nabla h (x)$ \tcp*[r]{Map to dual space}
    $\theta' = \theta - \eta \nabla f(x)$ \tcp*[r]{Move in direction of negative gradient}
    $y = (\nabla h)^{-1}(\theta')$ \tcp*[r]{Map back to primal space}
    \Return $x' = \Pi^{(h)}_\mathcal{K}(y) := {\arg\min}_{z \in \mathcal{K}} B_h(z \| y)$ \tcp*[r]{Project on $\mathcal{K}$}
\end{algorithm}

\LinesNumbered

\begin{algorithm}
    \caption{\texttt{OnlineMirrorDescent} (\cite{shalev-shwartz_online_2012, hazan_introduction_2016})}
    \label{alg:mirror-descent}
    \KwIn{(i) an online convex optimization setup with a closed, bounded convex body $\mathcal{K}$, a time horizon $T$, (ii) a convex function $h$ defined\footnotemark on $\R^d$ with an invertible gradient $\nabla h$, and (iii) a starting point $x^{(1)} \in \mathcal{K}$}
    \let\oldnl\nl
    \newcommand{\nonl}{\renewcommand{\nl}{\let\nl\oldnl}}
    \nonl \textbf{Parameters}: step sizes $\eta^{(1)}, \ldots, \eta^{(T)} \in \R_{> 0}$ \;
    \For{$t = 1, \ldots, T$}{
        play $x^{(t)}$ and observe loss $f^{(t)}$\;
        $x^{(t + 1)} = \texttt{MirrorDescentStep}(x^{(t)}, \mathcal{K}, h, \nabla f^{(t)}, \eta^{(t)})$\;
    }
\end{algorithm}

\footnotetext{For simplicity, we assume that mirror map $h$ is defined on $\R^d$; however, one must be more careful and define it on an appropriate closed and convex set $X \supseteq \mathcal{K}$.}

We formally describe the mirror descent algorithm \cite{nemirovski1983problem, ben-tal_nemirovski_lecture_notes_2025, shalev-shwartz_online_2012, hazan_introduction_2016} and state its regret guarantees.

Suppose the mirror map $h: \convexbody \to \R$ is $1$-strongly convex with respect to some norm $\|\cdot\|$ on $\R^d$, i.e., if
\[
    B_h(x \| y) = h(x) - h(y) - \nabla h(y)^\top (x - y) \ge \frac{1}{2} \|x - y\|^2 \quad \ \forall \ x, y \in \convexbody.
\]

Denote by $\|\cdot\|^*$ the dual norm to $\|\cdot\|$.
Given starting point $x^{(1)}$ for \gls{omd}, denote by $D_h := \sqrt{\max_{z \in \mathcal{K}} B_h(z \| x^{(1)})}$ the diameter of $\mathcal{K}$ under $B_h$.
Then, for convex loss functions $f^{(1)}, \ldots, f^{(T)}$, the regret of \gls{omd} with Bregman divergence $B_h$, step-sizes $\eta_t = \eta$, and iterates $x^{(1)}, \ldots, x^{(T)}$ satisfies for any $x^* \in \mathcal{K}$:
\begin{align*}
    \regret(T) := \sum_{t \in [T]} \left( f^{(t)}(x^{(t)}) - f^{(t)}(x^*)\right) \le \frac{D_h^2}{\eta} + \frac{\eta}{2} \sum_{t \in [T]} \left(\|\nabla f^{(t)}(x^{(t)})\|^*\right)^2
\end{align*}
If we further have the bound $\|\nabla f^{(t)}(x^{(t)})\|^* \le G_h$, then this implies
\begin{align*}
    \regret(T) \le \frac{D_h^2}{\eta} + \frac{T \eta G_h^2}{2}.
\end{align*}
Choosing $\eta = \frac{D_h}{G_h\sqrt{T}}$ gives
\begin{align}\label{eqn: regret-bound-general}
    \regret(T) \le 2 D_hG_h \sqrt{T}
\end{align}
This also holds in expectation when gradients are \emph{stochastic} as long as the second moment is bounded:
\[
    \E \left[\left(\|\nabla f^{(t)}(x^{(t)})\|^*\right)^2\right] \le G_h^2.
\]

\section{Suboptimality of norm-induced mirror maps for sparse losses}\label{sec: induced-mirror-maps-are-suboptimal}

We give an example where the standard $L_p$ mirror map on the $L_p$ unit ball incurs asymptotically larger regret than a suitable block-norm mirror map on the same feasible set.
The separation holds for realized regret with the standard step sizes used throughout this paper.
Here, the exponent $p>1$ approaches $1$ as the dimension grows.

For $1<p\le2$, write
\[
    B_p^d:=\{x\in\R^d:\|x\|_p\le1\},
    \qquad
    h_p(x):=\frac{\|x\|_p^2}{2(p-1)},
    \qquad
    a:=1-\frac1p.
\]
The mirror map $h_p$ is $1$-strongly convex with respect to the $L_p$ norm; see, e.g., \cite{kwon2016gains}.
All algorithms below start at $x^{(1)}=0$.

\begin{proposition}[Separation on the matching norm ball]\label{prop: fixed-lp-block-gain}
    Let $1<p\le2$, let $3\le s\le d$ be an integer, and suppose that $T\ge8/(p-1)$.
    There exists a deterministic sequence of linear losses on $B_p^d$, with gradients in $[-1,1]^d$ supported on a fixed set of $s$ coordinates, such that OMD with mirror map $h_p$ and standard step size satisfies
    \[
        \regret_{h_p}(T)
        =\Theta\left(s^{1-1/p}\sqrt{\frac{T}{p-1}}\right).
    \]
    On the same instance, OMD with a uniformly random equal-size partition into $s$ blocks, normalized mirror map $h_{s,\rho_s(B_p^d)}$, and standard step size satisfies
    \[
        \E[\regret_s(T)]
        =O\left(s^{1-1/p}\ln s\sqrt T\right).
    \]
    All implicit constants are absolute.
    In particular,
    \[
        \regret_{h_p}(T)
        \ge
        \frac{c}{\sqrt{p-1}\,\ln s}\,
        \E[\regret_s(T)]
    \]
    for an absolute constant $c>0$.
\end{proposition}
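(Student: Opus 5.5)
The instance is a single fixed linear loss repeated every round. Let $C=[s]$ and $c:=\mathbf 1_C$, and set $f^{(t)}(x):=-\langle c,x\rangle$ for all $t$; the gradient $-c$ lies in $[-1,1]^d$ and is supported on the fixed set $C$. Write $q$ for the dual exponent, so $1/q=a=1-1/p$. The comparator is $x^*=\arg\max_{x\in B_p^d}\langle c,x\rangle$, which by H\"older is the $L_p$-normalized vector $x^*=s^{-1/p}\mathbf 1_C$, with value $\|c\|_q=s^{1/q}=s^{a}$. Hence the regret of any algorithm is
\[
\sum_t s^a\bigl(1-\langle c,x^{(t)}\rangle/s^a\bigr).
\]

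\textbf{Step 1: the $h_p$ upper bound.} The conjugate of $h_p$ is $h_p^*(\theta)=\tfrac{p-1}{2}\|\theta\|_q^2$. From $x^{(1)}=0$ we get $D_{h_p}^2=\max_{\|x\|_p\le1}h_p(x)=\tfrac{1}{2(p-1)}$ and $G=\|c\|_q=s^a$. The general bound \cref{eqn: regret-bound-general} then gives
\[
\regret_{h_p}(T)\le 2D_{h_p}G\sqrt T=O\bigl(s^a\sqrt{T/(p-1)}\bigr),
\]
and the standard step size is $\eta=\tfrac{1}{s^a\sqrt{2(p-1)T}}$.

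\textbf{Step 2: the $h_p$ lower bound, via explicit iterates.} I will show by induction that every iterate has the form $x^{(t)}=r_t x^*$ with $r_t\in[0,1]$.

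\begin{itemize}
\item \emph{Dual step.} If $x^{(t)}=r_tx^*$, then $\nabla h_p(x^{(t)})$ is a positive multiple of $\mathbf 1_C$ with $q$-norm $r_t/(p-1)$. This uses the identity $\|\nabla h_p(x)\|_q=\|x\|_p/(p-1)$. Adding $\eta c$ keeps the dual point on the ray of $\mathbf 1_C$, with $q$-norm $r_t/(p-1)+\eta s^a$.
\item \emph{Map back.} Applying $\nabla h_p^*$ gives the primal point $\min$-free value $y=(r_t+(p-1)\eta s^a)\,x^*$, using $\|\nabla h_p^*(\theta)\|_p=(p-1)\|\theta\|_q$.
\item \emph{Projection.} The Bregman projection of a radial point onto $B_p^d$ is its radial rescaling to the boundary. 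I would verify this from the first-order optimality condition: the normal cone of $B_p^d$ at $x^*$ is spanned by $\mathbf 1_C$, which is collinear with $\nabla h_p(y)-\nabla h_p(x^*)$.
\end{itemize}

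Together these give $r_{t+1}=\min\{1,\,r_t+(p-1)\eta s^a\}$. So $r_t\le (t-1)(p-1)\eta s^a$, and $r_t\le 1/2$ for all $t\le t_0:=1+\bigl\lfloor \tfrac{1}{2(p-1)\eta s^a}\bigr\rfloor$.

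Substituting the step size, $t_0=\Theta(\sqrt{T/(p-1)})$. The hypothesis $T\ge 8/(p-1)$ is exactly what guarantees $t_0\le T$. Since the per-round regret $s^a(1-r_t)$ is nonnegative, summing only over $t\le t_0$ gives
\[
\regret_{h_p}(T)\ge \tfrac12 s^a t_0=\Omega\bigl(s^a\sqrt{T/(p-1)}\bigr),
\]
which matches Step 1.

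I expect the projection step to be the main obstacle. The claim that the Bregman projection of a radial point is radial needs a clean verification via the optimality conditions, and the identities $\langle\nabla h_p(x),x\rangle=2h_p(x)$ and the norm-duality of $\nabla h_p$ should make it short.

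\textbf{Step 3: the block-norm upper bound.} Fix any equal-size partition into $s$ blocks. Because $p\le2$, each block satisfies $\|x_{B_j}\|_2\le\|x_{B_j}\|_p$. H\"older over the $s$ blocks then gives
\[
\|x\|_{[\mathcal B]}\le s^{1-1/p}\Bigl(\sum_j\|x_{B_j}\|_p^p\Bigr)^{1/p}=s^a\|x\|_p .
\]
Hence $\rho_s(B_p^d)\le s^a$, and \cref{lem: block-norm-diameter-bound-general} gives $D_s=O(s^a\sqrt{\ln s})$. \cref{lem: sparse-vector-block-variance-bound} with $n=s$ gives $G_s\le 3\max\{1,\sqrt{\ln s}\}=O(\sqrt{\ln s})$. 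Then \cref{eqn: block-norm-omd-regret-bound} yields
\[
\E[\regret_s(T)]=O\bigl(s^a\ln s\sqrt T\bigr).
\]
Dividing the lower bound of Step 2 by this bound gives the final ratio $\regret_{h_p}(T)\ge \tfrac{c}{\sqrt{p-1}\,\ln s}\,\E[\regret_s(T)]$ for an absolute constant $c>0$.
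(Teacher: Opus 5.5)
Your proposal is correct and follows the paper's proof. It uses the same repeated loss $-\langle \mathbf{1}_C, x\rangle$, the same radial iterates $x^{(t)} = r_t x^*$ with $r_{t+1}=\min\{1, r_t+\sqrt{(p-1)/(2T)}\}$, and the same H\"older bound $\rho_s(B_p^d)\le s^{1-1/p}$ fed into the diameter and gradient lemmas. The differences are only local: the paper gets the radial form from strict convexity plus invariance under permutations of $C$ and sign flips off $C$, which reduces each step to a one-dimensional quadratic, rather than from your explicit dual-step and projection check (where you should note that $\nabla h_p(y)-\nabla h_p(x^*)$ is a \emph{nonnegative} multiple of $\mathbf{1}_C$); and it reads the matching upper bound off the exact regret formula rather than the generic OMD bound.
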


\begin{proof}
    Fix a set $C\subseteq[d]$ with $|C|=s$, and use the same loss function at every round:
    \[
        f^{(t)}(x):=-\langle\mathbf1_C,x\rangle,
        \qquad t\in[T].
    \]
    The gradients are bounded and exactly $s$-sparse.
    By H\"older's inequality, the optimal comparator is
    \[
        u:=s^{-1/p}\mathbf1_C,
        \qquad
        \|u\|_p=1,
        \qquad
        f^{(t)}(u)=-s^a.
    \]

    \paragraph{Regret of the $L_p$ mirror map.}
    Since $h_p(0)=0$ and $\nabla h_p(0)=0$, its Bregman diameter from the origin is
    \[
        D_{h_p}^2
        =\max_{x\in B_p^d}B_{h_p}(x\|0)
        =\max_{x\in B_p^d}h_p(x)
        =\frac1{2(p-1)}.
    \]
    Let $q=p/(p-1)$ be the dual exponent.
    The gradient bound and standard step size are therefore
    \[
        G_{h_p}=\|\mathbf1_C\|_q=s^a,
        \qquad
        \eta_p
        =\frac{D_{h_p}}{G_{h_p}\sqrt T}
        =\frac1{s^a\sqrt{2(p-1)T}}.
    \]

    We claim that the iterates have the form $x^{(t)}=r_tu$, where
    \[
        r_1=0,
        \qquad
        r_{t+1}=\min\{1,r_t+\delta\},
        \qquad
        \delta:=\sqrt{\frac{p-1}{2T}}.
    \]
    To see this, suppose that $x^{(t)}=r_tu$.
    The OMD objective is strictly convex and invariant under permutations of coordinates in $C$ and sign changes of coordinates outside $C$.
    Thus, its unique minimizer has equal coordinates on $C$ and zero coordinates outside $C$.
    It consequently lies on the line $\{ru:-1\le r\le1\}$.
    On this line,
    \[
        h_p(ru)=\frac{r^2}{2(p-1)},
        \qquad
        B_{h_p}(ru\|r_tu)=\frac{(r-r_t)^2}{2(p-1)}.
    \]
    Hence the update reduces to
    \[
        r_{t+1}
        =\arg\min_{-1\le r\le1}
        \left\{-s^a r+\frac{(r-r_t)^2}{2\eta_p(p-1)}\right\}
        =\min\{1,r_t+\eta_p(p-1)s^a\}.
    \]
    Since $\eta_p(p-1)s^a=\delta$, induction proves the claim.

    It follows that the realized regret is exactly
    \[
        \regret_{h_p}(T)
        =s^a\sum_{t=1}^T
        \left(1-\min\{1,(t-1)\delta\}\right).
    \]
    Our assumptions imply $\delta\le1/4$ and $1/\delta\le T/2$.
    For the first $m:=\lfloor1/(2\delta)\rfloor$ rounds, each summand is at least $1/2$, and $m\ge1/(4\delta)$.
    Conversely, at most $\lceil1/\delta\rceil$ summands are nonzero, and each is at most $1$.
    Therefore,
    \[
        \frac{s^a}{8\delta}
        \le\regret_{h_p}(T)
        \le\frac{2s^a}{\delta},
    \]
    proving
    \[
        \regret_{h_p}(T)
        =\Theta\left(s^a\sqrt{\frac{T}{p-1}}\right).
    \]

    \paragraph{Regret of the block-norm mirror map.}
    We first compute the block-norm radius of $B_p^d$.
    For any partition $\mathcal B=\{B_1,\ldots,B_n\}$ into $n$ nonempty blocks, the inequality $p\le2$ and H\"older's inequality give
    \[
        \|x\|_{[\mathcal B]}
        =\sum_{j=1}^n\|x_{B_j}\|_2
        \le\sum_{j=1}^n\|x_{B_j}\|_p
        \le n^{1-1/p}
        \left(\sum_{j=1}^n\|x_{B_j}\|_p^p\right)^{1/p}
        \le n^a
    \]
    for every $x\in B_p^d$.
    Equality is attained by placing mass $n^{-1/p}$ on one coordinate in each block.
    Consequently,
    \[
        \rho_n(B_p^d)=n^a.
    \]

    Choose $n=s$ blocks.
    By \cref{lem: block-norm-diameter-bound-general,lem: sparse-vector-block-variance-bound},
    \[
        D_s=O(s^a\sqrt{\ln s}),
        \qquad
        G_s=O(\sqrt{\ln s}).
    \]
    The standard expected-regret guarantee therefore yields
    \[
        \E[\regret_s(T)]
        \le2D_sG_s\sqrt T
        =O(s^a\ln s\sqrt T).
    \]
    Combining this upper bound with the realized-regret lower bound for $h_p$ proves the result.
\end{proof}

\paragraph{Logarithmic separation.}
Take
\[
    p=1+\frac1{\ln d},
    \qquad
    s=\lfloor\ln d\rfloor,
    \qquad
    T\ge8\ln d.
\]
Then $a=1/(\ln d+1)$ and $s^a=\Theta(1)$.
For sufficiently large $d$, the proposition gives
\[
    \regret_{h_p}(T)=\Theta(\sqrt{T\ln d}),
    \qquad
    \E[\regret_s(T)]=O(\sqrt T\ln\ln d).
\]
Thus, the block-norm mirror map improves realized regret by a factor of at least $\Omega(\sqrt{\ln d}/\ln\ln d)$.

\paragraph{Polynomial separation.}
Alternatively, take
\[
    p=1+d^{-1/2},
    \qquad
    s=\lfloor\sqrt d\rfloor,
    \qquad
    T\ge8\sqrt d.
\]
Now $a=1/(\sqrt d+1)$, so again $s^a=\Theta(1)$.
The proposition gives
\[
    \regret_{h_p}(T)=\Theta(d^{1/4}\sqrt T),
    \qquad
    \E[\regret_s(T)]=O(\sqrt T\ln d).
\]
Hence, even when the feasible set is precisely the unit ball of the norm defining $h_p$, a suitable block-norm mirror map improves realized regret by a factor of at least $\Omega(d^{1/4}/\ln d)$.

\section{Gain computations for the remaining convex sets}\label{app: block-norm-computations}

We follow the template of the main text; each paragraph proves the general-parameter, general-$s$ row of \cref{tab: gains} for one family, and ends by specializing the parameters and $s$ to recover the second row.
We use the normalized mirror maps $h_{n,\rho_n(\convexbody)}$ and the uniform gradient factors $G_n(s)$, abbreviated below by $G_n$, for gradients in $[-1,1]^d$.
Each body in the examples below is symmetric about the origin, and the diameter bounds hold for any fixed starting point $x^{(1)}\in\convexbody$.

Recall that the $1$st block norm is the $L_2$ norm and the $d$th block norm (singleton blocks) is the $L_1$ norm.
We compute the endpoint lower bounds directly.
For either endpoint mirror map $h$ and corresponding norm $\|\cdot\|$, put $a=x^{(1)}$.
Given $z\in\convexbody$, symmetry also gives $-z\in\convexbody$.
Strong convexity and the triangle inequality imply
\begin{align*}
    2D_h^2&\ge B_h(z\|a)+B_h(-z\|a)\\
    &\ge\frac12\bigl(\|z-a\|^2+\|-z-a\|^2\bigr)\\
    &\ge\frac14\bigl(\|z-a\|+\|-z-a\|\bigr)^2
    \ge\|z\|^2.
\end{align*}
Thus exhibiting a point of $L_2$ norm at least $R_\euc$ and a point of $L_1$ norm at least $R_\ent$ gives $D_\euc\ge R_\euc/\sqrt2\ge R_\euc/2$ and $D_\ent\ge R_\ent/\sqrt2\ge R_\ent/2$.
If, in addition, $M(n)\ge \max_{x\in\convexbody}\|x\|_{[n]}$ holds for \emph{every} partition of $[d]$ into at most $n$ nonempty blocks, then it bounds the radius $\rho_n(\convexbody)\le M(n)$, since this includes every equal-size $n$-partition.
Hence \cref{lem: block-norm-diameter-bound-general} gives $D_n=O(\rho_n(\convexbody)\sqrt{\ln n})=O(M(n)\sqrt{\ln n})$.
Choosing $n=s$ and using $G_\euc= \sqrt s$, $G_\ent=1$ and $G_s(s)\le3\sqrt{\ln s}$ (for $s\ge3$, by \cref{lem: sparse-vector-block-variance-bound}), we obtain the master bound
\begin{equation}\label{eqn: master-gain}
    \gain(\convexbody,s)
    \;\ge\;\frac{\min\{D_\euc G_\euc,\ D_\ent G_\ent\}}{D_sG_s}
    \;=\;\Omega\left(\frac{\min\{R_\euc\sqrt s,\ R_\ent\}}
    {M(s)\,\ln s}\right).
\end{equation}
Each paragraph therefore only has to (i) record the normalization making $\convexbody\subseteq\{x:\|x\|_1\le1\}$ -- this is without loss of generality when the starting point is scaled with the body.
Indeed, $\rho_n(c\convexbody)=c\rho_n(\convexbody)$ and the normalized maps satisfy
\[
    B_{h_{\mathcal B,\rho_n(c\convexbody)}}(cz\|cx^{(1)})
    =c^2B_{h_{\mathcal B,\rho_n(\convexbody)}}(z\|x^{(1)}).
\]
Thus every $D_n$ scales by $c$, while the uniform factors $G_n(s)$ are independent of the body, so the common factor cancels in the gain.
The proof of \cref{eqn: master-gain} still uses the individual radius $\rho_n(\convexbody)$ for each mirror map -- then (ii) exhibit the witnesses for $R_\euc$ and $R_\ent$, and (iii) bound $M(n)$.

For the bodies built as Minkowski sums we use the triangle inequality $\max_{\convexbody}\|\cdot\|_{[n]}\le\sum_m\alpha_m \max_{A_m}\|\cdot\|_{[n]}$ over $\convexbody=\sum_m\alpha_mA_m$, together with the following block norms of the pieces, valid for every partition into $N\le n$ nonempty blocks $B_1,\dots,B_N$ of sizes $b_1,\dots,b_N$:
\begin{equation}\label{eqn: piece-norms}
    \max_{cB_1}\|\cdot\|_{[n]}
    =\max_{c[-\mathbf{e}_1,\mathbf{e}_1]}\|\cdot\|_{[n]}=c,
    \qquad
    \max_{cB_2}\|\cdot\|_{[n]}\le c\sqrt n,
    \qquad
    \max_{cB_\infty}\|\cdot\|_{[n]}
    =c\sum_i\sqrt{b_i}\le c\sqrt{nd}.
\end{equation}
Indeed, the block norm is convex and nondecreasing in coordinate magnitudes, so each maximum is attained at a vertex (for $B_1$, the $1$-sparse points $\pm c\mathbf{e}_j$, of block norm $c$) or at the corner $c\mathbf1$ (for $B_\infty$, giving $c\sum_i\sqrt{b_i}\le c\sqrt{n\sum_ib_i}$ by Cauchy--Schwarz); for $B_2$, writing $\tau_i:=\|x_{B_i}\|_2$, the constraint is $\sum_i\tau_i^2\le c^2$ and hence $\sum_i\tau_i\le c\sqrt n$.

\paragraph{$L_p$ norm ball, $1\le p\le2$.}
Normalize
$\convexbody=c\,B_ p$ with $c:=d^{-(1-1/p)}$.
We show
\[
    \gain(cB_ p,s)
    =\Omega\left(\frac{\min\bigl(\sqrt s,\ d^{1-1/p}\bigr)}
    {s^{1-1/p}\,\ln s}\right).
\]
\emph{Containment:} by H\"older, $\|x\|_1\le d^{1-1/p}\|x\|_p\le cd^{1-1/p}=1$.
\emph{Block norm:} for every partition with $N\le n$ nonempty blocks, within each block $\|x_{B_i}\|_2\le\|x_{B_i}\|_p=:c_i$ (as $p\le2$), and H\"older across blocks gives $\sum_ic_i\le N^{1-1/p}\bigl(\sum_ic_i^p\bigr)^{1/p}\le cN^{1-1/p}$; this is attained by placing mass $cN^{-1/p}$ on one coordinate per nonempty block.
Hence $M(n)=cn^{1-1/p}$.
Since the same construction works for every equal-size $n$-partition, $\rho_n(\convexbody)=cn^{1-1/p}$, and the normalized diameter bound gives $D_n=O(cn^{1-1/p}\sqrt{\ln n})$.
\emph{Witnesses ($0\in\convexbody$):} $c\mathbf{e}_1$ gives $R_\euc=c$ (this is exact, since $\|x\|_2\le\|x\|_p\le c$); the flat point $cd^{-1/p}\mathbf1$ has $\|\cdot\|_p=c$ and gives $R_\ent=cd^{1-1/p}=1$.
\emph{Gain:} plug into \cref{eqn: master-gain}; the factor $c$ cancels.
\emph{Specialization ($p=4/3$, $s=\sqrt d$):} $\min(\sqrt s,d^{1/4})=d^{1/4}$ and $s^{1/4}=d^{1/8}$, so the gain is $\Omega\bigl(d^{1/8}/\ln d\bigr)$.

\paragraph{Box.}
Let $\convexbody=\{x:|x_i|\le w_i\ \forall i\}$ with
$w_1\ge\dots\ge w_d\ge0$ and $w\ne0$, normalized so that $\|w\|_1\le1$.
We show
\[
    \gain(\convexbody,s)
    =\Omega\left(\frac{\min\bigl(\|w\|_2\sqrt s,\ \|w\|_1\bigr)}
    {\bigl(\|w_{\le s}\|_1+\sqrt s\,\|w_{>s}\|_2\bigr)\ln s}\right).
\]
\emph{Containment:} $\|x\|_1\le\|w\|_1\le1$.
\emph{Block norm:} the block norm is nondecreasing in coordinate magnitudes, so its maximum over the box is at the corner $w$; splitting $w$ at rank $n$ and using the triangle inequality (head in $L_1$) plus Cauchy--Schwarz over the at most $n$ nonempty blocks (tail in $L_2$),
\[
    \|w\|_{[n]}\;\le\;\|w_{\le n}\|_1+\sqrt n\,\|w_{>n}\|_2
    \;=:\;M(n).
\]
This bound holds for every partition, so $\rho_n(\convexbody)\le M(n)$ and $D_n=O(M(n)\sqrt{\ln n})$.
\emph{Witnesses ($0\in\convexbody$):} the corner $w$ serves for both: $R_\euc=\|w\|_2$ and $R_\ent=\|w\|_1$.
\emph{Gain:} plug into \cref{eqn: master-gain}.
\emph{Specialization ($w_1=\tfrac12d^{-1/4}$, $w_j=\tfrac1{2d}$ for $j\ge2$, $s=\sqrt d$):} $\|w\|_2\ge w_1=\tfrac12d^{-1/4}$ and $\|w\|_1\ge\tfrac{d-1}{2d}\ge\tfrac14$, so the numerator is at least $\tfrac14$; and $M(s)\le\bigl(\tfrac12d^{-1/4}+\tfrac{\sqrt d}{2d}\bigr) +d^{1/4}\cdot\tfrac{\sqrt d}{2d}\le\tfrac32d^{-1/4}$.
Hence the gain is $\Omega\bigl(d^{1/4}/\ln d\bigr)$.

\paragraph{Sum of $L_1$ and $L_\infty$ balls.}
Let
$\convexbody=\alpha_1 B_1+\alpha_\infty B_\infty$ with $\alpha_1,\alpha_\infty>0$, normalized so that $\alpha_1+\alpha_\infty d\le1$.
We show
\[
    \gain(\convexbody,s)
    =\Omega\left(\frac{\min\bigl((\alpha_1+\alpha_\infty\sqrt d)
        \sqrt s,\ \alpha_1+\alpha_\infty d\bigr)}
    {\bigl(\alpha_1+\alpha_\infty\sqrt{sd}\bigr)\ln s}\right).
\]
\emph{Containment:} for $x=u+u'$ with $\|u\|_1\le\alpha_1$, $\|u'\|_\infty\le\alpha_\infty$: $\|x\|_1\le\alpha_1+\alpha_\infty d\le1$.
\emph{Block norm:} by \cref{eqn: piece-norms} and the triangle inequality, $M(n)=\alpha_1+\alpha_\infty\sqrt{nd}$.
Thus $\rho_n(\convexbody)\le M(n)$ and $D_n=O((\alpha_1+\alpha_\infty\sqrt{nd})\sqrt{\ln n})$.
\emph{Witnesses ($0\in\convexbody$):} the single point $z:=\alpha_1\mathbf{e}_1+\alpha_\infty\mathbf1\in\convexbody$ serves for both: $\|z\|_2\ge\sqrt{\alpha_1^2+(d-1)\alpha_\infty^2} \ge\tfrac12\bigl(\alpha_1+\alpha_\infty\sqrt d\bigr)=:R_\euc$ (for $d\ge2$, using $\sqrt{a^2+b^2}\ge\tfrac{a+b}{\sqrt2}$ and $\sqrt{d-1}\ge\sqrt{d/2}$), and $\|z\|_1=\alpha_1+\alpha_\infty d=:R_\ent$.
\emph{Gain:} plug into \cref{eqn: master-gain}.
\emph{Specialization ($\alpha_1=\tfrac12d^{-1/4}$, $\alpha_\infty=\tfrac1{2d}$, $s=\sqrt d$):} $R_\euc\sqrt s\ge\tfrac12\alpha_1\sqrt s=\tfrac14$ and $R_\ent\ge\tfrac12$, while $M(s)=\tfrac12d^{-1/4}+\tfrac{d^{3/4}}{2d}=d^{-1/4}$; the gain is $\Omega\bigl(d^{1/4}/\ln d\bigr)$.

\paragraph{Sum of $L_1$ and $L_2$ balls.}
Let
$\convexbody=\alpha_1B_1+\alpha_2B_2$ with $\alpha_1,\alpha_2>0$, normalized so that $\alpha_1+\alpha_2\sqrt d\le1$.
We show
\[
    \gain(\convexbody,s)
    =\Omega\left(\frac{\min\bigl((\alpha_1+\alpha_2)\sqrt s,\
        \alpha_1+\alpha_2\sqrt d\bigr)}
    {\bigl(\alpha_1+\alpha_2\sqrt s\bigr)\ln s}\right).
\]
\emph{Containment:} for $x=u+u'$ with $\|u\|_1\le\alpha_1$, $\|u'\|_2\le\alpha_2$: $\|x\|_1\le\alpha_1+\alpha_2\sqrt d\le1$.
\emph{Block norm:} by \cref{eqn: piece-norms}, $M(n)=\alpha_1+\alpha_2\sqrt n$.
Consequently $\rho_n(\convexbody)\le M(n)$ and $D_n=O((\alpha_1+\alpha_2\sqrt n)\sqrt{\ln n})$.
\emph{Witnesses ($0\in\convexbody$):} the aligned point $(\alpha_1+\alpha_2)\mathbf{e}_1\in\convexbody$ gives $R_\euc=\alpha_1+\alpha_2$ (this is exact, by the triangle inequality); the point $\alpha_1\mathbf{e}_1+\tfrac{\alpha_2}{\sqrt d}\mathbf1$ gives $R_\ent=\alpha_1+\alpha_2\sqrt d$.
\emph{Gain:} plug into \cref{eqn: master-gain}.
\emph{Specialization ($\alpha_1=\tfrac12d^{-1/4}$, $\alpha_2=\tfrac12d^{-1/2}$, $s=\sqrt d$):} $R_\euc\sqrt s\ge\tfrac12$ and $R_\ent\ge\tfrac12$, while $M(s)=\tfrac12d^{-1/4}+\tfrac12d^{-1/2}\cdot d^{1/4}=d^{-1/4}$; the gain is $\Omega\bigl(d^{1/4}/\ln d\bigr)$.

\paragraph{Capsule (segment plus $L_2$ ball).}
Let
$\convexbody=\alpha_0[-\mathbf{e}_1,\mathbf{e}_1]+\alpha_2B_2$ with $\alpha_0,\alpha_2>0$, normalized so that $\alpha_0+\alpha_2\sqrt d\le1$.
A segment contributes to every quantity above exactly as an $L_1$ ball of the same radius: its block norm is $\alpha_0$ for every partition \cref{eqn: piece-norms}, and the witnesses $(\alpha_0+\alpha_2)\mathbf{e}_1$ and $\alpha_0\mathbf{e}_1+\tfrac{\alpha_2}{\sqrt d}\mathbf1$ lie in $\convexbody$.
In particular, $\rho_n(\convexbody)\le\alpha_0+\alpha_2\sqrt n$, and the same endpoint witnesses and their negatives give the required lower bounds.
Hence the previous paragraph applies verbatim with $\alpha_1$ replaced by $\alpha_0$:
\[
    \gain(\convexbody,s)
    =\Omega\left(\frac{\min\bigl((\alpha_0+\alpha_2)\sqrt s,\
        \alpha_0+\alpha_2\sqrt d\bigr)}
    {\bigl(\alpha_0+\alpha_2\sqrt s\bigr)\ln s}\right),
\]
which at $\alpha_0=\tfrac12d^{-1/4}$, $\alpha_2=\tfrac12d^{-1/2}$, $s=\sqrt d$ gives gain $\Omega\bigl(d^{1/4}/\ln d\bigr)$.

\section{Omitted proofs from \cref{sec: instances-with-realized-gains}}\label{sec: polynomial-regret-improvement-proofs}

In this section, we supply proofs omitted from \cref{sec: instances-with-realized-gains}.

First, we prove the regret upper bound $\E[\regret_s(T)] = O(\sqrt T\ln d)$ for the \gls{oco} instance in \cref{sec: polynomial-regret-improvement}.

Recall that $P=\conv(\simplex_d\cup\{A\mathbf1_d\})$, where $A=d^{-3/4}$ and $s=\lfloor\sqrt d\rfloor$.
Since any norm is convex, its maximum over $P$ occurs at a vertex.
For every partition into $s$ blocks,
\[
    \|\mathbf e_i\|_{[s]}=1,\qquad
    \|A\mathbf1_d\|_{[s]}=A\sum_{B\in\mathcal B}\sqrt{|B|}
    \le A\sqrt{sd}\le1.
\]
Thus $\rho_s(P)=1$, so the normalized mirror map is $h_s$ itself.
By \cref{lem: sparse-vector-block-variance-bound} and \cref{lem: block-norm-diameter-bound-general},
\[
    G_s=O(\sqrt{\ln s}),\qquad D_s=O(\sqrt{\ln s}).
\]
The regret is therefore
\[
    \E[\regret_s(T)]\le 2D_sG_s\sqrt T
    =O(\sqrt T\ln s)=O(\sqrt T\ln d).
\]

\subsection{Euclidean projection coordinate bound}
\label{app: polynomial-regret-euclidean-coordinate-bound}

Recall that $P=\conv(\simplex_d\cup\{A\mathbf1_d\})$, where
$A=d^{-3/4}$ and $d>1$, so $Ad>1$.
We show that, for $x\in P$, $c\in\{0,1\}^d$, and $\eta\ge0$,
the Euclidean projection $x'=\Pi_P(y)$ of $y=x+\eta c$ satisfies
\[
    x_i'\le\max\{A,y_i\}\qquad\text{for every }i\in[d].
\]

\begin{proof}
    Write $x'=A\alpha\mathbf1_d+\lambda$, where $\alpha,\lambda_i\ge0$ and $\alpha+\sum_i\lambda_i=1$.
    The projection minimizes $\tfrac12\|A\alpha\mathbf1_d+\lambda-y\|_2^2$ over these coefficients.
    Put $q_i:=x_i'-y_i$.
    The optimality conditions give a multiplier $\mu$ for the equality constraint such that
    \[
        q_i+\mu\ge0,\qquad \lambda_i(q_i+\mu)=0, \qquad A\sum_iq_i+\mu\ge0,\qquad \alpha\left(A\sum_iq_i+\mu\right)=0.
    \]
    We claim that $\mu\ge0$.
    Otherwise, $q_i\ge-\mu>0$ for every $i$, and
    \[
        A\sum_iq_i+\mu\ge(-\mu)(Ad-1)>0.
    \]
    Complementary slackness then forces $\alpha=0$, so $\sum_i x_i'=1$.
    But $x\in P$ implies $\sum_i x_i\ge1$, and $c\ge0$ implies $\sum_i y_i\ge1$.
    This contradicts $x_i'>y_i$ for every $i$, proving the claim.

    If $\lambda_i>0$, then $x_i'=y_i-\mu\le y_i$.
    If $\lambda_i=0$, then $x_i'=A\alpha\le A$.
    These two cases prove the desired bound.
\end{proof}

In particular, $c_1^{(t)}=1$ gives $x_1^{(t+1)}\le\max\{A,x_1^{(t)}+\eta\}$.
Starting from $x_1^{(1)}=A$, induction yields $x_1^{(t)}\le A+(t-1)\eta$, as used in \cref{sec: polynomial-regret-improvement}.

\subsection{Proof of \cref{lem: polynomial-lower-bound-entropic-coordinate-bound}}\label{app: mirror-maps-proof-of-polynomial-regret-coordinatewise-bound}

\begin{proof}
    Recall that $R=Ad=d^{1/4}$, $\hat P=P/R$, and $\var{z}{1}=\mathbf1_d/d$.
    First, we compute the step size.
    Since $B_{h_d}(z\|\var{z}{1})$ is convex in $z$, its maximum over $\hat P$ occurs at a vertex.
    By symmetry, using $p_d=1+1/\ln d$,
    \begin{align*}
        \hat D_d^2
        &=B_{h_d}\left(\frac{\mathbf e_1}{R}\middle\|\frac{\mathbf1_d}{d}\right)\\
        &=\frac{e\ln d}{p_d}R^{-p_d}+\frac1{p_d}-\frac{\ln d}{R}\\
        &=\frac1{p_d}+\frac{\ln d}{R}
        \left(\frac{eR^{-1/\ln d}}{p_d}-1\right)=\Theta(1).
    \end{align*}
    Here $R^{-1/\ln d}=e^{-1/4}$ and $(\ln d)/R=o(1)$.
    Since $D_\ent=R\hat D_d$ and $G_\ent=1$, the original step size satisfies $\eta=R\hat D_d/\sqrt T=\Theta(R/\sqrt T)$.
    The scaled step size is $\hat\eta=\eta/R^2$; denote $\eta_0:=R\hat\eta=\eta/R=\Theta(T^{-1/2})$.

    We will prove that
    \begin{equation}\label{eqn: polynomial-regret-scaled-iterate-bound}
        \var{z}{t}_1\le\frac1d+\frac{\eta_0}{R^2}(t-1).
    \end{equation}
    Every point $z\in\hat P$ has a unique representation
    \[
        z=b\mathbf1_d+\frac{\lambda}{R},\qquad
        b,\lambda_i\ge0,\qquad db+\sum_i\lambda_i=1.
    \]
    In particular, $0\le b\le1/d$ and
    \[
        \sum_i z_i=\frac1R+d\left(1-\frac1R\right)b.
    \]
    Write $\psi(u):=e\ln d\,u^{1/\ln d}$, so that $(\nabla h_d(z))_i=\psi(z_i)$.
    Consider one OMD step
    \[
        z'=\arg\min_{w\in\hat P}
        \big\{B_{h_d}(w\|z)-\eta_0\langle c,w\rangle\big\},
        \qquad c\in\{0,1\}^d,
    \]
    and denote its coefficients by $b',\lambda'$.
    Put $q_i:=\psi(z_i')-\psi(z_i)-\eta_0c_i$.
    The optimality conditions in $b',\lambda'$ give a multiplier $\mu$ satisfying
    \begin{equation}\label{eqn: polynomial-regret-projection-optimality}
        q_i=-\mu\ \text{if }\lambda_i'>0,\qquad
        q_i\ge-\mu\ \text{if }\lambda_i'=0,\qquad
        \sum_iq_i\ge-\frac{d\mu}{R},
    \end{equation}
    where the last inequality is an equality if $b'>0$.
    Indeed, writing the multiplier of $db'+\sum_i\lambda_i'=1$ as $\mu/R$, stationarity gives $q_i/R+\mu/R\ge0$ and $\sum_iq_i+d\mu/R\ge0$, with equality for positive $\lambda_i'$ and $b'$, respectively.

    Suppose $b'<b$, and denote $\delta:=\psi(b)-\psi(b')>0$.
    Some $\lambda_i'$ must be zero: otherwise $q_i=-\mu$ for all $i$, and \cref{eqn: polynomial-regret-projection-optimality} implies $\mu\le0$ since $R>1$.
    This would give $\psi(z_i')-\psi(z_i)=\eta_0c_i-\mu\ge0$ for every $i$, contradicting $\sum_i z_i'<\sum_i z_i$.

    For every coordinate with $\lambda_i'=0$, we have $z_i'=b'$ and $z_i\ge b$, so $q_i\le-\delta$.
    Since also $q_i\ge-\mu$, this implies $\mu\ge\delta$.
    Hence $q_i\le-\delta$ on every coordinate, including those with $\lambda_i'>0$.
    Summing and using \cref{eqn: polynomial-regret-projection-optimality},
    \[
        -\frac{d\mu}{R}\le\sum_iq_i\le-d\delta,
        \qquad\text{so}\qquad\delta\le\frac\mu R.
    \]
    Further, $\mu>\eta_0$ would imply $z_i'<z_i$ on all coordinates: on free coordinates this follows from $\psi(z_i')-\psi(z_i)=\eta_0c_i-\mu<0$, and on the others from $z_i'=b'<b\le z_i$.
    Since $c\ge0$, this cannot improve the linear loss and has strictly positive Bregman divergence, contradicting optimality relative to staying at $z$.
    Thus $\mu\le\eta_0$, proving
    \[
        \psi(b)-\psi(b')\le\frac{\eta_0}{R}.
    \]
    This inequality also holds when $b'\ge b$.
    Therefore, writing $b^{(t)}$ for the coefficient of $\var{z}{t}$ and using $b^{(1)}=1/d$, telescoping gives
    \[
        \psi(1/d)-\psi(b^{(t)})\le\frac{(t-1)\eta_0}{R}.
    \]
    Since $\psi'(u)=e u^{1/\ln d-1}\ge d$ for $0<u\le1/d$ and $d\ge3$, it follows that
    \[
        1-db^{(t)}\le\frac{(t-1)\eta_0}{R}.
    \]
    Finally,
    \[
        R\var{z}{t}_1=Rb^{(t)}+\lambda_1^{(t)}
        \le A+(1-db^{(t)})
        \le A+\frac{(t-1)\eta_0}{R}.
    \]
    This proves \cref{eqn: polynomial-regret-scaled-iterate-bound} and, since $\var{x}{t}=R\var{z}{t}$ and $\eta_0=\eta/R$, the claimed bound.
\end{proof}

\section{Algorithm \texttt{MirrorWeights} for learning mirror map online}\label{app: mwu-proof}

Here, we present \cref{alg: mirror-weights}, which, given an OCO instance and a set of mirror maps, learns the optimal mirror map from this set online.
We then prove \cref{thm: mirror-weights} that formalizes this guarantee for the algorithm.
The proof uses standard arguments for the Hedge algorithm \cite{hazan_introduction_2016}; we present it here for completeness.
We restate the result here:
\combiningMirrorMaps*

\begin{algorithm}[t]
    \caption{\texttt{MirrorWeights}}\label{alg: mirror-weights}
    \KwIn{convex body $\mathcal{K} \subseteq \R^d$, time horizon $T$, starting point $x^{(1)} \in \mathcal{K}$, strongly convex distance generating functions $m_1, \ldots, m_N$, upper bound $\rho \ge \max_{x, z \in \mathcal{K}, t \in [T]} f^{(t)}(x) - f^{(t)}(z)$ on the differential in loss function values}
    \let\oldnl\nl
    \newcommand{\nonl}{\renewcommand{\nl}{\let\nl\oldnl}}
    \nonl \textbf{Parameters}: step sizes $\{\eta^{(t)}_\ell, \ell \in [N], t \in [T]\}$, and learning rate $\varepsilon > 0$
    initialize $X \in \R^{d \times N}$ with all columns $X_\ell \gets x^{(1)}$\;
    initialize probabilities $p_\ell \gets \tfrac{1}{N}$ for all $\ell \in [N]$\;
    \For{$t = 1, \ldots, T$}{
        play $x^{(t)} = \sum_{\ell=1}^N p_\ell X_\ell$ and observe loss $f^{(t)}$\;
        \For{$\ell = 1, \ldots, N$}{
            $p_\ell \gets p_\ell \cdot \exp\left(- \frac{\varepsilon f^{(t)}(X_\ell)}{\rho}\right)$\;
            $X_\ell \gets \texttt{MirrorDescentStep}(X_\ell, \mathcal{K}, m_\ell, \nabla f^{(t)}, \eta^{(t)}_\ell)$\;
        }
        normalize $p \gets p / \|p\|_1$\;
    }
\end{algorithm}

\begin{proof}
    Let $\ell^* = {\arg\min}_{\ell \in [N]} \sum_{t \in [T]} f^{(t)}(X_\ell^{(t)})$, and denote $X_{\ell^*}^{(t)} = X_*^{(t)}$ for all $t$.
    Define \emph{weights} $w_{\ell}^{(t)}$ inductively as follows: $w_{\ell}^{(1)} = \frac{1}{N}$ for all $\ell \in [N]$, and $w_{\ell}^{(t + 1)} = w_\ell^{(t)} \exp\left( - \frac{\varepsilon (f^{(t)}(X_\ell^{(t)}) - f^{(t)}(X^{(t)}_*))}{\rho}\right)$ for all $t$, where $X_\ell^{(t)}$ is the iterate of the $\ell$th mirror map at time $t$.

    Define potential at time $t$ as $\phi^{(t)} = \sum_{\ell \in [N]} w_\ell^{(t)} = \|w^{(t)}\|_1$.
    Note that $p_{\ell}^{(t)} \propto w_\ell^{(t)}$ and therefore $p_{\ell}^{(t)} = \frac{w_\ell^{(t)}}{\|w^{(t)}\|_1} = \frac{w_\ell^{(t)}}{\phi^{(t)}}$.
    Then,
    \begin{align*}
        \phi^{(t + 1)} &= \sum_{\ell \in [N]} w_\ell^{(t + 1)} = \sum_{\ell \in [N]} w_\ell^{(t)} \exp\left( - \frac{\varepsilon (f^{(t)}(X_\ell^{(t)}) - f^{(t)}(X^{(t)}_*))}{\rho}\right).
    \end{align*}
    Further, $|f^{(t)}(X_\ell^{(t)}) - f^{(t)}(X^{(t)}_*)| \le \rho$ by definition and $\varepsilon \in [0, 1]$.
    Since $\exp(u) \le 1 + u + u^2$ for all $u \in [-1, 1]$, we get
    \begin{align*}
        \phi^{(t + 1)} &\le \sum_{\ell \in [N]} w_\ell^{(t)} \left(1 - \frac{\varepsilon (f^{(t)}(X_\ell^{(t)}) - f^{(t)}(X^{(t)}_*))}{\rho} + \frac{\varepsilon^2(f^{(t)}(X_\ell^{(t)}) - f^{(t)}(X^{(t)}_*))^2}{\rho^2}\right) \\
        &\le \sum_{\ell \in [N]} w_\ell^{(t)} \left(1 - \frac{\varepsilon (f^{(t)}(X_\ell^{(t)}) - f^{(t)}(X^{(t)}_*))}{\rho} + \varepsilon^2\right) \\
        &= (1 + \varepsilon^2) \phi^{(t)} - \frac{\varepsilon}{\rho} \sum_{\ell} w_\ell^{(t)} (f^{(t)}(X_\ell^{(t)}) - f^{(t)}(X^{(t)}_*)) \\
        &= (1 + \varepsilon^2) \phi^{(t)} - \frac{\varepsilon \phi^{(t)}}{\rho} \sum_{\ell} p_\ell^{(t)} (f^{(t)}(X_\ell^{(t)}) - f^{(t)}(X^{(t)}_*)).
    \end{align*}
    Recall that the algorithm plays $x^{(t)} = \sum_{\ell} p_\ell^{(t)} X_\ell^{(t)}$.
    Since $f^{(t)}$ is convex, $f^{(t)}(x^{(t)}) \le \sum_{\ell} p_\ell^{(t)} f^{(t)}(X_\ell^{(t)})$.
    Therefore,
    \begin{align*}
        \phi^{(t + 1)} &\le \phi^{(t)} \left((1 + \varepsilon^2) - \frac{\varepsilon(f^{(t)}(x^{(t)}) - f^{(t)}(X_*^{(t)}))}{\rho}\right).
    \end{align*}
    Using $1 + u \le \exp(u)$ for all $u \in \R$,
    \begin{align*}
        \phi^{(t + 1)} &\le \phi^{(t)} \exp\left(\varepsilon^2 - \frac{\varepsilon(f^{(t)}(x^{(t)}) - f^{(t)}(X_*^{(t)}))}{\rho}\right).
    \end{align*}
    Therefore,
    \begin{align*}
        \frac{\phi^{(T + 1)}}{\phi^{(1)}} &\le \exp\left(T\varepsilon^2 - \frac{\varepsilon \sum_{t \in [T]} (f^{(t)}(x^{(t)}) - f^{(t)}(X_*^{(t)}))}{\rho}\right) \\
        &= \exp\left(T\varepsilon^2 - \frac{\varepsilon(\regret(T) - \regret_*(T))}{\rho}\right).
    \end{align*}
    Note that $\phi^{(1)} = 1$ and $\phi^{(T + 1)} \ge w_{*}^{(T + 1)} = w_*^{(1)} = \frac{1}{N}$, so that $- \ln N \le T \varepsilon^2 - \frac{\varepsilon(\regret(T) - \regret_*(T))}{\rho}$.
    Rearranging,
    \[
        \regret(T) \le \regret_*(T) + \rho\left(\frac{\ln N}{\varepsilon} + T \varepsilon\right).
    \]
    By assumption, $T \ge \ln N$.
    Choose $\varepsilon = \sqrt{\frac{\ln N}{T}} \le 1$ to get
    \[
        \regret(T) \le \regret_*(T) + 2 \rho \sqrt{\ln N} \sqrt{T}.
    \]
\end{proof}

\section{Numerical experiment}\label{sec: experiment-details}

We construct a small numerical example to further illustrate improvements in regret using block norms.
See \cref{fig: simplex-regret-vs-block-norms-rate-agnostic} for the regret attained by \gls{omd} with different $n$th block norms for $n \in \{2^0, 2^1, \ldots, 2^{12}\}$, over the 4096-dimensional probability simplex, with the uniform starting point $(1/d) \mathbf{1}_d$.
The time horizon $T = 250$, and for each $t \in [T]$, the loss function $f^{(t)}(x) = - \langle c^{(t)}, x \rangle$ where $c^{(t)} \in \{0, 1\}^d$ is an $s$-sparse vector where $s = \lfloor \ln d \rfloor$.
The choice of vectors $c^{(t)}$ is described next:

At each time step $t \in [T]$, we choose a special coordinate $i(t) \in [d]$ such that $c^{(t)}_{i(t)} = 1$ and the remaining $s - 1$ non-zero coordinates of $c^{(t)}$ are chosen uniformly at random from among the remaining $d - 1$ coordinates.
$i(t)$ is chosen as follows: fix $T_0 = \lfloor 2 \sqrt{T} \rfloor$.
If $t \le T_0$, then $i(t)$ alternates between $1$ and $2$, and for $t > T_0$, $i(t)$ alternates between $3$ and $4$.
That is,
\begin{equation*}
    i(t) =
    \begin{cases}
        1 & \text{if} \ t \le T_0 \ \text{and} \ t \ \text{is odd}, \\
        2 & \text{if} \ t \le T_0 \ \text{and} \ t \ \text{is even}, \\
        3 & \text{if} \ t > T_0 \ \text{and} \ t \ \text{is odd}, \\
        4 & \text{if} \ t > T_0 \ \text{and} \ t \ \text{is even}.
    \end{cases}
\end{equation*}

As \cref{fig: simplex-regret-vs-block-norms-rate-agnostic} shows, using $n = 16$ blocks leads to a regret of $< 12.5$, as opposed to a regret of $> 21$ for either \gls{opgd} or \gls{oeg}.
This is in line with what is expected from our theoretical results and shows the usefulness of block norms in certain sparsity regimes of the loss functions, averaged across 10 independent runs of the experiment.

The code for this experiment is available at \url{https://github.com/jaimoondra/block-norm-omd}.

\end{document}